\pdfoutput=1
\documentclass[11pt]{article}

\usepackage[margin=1in]{geometry}
\usepackage[T1]{fontenc}
\usepackage[utf8]{inputenc}
\usepackage{times}

\usepackage{amsmath,amssymb,amsfonts,amsopn,mathtools}
\usepackage{amsthm}
\usepackage{bm}
\usepackage{dsfont}
\usepackage[nocomma]{optidef}
\usepackage{commath}

\usepackage{graphicx}
\usepackage{epsfig}
\usepackage{epstopdf}
\usepackage{float}
\usepackage{subcaption}
\usepackage{stfloats}

\usepackage{algorithm}
\usepackage{algorithmic}

\usepackage{enumitem}
\usepackage{xcolor}
\usepackage{cite}
\usepackage{xurl}
\usepackage[hidelinks]{hyperref}
\usepackage[capitalise,nameinlink]{cleveref}

\DeclareGraphicsExtensions{.pdf,.png,.jpg,.eps}

\makeatletter
\def\BState{\State\hskip-\ALG@thistlm}
\makeatother

\DeclareMathOperator*{\argmin}{arg\,min}

\newcommand{\M}{\mathcal{M}}

\newcommand{\E}{\mathbb{E}}
\newcommand{\grad}{\operatorname{grad}}

\theoremstyle{plain}
\newtheorem{theorem}{Theorem}[section]
\newtheorem{lemma}[theorem]{Lemma}
\newtheorem{corollary}[theorem]{Corollary}
\newtheorem{proposition}[theorem]{Proposition}

\theoremstyle{definition}
\newtheorem{definition}[theorem]{Definition}
\newtheorem{assumption}{Assumption}

\theoremstyle{remark}
\newtheorem{remark}[theorem]{Remark}

\crefname{theorem}{Theorem}{Theorems}
\crefname{lemma}{Lemma}{Lemmas}
\crefname{corollary}{Corollary}{Corollaries}
\crefname{proposition}{Proposition}{Propositions}
\crefname{definition}{Definition}{Definitions}
\crefname{assumption}{Assumption}{Assumptions}
\crefname{hypothesis}{Hypothesis}{Hypotheses}
\crefname{remark}{Remark}{Remarks}
\crefname{claim}{Claim}{Claims}
\crefname{fact}{Fact}{Facts}
\crefname{algorithm}{Algorithm}{Algorithms}

\title{Proximal DCA for Fr\'echet Regression on Riemannian Manifolds with Bounded Curvature%
\thanks{\textbf{Funding:} This work was funded by the National Science Foundation, Grant \#2443064.}}

\author{%
Yamin Zhou \quad C\'esar A. Uribe\\[0.4em]
\small Department of Electrical and Computer Engineering and Ken Kennedy Institute\\
\small Rice University, Houston, TX\\[0.2em]
\small \texttt{yz282@rice.edu} \quad \texttt{cauribe@rice.edu}%
}

\date{}

\begin{document}

\maketitle

\begin{abstract}
Fr\'echet regression generalizes linear regression to metric-space-valued
responses by defining fitted values as minimizers of weighted Fr\'echet
functionals. Since these weights may have mixed signs, the resulting objective
is a signed barycenter problem rather than a standard convex barycenter
problem. On Riemannian manifolds, this is further complicated by the lack of
global geodesic convexity and possible nonsmoothness of squared distances near
cut loci. We study signed Fr\'echet regression on complete manifolds with
two-sided bounded sectional curvature. By restricting optimization to a
strongly convex normal ball containing the response support, we use local
smoothness, Hessian comparison, and Jacobi-field estimates to formulate the
problem as a locally controlled Riemannian proximal DC problem. This leads to
FRIDA (Fr\'echet Regression via Riemannian Iterative DC Algorithm), an exact and inexact proximal DC algorithm for computing regression
fits. We prove existence and interiority of minimizers under explicit
signed-weight conditions, establish curvature-dependent strong convexity of
the proximal subproblems, and show descent and convergence of the iterates to
stationary points. We also derive sublinear complexity estimates and, under
real-analyticity, obtain full-sequence convergence with KL-type local rates.
These results provide a rigorous optimization framework for signed Fr\'echet
regression on manifolds with bounded curvature.
\end{abstract}

\section{Introduction}

Fr\'echet regression extends classical regression to metric-space-valued
responses by defining fitted values through weighted Fr\'echet minimization.
The global model of Petersen and M\"uller \cite{petersen2019frechet} gives a
conditional Fr\'echet mean at each query point \(x\), and has motivated
extensions to network-valued responses, total-variation regularization,
manifold-valued curve regression, and non-Euclidean predictors
\cite{zhou2022network,lin2021total,torres2022multivariate,nava2024ridge,im2025local}.
Most existing work emphasizes statistical theory, such as consistency,
convergence rates, and model extensions, while the computation of Fr\'echet
regression fits remain comparatively underdeveloped, especially on curved
response spaces where the objective is not geodesically convex.

A key feature of global Fr\'echet regression is that its weights may have mixed
signs. Given observations \(\{(x_i,y_i)\}_{i=1}^m\), the fitted value at a query
point \(x \in \mathbb{R}^q\) is obtained from
\begin{align}\label{eq:main}
   \min_{y \in \mathcal{M}_{\mathrm{ex}}}
   f(y,x) \triangleq \sum_{i=1}^m w_i(x)d^2(y,y_i),
   \qquad
   \sum_{i=1}^m w_i(x)=1,\quad x\in\mathbb{R}^q .
\end{align}
Here \(\mathcal{M}_{\mathrm{ex}}\) is the subset of the manifold on which we
establish existence, and some weights \(w_i(x)\) may be negative. Thus, the
objective is an affine, rather than a convex combination of squared distances.
This signed structure enables extrapolation in predictor space and distinguishes
Fr\'echet regression from ordinary barycenter estimation, but it also changes the
optimization problem: minimizers may fail to exist or be unique, and standard barycenter convexity arguments no longer apply.

\begin{figure}[t!]
    \centering

    \begin{subfigure}[b]{0.24\textwidth}
        \centering
        \includegraphics[
            width=\linewidth,
            trim={7.3cm 7.3cm 6.5cm 4.2cm},
            clip
        ]{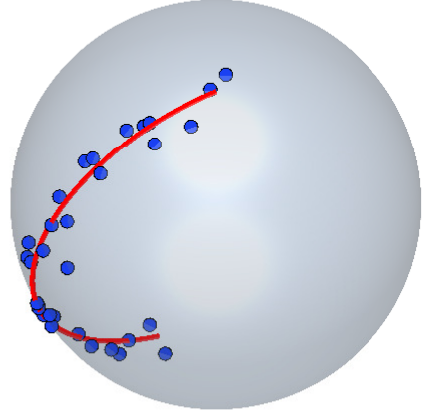}
        \caption{Sphere}
    \end{subfigure}
    \hfill
    \begin{subfigure}[b]{0.24\textwidth}
        \centering
        \includegraphics[
            width=\linewidth,
            trim={6.3cm 7.3cm 5.5cm 4.2cm},
            clip
        ]{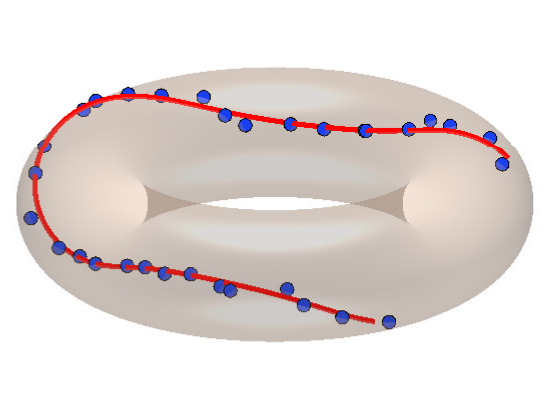}
        \caption{Torus}
    \end{subfigure}
    \hfill
    \begin{subfigure}[b]{0.24\textwidth}
        \centering
        \includegraphics[
            width=\linewidth,
            trim={7.3cm 6.3cm 6.5cm 3.0cm},
            clip
        ]{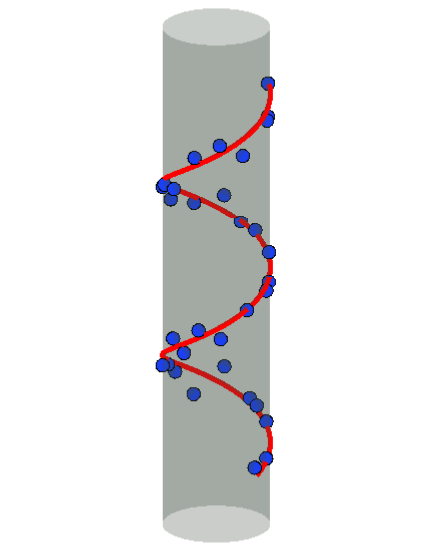}
        \caption{Cylinder}
    \end{subfigure}
    \hfill
    \begin{subfigure}[b]{0.24\textwidth}
        \centering
        \includegraphics[
            width=\linewidth,
            trim={5.3cm 7.3cm 5.5cm 3.2cm},
            clip
        ]{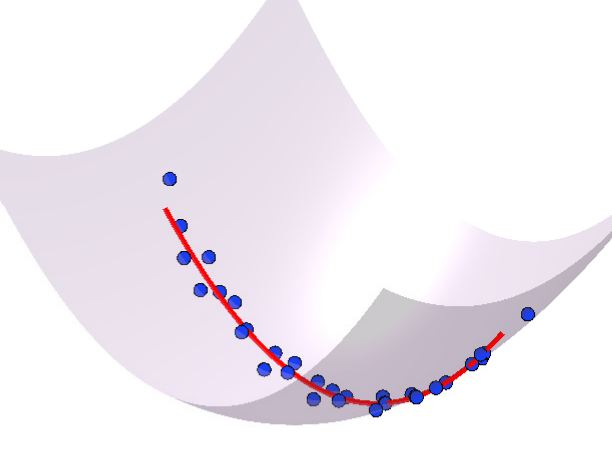}
        \caption{Elliptic paraboloid}
    \end{subfigure}
    \caption{Illustration of Fr\'echet regression on four manifolds: the sphere, torus, cylinder, and elliptic paraboloid. In each panel, the surface (i.e., the manifold) is shown along with a geodesic regressor curve and noisy observations constrained to it. \vspace{-0.6cm}}
    \label{fig:fourmanifolds}
\end{figure}

In Euclidean spaces, signed weighted barycenters have a natural difference-of-convex (DC) structure, linking them to DC programming and DCA literature~\cite{tuy1984global,hiriart1985generalized,horst1999dc,le2018dc,faust2023bregman,yao2023globally,weber2022class}. On Hadamard manifolds, global geodesic convexity of squared distances enabled the authors in~\cite{bergmann2024difference} to develop a Riemannian DCA with iterate convergence to critical points. These results, however, do not cover the positively or nonnegatively curved settings common in Fr\'echet regression. Figure~\ref{fig:fourmanifolds} illustrates representative manifolds where responses follow a geodesic trajectory with noisy observations constrained to the manifold.

This obstruction is not pathological: it already occurs on spheres, where
squared distances are smooth only away from the cut locus and are not globally
geodesically convex; related center-of-mass and gradient-descent issues on
manifolds of constant nonnegative curvature were studied in
\cite{afsari2013convergence}. Similar difficulties arise on other
nonnegatively curved manifolds, such as \(SO(3)\) and real projective spaces
\cite{ziller2014riemannian}. They also appear in optimal transport: the
Bures--Wasserstein manifold of nondegenerate Gaussian covariances has
nonnegative sectional curvature \cite{takatsu2011wasserstein}, yet
squared-distance averaging objectives on it are geodesically nonconvex
\cite{altschuler2021averaging}. Thus, nonnegative curvature naturally yields
regression and barycenter objectives that are smooth on a suitable normal
neighborhoods but lose geodesic convexity beyond a curvature-dependent scale.

This paper develops an optimization framework for signed Fr\'echet regression on complete Riemannian manifolds with controlled sectional curvature. Instead of optimizing over the full manifold, we work on a strongly convex normal ball \(\mathcal M\) containing the response support. On this safe set, logarithm maps are single-valued, squared distances are smooth, Hessian comparison gives local second-order bounds, and Jacobi-field estimates control the linearization of the concave part. We then decompose the signed objective as \(f(y,x)=g(y)-h(y)\) (dependency on $x$ is implicit), where \(g\) and \(h\) collect the positive- and negative-weight terms, and develop FRIDA, a Riemannian proximal DC method adapted to this local geometry.

Our contributions are fourfold. First, we prove the existence and interiority of stationary points for the signed Fr\'echet regression objective on the safe set. We give explicit conditions ensuring that the total negative weight is small enough for the objective to attain a minimum on \(\mathcal M\), with every minimizer lying in \(\operatorname{int}(\mathcal M)\) and hence stationary. We also translate these conditions into guarantees in predictor space, including finite-sample ellipsoidal sufficient conditions.

Second, we derive curvature-dependent estimates for the proposed proximal DC method under two-sided sectional-curvature bounds. The curvature upper bound controls the lower bounds on the Hessian for squared distances and hence the convexity radius, while the curvature lower bound controls the Jacobi-field growth and the logarithm-Hessian smoothness constants. Together, these estimates yield computable lower bounds that ensure strong geodesic convexity of the proximal DC subproblems on a subset of the manifold.

Third, we propose exact and inexact versions of FRIDA using these estimates. The proximal DC subproblems are solved directly on the manifold, preserving feasibility by construction. We prove objective descent, boundedness of the iterates in the safe set, and stationarity of every accumulation point for the original signed Fr\'echet regression objective. We also derive explicit \(O((N+1)^{-1/2})\) sublinear complexity bounds for the smallest successive-step distance among the first \(N+1\) iterations.

Fourth, under an additional real-analyticity assumption on the manifold, we strengthen the convergence analysis via the Kurdyka--\L ojasiewicz framework. In that case, the whole sequence converges to a single stationary point, and the local rate is determined by the KL exponent: finite termination when the exponent is zero, linear convergence for exponents in \((0,1/2]\), and sublinear convergence otherwise.

The paper contributes to the intersection of Fr\'echet regression and Riemannian optimization. It provides an algorithmic foundation for evaluating global Fr\'echet regression with signed weights on curved response spaces, and develops FRIDA as a proximal DC framework for signed barycenter problems outside the Hadamard setting, where explicitly local geometric control is
required.

The rest of the paper is organized as follows. Section~\ref{sec:background} presents preliminary background and notation. Section~\ref{sec:problem formulation} formulates the signed Fr\'echet regression problem and introduces the safe-set geometry. Section~\ref{sec:existence} establishes existence and interiority of minimizers. Section ~\ref{sec:converg analysis} shows the convergence analysis for the main convergence guarantees of exact and inexact FRIDA. Improved KL-based rates are shown in Section~\ref{subsec:analytic_rates}. Numerical analysis is shown in Section~\ref{sec:numerics}.

\section{Background and notation}\label{sec:background}

We work on a complete Riemannian manifold~\((\Omega,d)\) with geodesic distance \(d\). For \(x\in\Omega\), \(T_x\Omega\) denotes the tangent space, equipped with inner product \(\langle\cdot,\cdot\rangle_x\) and norm \(\|\xi\|_x=\sqrt{\langle \xi,\xi\rangle_x}\). In common normal neighborhoods, we use the exponential map \(\exp_x\), its inverse
\(\log_x\), and parallel transport \(P_{x\to y}\).
\begin{definition}[Eq.(1.3)--(1.6) in \cite{viaclovsky_pcmi_curvature}]\label{def:curvature_tensor}
Let \(R\) denote the Riemann curvature tensor on \(\Omega\), with convention $R(X,Y)Z=\nabla_X\nabla_Y Z-\nabla_Y\nabla_X Z-\nabla_{[X,Y]}Z$, and let \(\nabla R\) denote its covariant derivative.
\end{definition}

For a \(C^1\) function \(f:\Omega\to\mathbb R\), the Riemannian gradient \(\grad f(x)\in T_x\Omega\) is defined by 
$$Df(x)[\xi]=\langle \grad f(x),\xi\rangle_x,$$ for all $\xi\in T_x\Omega$. Whenever \(z\) and \(y\) lie in a common normal neighborhood, $\grad_y (1/2) d^2(z,y)=-\log_y(z)$.

\begin{definition}Let \(\mathcal X\subset \Omega\) be geodesically convex and let \(f:\mathcal X\to\mathbb R\) be \(C^1\).
We say that \(f\) is \(\mu\)-strongly geodesically convex on \(\mathcal X\) if
\[
f(y)\ge f(x)+\langle \grad f(x),\log_x(y)\rangle_x+\frac{\mu}{2}d^2(x,y),
\qquad \forall x,y\in\mathcal X.
\]
We say that \(f\) is \(L\)-smooth on \(\mathcal X\) if $
\|\grad f(y){-}P_{x\to y}\grad f(x)\|_y{\le} L\,d(x,y)$, $\forall$ $x,y\in\mathcal X$.
\end{definition}

% A standard consequence of \(L\)-smoothness is the descent estimate
% \[
% f(\exp_x(-t\grad f(x)))
% \le
% f(x)-\big(t-({L}/{2})t^2\big)\|\grad f(x)\|_x^2
% \]
% for every \(t\ge0\) such that \(\exp_x(-t\grad f(x))\in\mathcal X\)~\cite[Corollary~2.1]{bento2016iteration}.

\begin{definition}\label{def:comparison_constants_short}
Let \(\mathcal X\subset\Omega\) be geodesically convex with
\(\operatorname{diam}(\mathcal X)\le D\), and assume
\(-\Lambda_- \leq \sec_\Omega \leq \Lambda_+\) on \(\mathcal X\), where
\(\Lambda_-,\Lambda_+\ge 0\). Define
$\delta_D\triangleq\delta_+(D), \zeta_D\triangleq\zeta_-(D)$
\[
\delta_+(t)\triangleq
\begin{cases}
t\sqrt{\Lambda_+}\cot(t\sqrt{\Lambda_+}), & \Lambda_+>0,\\
1, & \Lambda_+=0,
\end{cases}
\qquad
\zeta_-(t)\triangleq
\begin{cases}
t\sqrt{\Lambda_-}\coth(t\sqrt{\Lambda_-}), & \Lambda_->0,\\
1, & \Lambda_-=0.
\end{cases}
\]
Also, define the logarithm-Hessian factors as
\[
\textstyle
\alpha_+(t)\triangleq\frac{t\sqrt{\Lambda_+}}{\sin(t\sqrt{\Lambda_+})},\quad
b_-(t)\triangleq\frac{\sinh(t\sqrt{\Lambda_-})}{t\sqrt{\Lambda_-}},\quad
c_-(t)\triangleq\cosh(t\sqrt{\Lambda_-}),
\]
with limiting value \(1\) when \(\Lambda_\pm=0\); here
\(\delta_+,\zeta_-\) are the squared-distance Hessian-comparison constants.
\end{definition}

\begin{lemma}[Lemma 23 in~\cite{martinez2024convergence}]\label{lem:sqdist_curv}
Let \(\mathcal X\subset \Omega\) be geodesically convex with
\(\operatorname{diam}(\mathcal X)\le D\), and assume
$-\Lambda_- \le \sec_\Omega \le \Lambda_+$ on $\mathcal X$. 
If \(\Lambda_+>0\), assume also \(D<\pi/(2\sqrt{\Lambda_+})\).
Then, for all \(z,y\in\mathcal X\) and all \(v\in T_y\Omega\),
\[
\delta_D \|v\|_y^2
\le
\operatorname{Hess}_y\!\left(\frac12 d^2(z,y)\right)[v,v]
\le
\zeta_D \|v\|_y^2 .
\]
Consequently, \(y\mapsto \frac12 d^2(z,y)\) is
\(\delta_D\)-strongly convex and \(\zeta_D\)-smooth on \(\mathcal X\).
\end{lemma}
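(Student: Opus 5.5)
The plan is to prove the two-sided Hessian bound pointwise via the standard Jacobi-field/Riccati comparison for the distance function, and then integrate along geodesics to get strong convexity and smoothness. Fix \(z,y\in\mathcal X\) and set \(r=d(z,y)\le D\). If \(r=0\), then \(\operatorname{Hess}_y\frac12 d^2(z,\cdot)=\mathrm{Id}\), and both bounds hold since \(\delta_D\le1\le\zeta_D\). For \(r>0\), geodesic convexity of \(\mathcal X\) together with \(D<\pi/(2\sqrt{\Lambda_+})<\pi/\sqrt{\Lambda_+}\) means the minimizing geodesic \(\gamma\) from \(z\) to \(y\) has no conjugate points. We also need \(y\) to lie off the cut locus of \(z\). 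We get this from the (implicit) standing assumption that \(\mathcal X\) lies in a normal neighbourhood, as in the paper's safe set, so that \(\log\) is single-valued and \(d^2(z,\cdot)\) is smooth near \(y\).

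\textbf{Splitting the Hessian.} Write \(\frac12 d^2=\frac12\rho^2\) with \(\rho=d(z,\cdot)\). Then
\[
\operatorname{Hess}\tfrac12\rho^2=d\rho\otimes d\rho+\rho\operatorname{Hess}\rho .
\]
Decompose \(v=a\dot\gamma(r)+v^\perp\). Since \(\operatorname{Hess}\rho[\dot\gamma,\cdot]=0\), the radial direction contributes exactly \(a^2\), which lies between \(\delta_D a^2\) and \(\zeta_D a^2\). For the orthogonal part, \(\operatorname{Hess}\rho[v^\perp,v^\perp]=\langle J'(r),J(r)\rangle\), where \(J\) is the Jacobi field along \(\gamma\) with \(J(0)=0\) and \(J(r)=v^\perp\). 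This is the index form of \(J\).

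\textbf{Comparison step.} Apply the Rauch/Hessian comparison theorem under \(\sec\le\Lambda_+\) and \(\sec\ge-\Lambda_-\). It gives
\[
\sqrt{\Lambda_+}\cot(r\sqrt{\Lambda_+})\,|v^\perp|^2\le \operatorname{Hess}\rho[v^\perp,v^\perp]\le \sqrt{\Lambda_-}\coth(r\sqrt{\Lambda_-})\,|v^\perp|^2 .
\]
Multiplying by \(r\) yields \(\delta_+(r)|v^\perp|^2\le r\operatorname{Hess}\rho[v^\perp,v^\perp]\le \zeta_-(r)|v^\perp|^2\). The upper bound follows by comparing index forms: the Jacobi field minimizes the index form, and we test against the transported model field \(\tfrac{\sinh(t\sqrt{\Lambda_-})}{\sinh(r\sqrt{\Lambda_-})}E(t)\). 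The lower bound uses the Riccati comparison for the shape operator of the distance spheres, valid before the first conjugate time \(\pi/\sqrt{\Lambda_+}\). The functions \(t\mapsto t\sqrt{\Lambda}\cot(t\sqrt{\Lambda})\) and \(t\mapsto t\sqrt{\Lambda}\coth(t\sqrt{\Lambda})\) are respectively decreasing and increasing, so \(r\le D\) gives \(\delta_+(r)\ge\delta_D\) and \(\zeta_-(r)\le\zeta_D\). Adding the radial and orthogonal contributions, with no cross terms because the Hessian is block-diagonal in this decomposition, gives the claimed two-sided bound.

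\textbf{Consequences.} Take \(x,y\in\mathcal X\) and the geodesic \(c(t)=\exp_x(t\log_x y)\), which stays in \(\mathcal X\). Let \(\phi(t)=\frac12 d^2(z,c(t))\). Then \(\phi''(t)=\operatorname{Hess}[\dot c,\dot c]\ge\delta_D d^2(x,y)\), and Taylor's formula gives the \(\delta_D\)-strong convexity inequality. The hypothesis \(D<\pi/(2\sqrt{\Lambda_+})\) guarantees \(\delta_D>0\). For smoothness, differentiate \(t\mapsto P_{c(t)\to x}\operatorname{grad}\phi(c(t))\). Its derivative is \(P\,\operatorname{Hess}[\dot c]\), whose norm is at most \(\zeta_D|\dot c|\) because the Hessian is symmetric with spectrum in \([\delta_D,\zeta_D]\) and \(|\delta_D|\le\zeta_D\). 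Integrating gives the \(\zeta_D\)-Lipschitz gradient bound.

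The main obstacle is the lower comparison. It requires the Riccati argument and careful control of smoothness, that is, excluding the cut locus, on \(\mathcal X\). I would handle this by citing the standard Hessian comparison theorem (e.g.\ Petersen's \emph{Riemannian Geometry}) rather than reproving it.
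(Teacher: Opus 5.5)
Your proposal is correct and takes essentially the same route as the source of this lemma: the paper gives no proof of its own and imports it from Lemma~23 of \cite{martinez2024convergence}, which rests on the Hessian comparison theorem for the distance function (the paper itself invokes \cite[Thm.~11.7]{lee2018introduction} in Lemma~\ref{lem:prox_local_convexity}), the radial/orthogonal splitting of $\operatorname{Hess}\tfrac12 d^2$, and the monotonicity of $\delta_+$ and $\zeta_-$ in the distance, which the paper reuses in Lemma~\ref{lem:gh_properties}. Your cut-locus caveat is also well placed: as written, the lemma implicitly needs $\mathcal X$ to be uniquely geodesic, or to lie within the injectivity radius, and that condition holds wherever the paper applies it.
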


\begin{definition}
Let \(\gamma:[0,\ell]\to \Omega\) be a geodesic. A vector field \(J\) along \(\gamma\) is a Jacobi field if $$D_t^2J+R(J,\dot\gamma)\dot\gamma=0,$$ where \(D_t\) denotes the covariant derivative along~\(\gamma\). If \(\gamma(t)=\exp_p(tu)\) and
\(w\in T_p\Omega\), then the Jacobi field satisfying \(J(0)=0\) and \(D_tJ(0)=w\) is given by $J(t)=d(\exp_p)_{tu}(tw)$~\cite[Chapter~5, Proposition~2.7]{do1992riemannian}.
\end{definition}

\begin{definition} [Ch.~1 in~\cite{Lee_2013}]
A Riemannian manifold is called real analytic if it admits an atlas with real-analytic transition maps.
\end{definition}

\begin{definition}\label{def:kl}
Let \(\Omega\) be a Riemannian manifold and let \(f:\Omega \to\mathbb R\) be \(C^1\).
We say that \(f\) satisfies the Riemannian Kurdyka--\L{}ojasiewicz property at \(\bar x\in\Omega\) if there exist
a neighborhood \(U\) of \(\bar x\), \(\delta>0\), and a concave function \(\varphi:[0,\delta)\to[0,\infty)\) such that
\(\varphi(0)=0\), \(\varphi\in C^1(0,\delta)\), \(\varphi'(s)>0\) on \((0,\delta)\), and $\varphi'(f(x)-f(\bar x))\,\|\grad f(x)\|_x \ge 1$ for all \(x\in U\) satisfying \(f(\bar x)<f(x)<f(\bar x)+\delta\).
\end{definition}

\begin{proposition}[Theorem~3.5 in~\cite{hosseini2015convergence} and Section~9 in~\cite{kurdyka2000proof}]\label{prop:analytic_kl}
Let $\Omega$ be a real-analytic Riemannian manifold and let $f:\Omega\to\mathbb R$ be real analytic. Then $f$ is locally Lipschitz and subanalytic; in particular, it is a locally Lipschitz $\mathcal C$-function. Hence $f$ satisfies the Riemannian Kurdyka--{\L}ojasiewicz property at every point of $\Omega$.
\end{proposition}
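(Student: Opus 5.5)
The proof reduces the statement to two known facts: real-analytic functions are subanalytic in charts, and locally Lipschitz subanalytic functions on a manifold satisfy the Riemannian KL property. The analytic structure on \(\Omega\) gives an atlas of real-analytic charts \(\phi:U\to\mathbb R^n\). Fix \(\bar x\in\Omega\) and such a chart around it. In the chart the local representative \(f\circ\phi^{-1}\) is a real-analytic function on an open subset of \(\mathbb R^n\). It is \(C^1\) with continuous derivative, so it is locally Lipschitz in Euclidean coordinates. Since the Riemannian metric is smooth, it is comparable to the Euclidean metric on compact subsets of the chart. Local Lipschitzness therefore transfers to the Riemannian distance \(d\) near \(\bar x\).

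Next comes subanalyticity. The graph of a real-analytic function is locally an analytic set, namely the zero set of \(z-f\circ\phi^{-1}(u)\). Analytic sets are semianalytic, hence subanalytic, so \(f\circ\phi^{-1}\) is subanalytic on relatively compact subsets. Because the transition maps are analytic and subanalyticity is preserved under analytic diffeomorphisms, this notion is chart-independent. Thus \(f\) is a locally Lipschitz subanalytic function, i.e.\ a \(\mathcal C\)-function in the sense of \cite{hosseini2015convergence}.

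The KL conclusion then follows from \cite[Theorem~3.5]{hosseini2015convergence}, which builds on the Kurdyka--{\L}ojasiewicz inequality for subanalytic functions \cite[Section~9]{kurdyka2000proof}. That theorem gives, at every point, a neighborhood \(U\), a \(\delta>0\) and a desingularizing \(\varphi\), with the inequality stated for the Clarke subdifferential. Since \(f\) is \(C^1\), the Clarke subdifferential reduces to \(\{\grad f(x)\}\), and the inequality becomes exactly the one in Definition~\ref{def:kl}.

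The main point needing care is the transfer between the chart and the manifold. The Euclidean gradient of the local representative and the Riemannian gradient \(\grad f\) differ by the metric tensor \(G(u)\), via \(\grad f=G^{-1}\nabla(f\circ\phi^{-1})\). On a compact neighborhood, \(G\) has eigenvalues bounded in some interval \([\lambda,\Lambda]\) with \(\lambda>0\). Hence \(\|\grad f(x)\|_x\ge c\,|\nabla(f\circ\phi^{-1})(u)|\) for some constant \(c>0\). Applying the Euclidean KL inequality to the local representative with some \(\tilde\varphi\), the Riemannian inequality then holds with \(\varphi=\tilde\varphi/c\). Rescaling by \(1/c\) preserves concavity, \(\varphi(0)=0\) and \(\varphi'>0\). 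So the KL property holds at every point of \(\Omega\).
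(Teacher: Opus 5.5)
Your proposal is correct and follows essentially the paper's route. The statement rests on the chain real analytic $\Rightarrow$ locally Lipschitz and subanalytic $\Rightarrow$ $\mathcal C$-function $\Rightarrow$ KL via \cite[Theorem~3.5]{hosseini2015convergence}, with the Clarke subdifferential of a $C^1$ function reducing to $\{\grad f(x)\}$; your chart-transfer paragraph (classical {\L}ojasiewicz inequality in an analytic chart plus the bound $\|\grad f\|_x\ge c\,|\nabla(f\circ\phi^{-1})|$) is exactly the argument the paper gives in the proof of Corollary~\ref{cor:kl}. Note that this paragraph alone already proves the KL property and needs only a continuous metric, which fits the paper's definition of a real-analytic Riemannian manifold (analytic atlas, metric not required to be analytic).
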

\section{Problem Formulation} \label{sec:problem formulation}

Let $(\Omega,d)$ be an \(n\)-dimensional complete Riemannian manifold with bounded sectional curvature, and consider a random pair $(X,Y)\sim \mathcal{P}$, where $X\in \mathbb{R}^q$ and $Y \in \Omega$ and $\mathcal{P}$ is a joint distribution on $\mathbb{R}^q \times \Omega$. Additionally, assume the marginal distributions $X\sim \mathcal{P}_X$ and $Y\sim \mathcal{P}_Y$ exist. Moreover, assume $\mu = \mathbb{E}[X]$ and $\Sigma = \text{Var}(X)$ exist with $\Sigma \succ 0$. The Fr\'echet regression function~\cite{petersen2019frechet} of $Y$ given $X=x$, also known as the conditional barycenter of $Y$ given $X=x$, is defined as
\begin{align}\label{eq:main_frechet}
    m(x) &=  \argmin_{y \in \Omega} M(x,y) \triangleq  \mathbb{E}[d^2(Y,y)\mid X=x].
\end{align}
Classically~\eqref {eq:main_frechet} is solved by approximating it into a system of unconditional expectations~\cite{hansen1982large,bennett2023variational,lin2024type}. Following the global Fr\'echet regression construction of \cite{petersen2019frechet}, one can define the (unconstrained) global Fr\'echet regression function by
\begin{align}\label{eq:glob_frec}
m_{\oplus}(x)
  \triangleq  \argmin_{y\in\Omega} M_{\oplus}(y,x),
  \qquad
M_{\oplus}(y,x)
  \triangleq  \mathbb E\!\left[s(X,x)d^2(Y,y)\right],
\end{align}
where $s(z,x)=1+(z-\mu)^\top\Sigma^{-1}(x-\mu)$, and $\mathbb E[s(X,x)]=1$. Moreover, for every fixed \(x\in\mathbb R^q\), $\mathbb E|s(X,x)| < \infty$. In Euclidean spaces, Eq.~\eqref{eq:glob_frec} reproduces ordinary linear regression; on a general metric space, it should be viewed as a model-based global approximation to the conditional Fréchet mean.

In the sequel, we study the constrained problem
\begin{align}
m_{\oplus}(x)
  \triangleq  \argmin_{y\in\mathcal M_{\mathrm{ex}}} M_{\oplus}(y,x),
\end{align}
where \(\mathcal M_{\mathrm{ex}}\subset\Omega\) is a safe set where solutions are guaranteed to exist even in the presence of negative weights.

Similarly, if instead of the joint distribution $\mathcal{P}$, one has access to independent realizations of $(x_i,y_i)$ for $i=1,\cdots,m$, with positive definite sample covariance, the estimator of the linear Fr\'echet regression function is defined as
\begin{align}
     \hat{m}_{\oplus}(x) &= \argmin_{y \in \mathcal{M}_{\mathrm{ex}}}  \frac{1}{m} \sum_{i=1}^m s_{i,m}(x) d^2(y_i,y) \ \text{with} \ \  \frac{1}{m}\sum_{i=1}^m s_{i,m}(x)  =1, \ \forall x \in \mathbb{R}^q, \nonumber
     \\   s_{i,m}(x)  & = 1 + (x_i {-}\hat{\mu})^\top{\hat{\Sigma}^{-1}}(x{-}{\hat\mu}), \   \hat{\mu} = \frac{1}{m}\sum_{i=1}^m x_i, \nonumber  
      \hat{\Sigma}   = \frac{1}{m} \sum_{i=1}^m(x_i-\hat{\mu})(x_i-\hat{\mu})^T  \nonumber
\end{align}

The definition of the estimator $\hat{m}_{\oplus}(x)$ provides the function class of problems we will focus on in this paper. Specifically, for a given set of pairs $\{(x_i,y_i)\}_{i=1}^m$, where $x_i \in \mathbb{R}^q$ and $y_i \in \Omega$, we will focus on solving optimization problems of the form~Eq.~\eqref{eq:main} where and \(w_i(x)=m^{-1}s_{i,m}(x)\) are possibly negative weights determined by the query point \(x\) and the observed covariates \(\{x_i\}_{i=1}^m\). For the sake of simplicity in the subsequent analysis, we introduce the following notation for Problem~\eqref{eq:main}, separating the summands into those with positive and negative weights.
\begin{align}\label{eq:main2}
   & \min_{y \in \mathcal{M}_{\mathrm{ex}}} f(y,x) \triangleq  g(y) - h(y), \ \text{where} \\
   & g(y) = \sum_{w_i(x)\geq 0}w_{i}(x)d^{2}(y,y_i), \ \text{and} \  \ 
    h(y) =- \sum_{w_{j}(x)<0}w_j (x)d^{2}(y,y_j). \nonumber
\end{align}
Note that implicit dependencies on $x$ are omitted but should be clear from context. Moreover, for a given $x\in\mathbb{R}^q$, we define $$w_{+}(x) = \sum_{w_i(x)\geq 0}w_i(x), \ \  \text{and} \ \  w_{-}(x) = -\sum_{w_j(x)< 0}w_j(x).$$

\begin{remark}[Global and local FRIDA]\label{rem:global_local_frida}
The weights \(w_i(x)=m^{-1}s_{i,m}(x)\) defined above are the global affine
weights of Fr\'echet regression.  When Algorithm~\ref{alg:rpdca}
is applied with these weights, we call the resulting method \emph{global FRIDA}.

The same framework also covers local Fr\'echet regression by replacing the
global weights with the local-linear weights in~\cite{petersen2019frechet}. For $j=0,1,2,$
\[
    w^{\mathrm{loc}}_{i,m}(x)
    =
    \frac{1}{m}K_h(x_i-x)
    \frac{\mu_2(x)-\mu_1(x)(x_i-x)}
    {\mu_0(x)\mu_2(x)-\mu_1(x)^2},
    \mu_j(x)
    =
    \frac{1}{m}\sum_{i=1}^m K_h(x_i-x)(x_i-x)^j,
\]
\(K_h(u)=h^{-1}K(u/h)\) for a bandwidth \(h>0\).With standard kernels such as Gaussian, Epanechnikov, or quartic
\cite{MR1383587}, Algorithm~\ref{alg:rpdca} with these weights is called
\emph{local FRIDA}.

Thus, global and local FRIDA differ only in the weights: after recomputing
\(w_+(x)\) and \(w_-(x)\), the objective, DC splitting, and algorithm are
unchanged. The admissible region may change, but FRIDA remains agnostic to the
weight construction.
\end{remark}
\begin{remark}
    Problem \eqref{eq:main} is not a barycenter computation based on a convex combination of squared distances; it is an affine combination, since some weights may be negative. In fact, it is common to have negative weights. For example, if the $x_i$ sampled from $X$ are $x_1 = 0, x_2 = 2, x_3 = 4$ and set $x = {1}/{2}$, then the corresponding weights are: $w_1 = {17}/{24}, w_2 = {1}/{3}, w_3 = -{1}/{24}$.  Generally, $w_i<0$ if and only if $(x_i-\hat\mu)^\top \hat{\Sigma}^{-1} (x-\hat{\mu})<-1$. A simple sufficient condition for nonnegativity of all weights follows from
Cauchy--Schwarz. Define $D \triangleq  \max_{1\le i\le m}\|\hat\Sigma^{-1/2}(x_i-\hat\mu)\|_2$. Then $\|\hat\Sigma^{-1/2}(x-\hat\mu)\|_2 \le {1}/{D}
$ and $w_i(x)\ge 0,\ \forall i$. Depending on the geometry of the observed covariates, \(D\) can be large, in which case the sufficient nonnegativity region becomes small and negative weights may occur even for moderate values of \(\|x\|_2\). Formally, the following lemma specifies the range of $x$ values that yield negative weights for arbitrary data points in real space.
\end{remark}
\begin{lemma}
Let $x_1,\dots,x_m\in\mathbb{R}^{q}$ and without loss of generality, let $\hat{\mu} = 0$, $\hat{\Sigma} = I$. Then, $$w_i(x)\geq 0, \ \forall x \in \mathbb{X} \subseteq \mathbb{R}^q,$$ where $\mathbb{X} =\bigcap_{i=1}^m \{\,x\in\mathbb{R}^q:\ x_i^\top x\ge -1\,\}$. 
\end{lemma}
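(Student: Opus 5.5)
The argument is a direct computation from the definition of the global weights, so the plan is short. With the normalization $\hat\mu=0$ and $\hat\Sigma=I$, the weight formula $w_i(x)=m^{-1}s_{i,m}(x)$ becomes
\[
w_i(x)=\frac{1}{m}\bigl(1+x_i^\top x\bigr).
\]
Since $m>0$, the sign of $w_i(x)$ equals the sign of $1+x_i^\top x$. Thus $w_i(x)\ge 0$ exactly when $x_i^\top x\ge -1$, which says that $x$ lies in the closed half-space $H_i=\{x\in\mathbb R^q: x_i^\top x\ge -1\}$.

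Requiring this for every $i$ at once means requiring $x\in\bigcap_{i=1}^m H_i=\mathbb X$. So every $x\in\mathbb X$ gives $w_i(x)\ge 0$ for all $i$. The converse also holds: if some $x\notin\mathbb X$, then some $w_i(x)<0$. This matches the earlier remark that $w_i<0$ if and only if $(x_i-\hat\mu)^\top\hat\Sigma^{-1}(x-\hat\mu)<-1$. The set $\mathbb X$ is an intersection of closed half-spaces, so it is a closed convex polyhedron containing the origin.

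The only step needing care is justifying the phrase ``without loss of generality''. I would use the affine change of variables $\tilde x_i=\hat\Sigma^{-1/2}(x_i-\hat\mu)$ and $\tilde x=\hat\Sigma^{-1/2}(x-\hat\mu)$, which is valid because the sample covariance is assumed positive definite. Under this map,
\[
(x_i-\hat\mu)^\top\hat\Sigma^{-1}(x-\hat\mu)=\tilde x_i^\top\tilde x ,
\]
so each weight is unchanged. The transformed covariates have sample mean $0$ and sample covariance $I$. The general case is therefore the normalized statement pulled back by this affine bijection, and $\mathbb X$ maps to
\[
\bigcap_{i=1}^m\{x:(x_i-\hat\mu)^\top\hat\Sigma^{-1}(x-\hat\mu)\ge -1\}.
\]

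Finally, I would note that $\mathbb X$ contains the ball $\|x\|_2\le 1/\max_i\|x_i\|_2$, by Cauchy--Schwarz. This recovers the sufficient condition from the preceding remark as a special case.
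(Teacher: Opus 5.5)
Your proposal is correct and matches the paper's proof: in whitened coordinates $w_i(x)=\frac{1}{m}(1+x_i^\top x)$, so $w_i(x)\ge 0$ if and only if $x_i^\top x\ge -1$, and intersecting these half-spaces over $i$ gives $\mathbb{X}$. Your justification of the whitening step via $\hat\Sigma^{-1/2}(\cdot-\hat\mu)$, the converse, and the Cauchy--Schwarz ball are all correct additions that the paper does not need for the stated claim.
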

\begin{proof}
In the whitened coordinates \(\hat\mu=0\) and \(\hat\Sigma=I\), one has $w_i(x)=(1/m)(1+x_i^\top x)$. Hence \(w_i(x)\ge 0\) if and only if \(x_i^\top x\ge -1\). Imposing this for all
\(i=1,\dots,m\) yields the stated intersection of half-spaces.
\end{proof}

The convergence analysis has three steps. We first identify a compact, well-posed region for the affine weighted objective. Second, we construct a proximal DC model that is strongly geodesically convex on smaller local balls. Third, we combine the resulting descent estimate with compactness to prove stationarity of accumulation points.

\section{Existence of Solutions, FRIDA and Main Results}\label{sec:existence}

\subsection{Existence of a Minimizer on a Safe Set}

In this section, we isolate a compact geodesic ball containing all responses and show that the affine weighted objective attains a minimum there. A boundary-gradient condition then rules out boundary minimizers, so every minimizer is interior and therefore stationary. Strong convexity will only be needed later, on smaller local balls used in the proximal subproblems. We then specialize this criterion to finite-sample and ellipsoidal covariate regions.

\begin{assumption}\label{assum:bounded_curv}
     The Riemannian manifold \((\Omega,d)\) is complete and its sectional curvatures satisfy: $$-\Lambda_- \le \sec_\Omega \le \Lambda_+ < \infty,  \ \ \Lambda_-,\Lambda_+\ge 0.$$
\end{assumption}

\begin{assumption}\label{assum:bound_y}
      There exists $c\in\Omega$ and $r>0$ such that $\mathbb{P}\big(Y\in B_r(c)\big)=1$.
\end{assumption}

\begin{assumption}\label{assum:bound_ex}
    There exists $\rho_{\mathrm{ex}}>0$ such that $r{<}\rho_{\mathrm{ex}}{<} \min\{\iota_{r,c},\pi/\sqrt{\Lambda_+}\}- r$.
\end{assumption}

\begin{assumption}\label{assum:rho}
    There exists $\rho{>}0$ such that $r{<} \rho {<}  \min\{(1/2)\iota_{\rho_{\mathrm{ex}},c}, \pi/ (2\sqrt{\Lambda_+})\}$.
\end{assumption}
Assumptions~\ref{assum:bound_y}--\ref{assum:bound_ex} define the
existence-safe ball
\(\M_{\rm ex}\triangleq\overline{B_{\rho_{\rm ex}}(c)}\), which contains
\(\M_r\triangleq\overline{B_r(c)}\) with a positive injectivity margin. Indeed, for
any \(z\in\M_r\) and \(y\in\M_{\rm ex}\),
\(d(z,y)\le r+\rho_{\rm ex}<\iota_{r,c}\le \operatorname{inj}_\Omega(z)\). Thus, \( d^2(z,y)\), and \(M_\oplus(\cdot,x)\) are smooth on
\(\M_{\rm ex}\) with respect to $y$. Assumption~\ref{assum:rho} introduces the smaller
algorithmic ball \(\M_\rho\triangleq\overline{B_\rho(c)}\). Since
\(2\rho<\iota_{\rho,c}\) and \(\rho<\pi/(2\sqrt{\Lambda_+})\),
\(\M_\rho\) is a strongly convex normal ball~\cite[Lemma~3.2.1]{wintraecken2015ambient};
in particular, \(\log_x(y)\) is well defined for all
\(x,y\in\M_\rho\), and every two points of \(\M_\rho\) are joined by a
unique minimizing geodesic in \(\M_\rho\). Finally,
Assumption~\ref{assum:bounded_curv} gives the two-sided curvature control
used below: \(\Lambda_+\) controls the lower Hessian-comparison constants,
while \(\Lambda_-\) controls the upper smoothness constants. Set
\[
\delta_{\rm ex}\triangleq\delta_+(r+\rho_{\rm ex}),\ \
\zeta_{\rm ex}\triangleq\zeta_-(r+\rho_{\rm ex}),\ \
L_R\triangleq\sup_{q\in\M_{\rm ex}}\|(\nabla R)_q\|,\ \
\Lambda_0\triangleq\max\{\Lambda_+,\Lambda_-\}.
\]
Since \(\M_{\rm ex}\) is compact and \(R,\nabla R\) are smooth, \(L_R<\infty\), and \(\|R_q\|\le c_n\Lambda_0\) on \(\M_{\rm ex}\)
for some \(c_n>0\) depending only on \(n\) and the tensor norm~\cite[Proposition~1.1]{viaclovsky_riemannian_geometry_2011}.

\begin{theorem}\label{thm:main_safe}
Let Assumptions~\ref{assum:bounded_curv}--\ref{assum:bound_ex} hold, for every fixed \(x\in\mathbb R^q\) such that \(\E|s(X,x)|<\infty\), the objective \(M_\oplus(\cdot,x)\) attains a minimum on \(\mathcal M_{\mathrm{ex}}\). If, in addition,
\begin{align}\label{cond:weights_general}
    \E[(s(X,x))_-] < {(\rho_{\mathrm{ex}}-r)}/{(2r)},
\end{align}
then every minimizer \(y^\star\in \operatorname{int}(\mathcal M_{\mathrm{ex}})\) and is stationary, i.e., $\grad_y M_\oplus(y^\star,x)=0$.
\end{theorem}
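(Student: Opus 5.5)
Existence follows from continuity and compactness. \(\mathcal M_{\mathrm{ex}}=\overline{B_{\rho_{\mathrm{ex}}}(c)}\) is closed and bounded in a complete manifold, so it is compact by Hopf--Rinow. For fixed \(x\), the integrand satisfies \(|s(X,x)d^2(Y,y)|\le |s(X,x)|(r+\rho_{\mathrm{ex}})^2\) for all \(y\in\mathcal M_{\mathrm{ex}}\), using Assumption~\ref{assum:bound_y}. We also have \(|d^2(Y,y)-d^2(Y,y')|\le 2(r+\rho_{\mathrm{ex}})\,d(y,y')\). Hence \(M_\oplus(\cdot,x)\) is finite and Lipschitz on \(\mathcal M_{\mathrm{ex}}\) with constant \(2(r+\rho_{\mathrm{ex}})\E|s(X,x)|\), and it attains its minimum there.

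For interiority, I would argue by contradiction. Suppose a minimizer \(y^\star\) lies on the boundary, so \(d(c,y^\star)=\rho_{\mathrm{ex}}\). Let \(\gamma\) be the unit-speed minimizing geodesic from \(y^\star\) to \(c\); it stays in \(\mathcal M_{\mathrm{ex}}\) because \(d(c,\gamma(t))=\rho_{\mathrm{ex}}-t\). Set \(v=\dot\gamma(0)=\log_{y^\star}(c)/\rho_{\mathrm{ex}}\). Since \(d(z,y^\star)\le r+\rho_{\mathrm{ex}}<\iota_{r,c}\) for every \(z\in\mathcal M_r\), the map \(y\mapsto \tfrac12 d^2(z,y)\) is smooth near \(y^\star\) with gradient \(-\log_{y^\star}(z)\). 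Differentiating under the expectation (dominated convergence, using the Lipschitz bound above) gives
\[
\tfrac{d}{dt}\big|_{t=0^+}M_\oplus(\gamma(t),x)=-2\,\E\big[s(X,x)\langle \log_{y^\star}(Y),v\rangle\big].
\]
I would split \(s=s_+-s_-\) and bound the two parts separately.

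For the negative part, \(|\langle\log_{y^\star}(Y),v\rangle|\le d(Y,y^\star)\le r+\rho_{\mathrm{ex}}\). For the positive part, the aim is a lower bound \(\langle \log_{y^\star}(z),v\rangle\ge \rho_{\mathrm{ex}}-r\) for all \(z\in\mathcal M_r\). This is the main obstacle: in Euclidean space it is immediate, but on the manifold I would get it from the first variation of distance. The function \(t\mapsto d(z,\gamma(t))\) has derivative \(-\langle \log_{y^\star}(z),v\rangle/d(z,y^\star)\) at \(0\). Combining the triangle inequality \(d(z,\gamma(t))\ge d(c,\gamma(t))-r\) with convexity of \(d(z,\cdot)\) along \(\gamma\) would yield the bound. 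If that convexity is unavailable at radius \(r+\rho_{\mathrm{ex}}\), I would instead use a Toponogov/Rauch comparison with curvature \(\Lambda_+\); this is presumably why Assumption~\ref{assum:bound_ex} involves \(\pi/\sqrt{\Lambda_+}\).

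With these bounds, and using \(\E s=1\) so that \(\E s_+=1+\E s_-\), the directional derivative is at most
\[
-2\big[(1+\E s_-)(\rho_{\mathrm{ex}}-r)-\E s_-(r+\rho_{\mathrm{ex}})\big]=-2\big[\rho_{\mathrm{ex}}-r-2r\,\E s_-\big].
\]
This is strictly negative under \eqref{cond:weights_general}. So moving from \(y^\star\) toward \(c\) strictly decreases \(M_\oplus(\cdot,x)\) while staying feasible, contradicting minimality. Hence \(y^\star\in\operatorname{int}(\mathcal M_{\mathrm{ex}})\). Since \(M_\oplus(\cdot,x)\) is \(C^1\) on the interior, first-order optimality at an interior minimizer gives \(\grad_y M_\oplus(y^\star,x)=0\).
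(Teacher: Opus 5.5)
Your argument has the same architecture as the paper's proof. Existence follows from compactness. At a boundary point you take a one-sided derivative, split it into $s_+$ and $s_-$ parts bounded by $\rho_{\mathrm{ex}}-r$ and $\rho_{\mathrm{ex}}+r$, and combine them via $\E s_+=1+\E s_-$; the paper's outward normal is $\nu_{y^\star}=-v$. Your existence step, feasibility of $\gamma$, dominated convergence and final algebra are fine.

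The gap is the step you flag yourself: $\langle\log_{y^\star}(z),v\rangle\ge\rho_{\mathrm{ex}}-r$ for all $z\in\M_r$. The paper does not derive this; it quotes it from \cite[Lem.~3.4.8]{wintraecken2015ambient}. Your first route cannot produce it. With $\phi(t)=d(z,\gamma(t))$, the triangle inequality $\phi(t)\ge\rho_{\mathrm{ex}}-r-t$ and convexity $\phi(t)\ge\phi(0)+t\phi'(0)$ are both lower bounds on $\phi$, so they cannot bound $\phi'(0)$ from above. Concretely, the linear profile $\phi(t)=\rho_{\mathrm{ex}}-r+t(2r-\rho_{\mathrm{ex}})/\rho_{\mathrm{ex}}$ is convex and $1$-Lipschitz. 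It also satisfies $\rho_{\mathrm{ex}}-r-t\le\phi(t)\le\rho_{\mathrm{ex}}+r-t$ and $\phi(\rho_{\mathrm{ex}})\le r$. Yet it gives $-\phi(0)\phi'(0)=(\rho_{\mathrm{ex}}-r)(\rho_{\mathrm{ex}}-2r)/\rho_{\mathrm{ex}}<\rho_{\mathrm{ex}}-r$. Even $\delta$-strong convexity of $\tfrac12 d^2(z,\cdot)$ along $\gamma$ only yields $\tfrac{1+\delta}{2}\rho_{\mathrm{ex}}-r$. The constant $\rho_{\mathrm{ex}}-r$ needs Hessian equal to $1$, i.e.\ Euclidean linearity.

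Your second route needs both a correction and an extra hypothesis. Toponogov's theorem uses the lower bound $-\Lambda_-$ and bounds the angle at $y^\star$ from below, which is the wrong direction. With $\sec\le\Lambda_+$ the right tool is the Rauch/Alexandrov angle comparison. Under the usual size conditions it shows $\angle_{y^\star}(z,c)$ is at most the corresponding model angle, reducing the claim to the sphere of curvature $\Lambda_+$. For $\Lambda_+=1$, write $\rho=\rho_{\mathrm{ex}}$, $b=d(c,z)$, $\ell=d(y^\star,z)$. The spherical law of cosines gives
\[
\langle\log_{y^\star}(z),v\rangle=\frac{\ell\,(\cos b-\cos\rho\cos\ell)}{\sin\ell\,\sin\rho}.
\]
If $\cos\rho\ge 0$, both $\ell/\sin\ell$ and $\cos b-\cos\rho\cos\ell$ are positive and nondecreasing on $[\rho-b,\pi)$. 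So the right-hand side is at least its value $\rho-b\ge\rho-r$ at $\ell=\rho-b$, and the step goes through when $\rho_{\mathrm{ex}}\le\pi/(2\sqrt{\Lambda_+})$.

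However, Assumption~\ref{assum:bound_ex} allows $\rho_{\mathrm{ex}}$ up to $\min\{\iota_{r,c},\pi/\sqrt{\Lambda_+}\}-r$, and there the inequality is false. Take the unit sphere with $r=1/2$ and $\rho_{\mathrm{ex}}=5/2<\pi-r$. Let $z$ be at distance $1/2$ from $c$, making a right angle at $c$ with the geodesic to $y^\star$. Then $\cos\ell=\cos\rho\cos r$ and $\langle\log_{y^\star}(z),v\rangle=\ell\cos r\sin\rho/\sin\ell\approx 1.74<2=\rho_{\mathrm{ex}}-r$.

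To complete the proof you have two options. You can add a hypothesis such as $\rho_{\mathrm{ex}}\le\pi/(2\sqrt{\Lambda_+})$ and carry out the comparison argument. Alternatively, you can replace $\rho_{\mathrm{ex}}-r$ by a smaller curvature-dependent constant, which changes the threshold in \eqref{cond:weights_general}. The inequality as quoted in the paper under Assumptions~\ref{assum:bounded_curv}--\ref{assum:bound_ex} is subject to the same caveat.
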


\begin{proof}
We first show that \(M_\oplus(\cdot,x)\) is well defined and attains a minimum on~\(\mathcal M_{\mathrm{ex}}\).
By Assumption~\ref{assum:bound_y}, \(Y\in B_r(c)\) almost surely, so for every \(y\in \mathcal M_{\mathrm{ex}}\),
\[d(Y,y)\le d(Y,c)+d(c,y)\le r+\rho_{\mathrm{ex}}
\qquad \text{a.s.}
\]
Hence $|s(X,x)|\,d^2(Y,y)\le (r+\rho_{\mathrm{ex}})^2 |s(X,x)|$, and the right-hand side is integrable. Therefore \(M_\oplus(y,x)\) is finite for every \(y\in \mathcal M_{\mathrm{ex}}\). For every \(z\in \M_r\), the map \(d^2(z,y)\) is smooth on \(\mathcal M_{\mathrm{ex}}\), and
\[
\sup_{z\in \M_r\,y\in\mathcal M_{\mathrm{ex}}}
\|\grad_y d^2(z,y)\|_y
\le 2(r+\rho_{\mathrm{ex}})<\infty.
\]
Thus, dominated convergence allows differentiation under the expectation, so $M_\oplus(\cdot,x)\in C^1(\mathcal M_{\mathrm{ex}})$.
Since \((\Omega,d)\) is complete, Hopf--Rinow implies that the closed bounded ball \(\mathcal M_{\mathrm{ex}}\) is compact. Hence \(M_\oplus(\cdot,x)\) attains a minimum on \(\mathcal M_{\mathrm{ex}}\).

Now let \(y\in \partial\mathcal M_{\mathrm{ex}}\), and let \(\nu_y\triangleq \grad d(c,\cdot)|_y\) be the outward unit normal.
By \cite[Lem.~3.4.8]{wintraecken2015ambient}, for every \(z\in \M_r\), $
\langle -\log_y(z),\nu_y\rangle_y \ge \rho_{\mathrm{ex}}-r$. Also,
\[
\|-\log_y(z)\|_y=d(y,z)\le \rho_{\mathrm{ex}}+r.
\]
\[
\text{Therefore}\ \ \ \langle \grad_y M_\oplus(y,x),\nu_y\rangle_y
\ge
2(\rho_{\mathrm{ex}}-r)\E[(s(X,x))_+]
-
2(\rho_{\mathrm{ex}}+r)\E[(s(X,x))_-].
\]
Since \(\E[(s(X,x))_+]-\E[(s(X,x))_-]=1\), condition~\eqref{cond:weights_general} implies
\[
(\rho_{\mathrm{ex}}-r)\E[(s(X,x))_+]-(\rho_{\mathrm{ex}}+r)\E[(s(X,x))_-]
=
\rho_{\mathrm{ex}}-r-2r\,\E[(s(X,x))_-]
>0.
\]
Thus, the outward directional derivative is strictly positive on \(\partial\mathcal M_{\mathrm{ex}}\), so no minimizer can lie on the boundary. Hence, every minimizer \(y^\star\) lies in \(\operatorname{int}(\mathcal M_{\mathrm{ex}})\), and the first-order necessary condition gives $$\grad_y M_\oplus(y^\star,x)=0.$$
\end{proof}

Theorem~\ref{thm:main_safe} shows that if the total negative weight is not too large, then minimizers exist and cannot occur on the boundary of the safe ball.

\begin{corollary}\label{cor:moments}
Let Assumptions~\ref{assum:bounded_curv},~\ref{assum:bound_y}, and~\ref{assum:bound_ex} hold. If
\[
x\in \mathcal X_{\mathrm{ex}},
\qquad
\mathcal X_{\mathrm{ex}}
\triangleq 
\left\{
x\in \mathbb{R}^q:
(x-\mu)^\top\Sigma^{-1}(x-\mu)
<
\left({\rho_{\mathrm{ex}}}/{r}\right)^2-1
\right\},
\]
then \(M_{\oplus}(\cdot,x)\) attains a minimum in \(\operatorname{int}(\mathcal M_{\mathrm{ex}})\) and is stationary.
\end{corollary}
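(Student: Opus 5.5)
The plan is to reduce the corollary to \cref{thm:main_safe}. It suffices to show that every \(x\in\mathcal X_{\mathrm{ex}}\) satisfies \(\E|s(X,x)|<\infty\) and the signed-weight condition~\eqref{cond:weights_general}. The theorem then gives existence of a minimizer on \(\mathcal M_{\mathrm{ex}}\), and shows that every minimizer lies in \(\operatorname{int}(\mathcal M_{\mathrm{ex}})\) and is stationary.

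Fix \(x\) and write \(s(X,x)=1+Z\), where \(Z\triangleq (X-\mu)^\top\Sigma^{-1}(x-\mu)\). Then \(\E Z=0\), and
\[
\E Z^2=(x-\mu)^\top\Sigma^{-1}\Sigma\Sigma^{-1}(x-\mu)=a^2,
\qquad a^2\triangleq (x-\mu)^\top\Sigma^{-1}(x-\mu).
\]
Hence \(\E s^2=1+a^2\), and by Cauchy--Schwarz \(\E|s|\le\sqrt{1+a^2}<\infty\). This gives integrability; it uses only the existence of \(\Sigma\), which is assumed.

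The key step is to avoid estimating the negative part directly. Instead, combine the two identities
\[
\E[s_+]-\E[s_-]=\E s=1,
\qquad
\E[s_+]+\E[s_-]=\E|s|.
\]
Solving them gives
\[
\E[(s(X,x))_-]=\tfrac12\big(\E|s|-1\big)\le \tfrac12\big(\sqrt{1+a^2}-1\big).
\]
Condition~\eqref{cond:weights_general} therefore holds as soon as \(\sqrt{1+a^2}-1<(\rho_{\mathrm{ex}}-r)/r\), that is, \(\sqrt{1+a^2}<\rho_{\mathrm{ex}}/r\). Both sides are positive, and \(\rho_{\mathrm{ex}}>r\) by Assumption~\ref{assum:bound_ex}. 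Squaring is thus equivalent to \(a^2<(\rho_{\mathrm{ex}}/r)^2-1\), which is exactly the defining inequality of \(\mathcal X_{\mathrm{ex}}\).

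Invoking \cref{thm:main_safe} then finishes the proof. The only delicate point is the identity \(\E s_-=(\E|s|-1)/2\) combined with the second-moment bound. Bounding \(\E s_-\) through tail estimates would be lossy, whereas this route reproduces the stated ellipsoid exactly.
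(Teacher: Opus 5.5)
Your proof is correct and follows essentially the paper's route. You reduce to Theorem~\ref{thm:main_safe} via $\E[(s(X,x))_-]=\tfrac12(\E|s(X,x)|-1)\le\tfrac12\bigl(\sqrt{\E[s(X,x)^2]}-1\bigr)$ with $\E[s(X,x)^2]=1+(x-\mu)^\top\Sigma^{-1}(x-\mu)$, which is the population version of the argument the paper's source gives for the finite-sample ellipsoid $\widehat{\mathcal X}_{\mathrm{ex}}$.
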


\begin{corollary}\label{cor:finite_exact}
Let \(\{(x_i,y_i)\}_{i=1}^m\) be \(m\) i.i.d. realizations of \((X,Y)\sim\mathcal P\), and let Assumptions~\ref{assum:bounded_curv},~\ref{assum:bound_y}, and~\ref{assum:bound_ex} hold. For every fixed \(x\in\mathbb R^q\), if $w_-(x)<{(\rho_{\mathrm{ex}}-r)}/{(2r)}$, then \(f(\cdot,x)\) in \eqref{eq:main2} has an interior minimizer in \(\mathcal M_{\mathrm{ex}}\); every minimizer is stationary. Moreover, the same result follows if alternatively
\[
x\in \widehat{\mathcal X}_{\mathrm{ex}},
\qquad
\widehat{\mathcal X}_{\mathrm{ex}}
\triangleq 
\left\{
x\in\mathbb R^q:
(x-\hat\mu)^\top\hat\Sigma^{-1}(x-\hat\mu)
<
\left({\rho_{\mathrm{ex}}}/{r}\right)^2-1
\right\}
\]
\end{corollary}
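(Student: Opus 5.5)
The plan is to reduce Corollary~\ref{cor:finite_exact} to Theorem~\ref{thm:main_safe} by taking the empirical measure as the underlying distribution. Let \(\hat{\mathcal P}\) be the uniform distribution on \(\{(x_i,y_i)\}_{i=1}^m\) and \((\hat X,\hat Y)\sim\hat{\mathcal P}\). Its mean and covariance are \(\hat\mu\) and \(\hat\Sigma\succ 0\), so the corresponding score is \(s(x_i,x)=s_{i,m}(x)\). This gives \(M_\oplus(y,x)=\frac1m\sum_i s_{i,m}(x)d^2(y_i,y)=f(y,x)\), since \(w_i(x)=m^{-1}s_{i,m}(x)\), and \(\E[(s(\hat X,x))_-]=w_-(x)\). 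Integrability is trivial for a finite measure.

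Assumption~\ref{assum:bound_y} holds for \(\hat{\mathcal P}\) with the same \(c,r\), because each \(y_i\) is a realization of \(Y\) and hence lies in \(B_r(c)\) almost surely. If one wants the statement surely, we simply assume every \(y_i\in B_r(c)\). Assumptions~\ref{assum:bounded_curv} and~\ref{assum:bound_ex} concern only \(\Omega,c,r\) and are unchanged. Theorem~\ref{thm:main_safe} then says that \(f(\cdot,x)\) attains its minimum on \(\mathcal M_{\mathrm{ex}}\). Under \(w_-(x)<(\rho_{\mathrm{ex}}-r)/(2r)\), every minimizer lies in the interior and satisfies \(\grad f=0\), which is the first claim. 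The boundary argument in that proof uses only pointwise bounds on \(\log_y(y_i)\) and the normalization \(w_+-w_-=1\), so it transfers verbatim.

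For the ellipsoidal version, I would show that \(x\in\widehat{\mathcal X}_{\mathrm{ex}}\) implies \(w_-(x)<(\rho_{\mathrm{ex}}-r)/(2r)\); this is the same step as in Corollary~\ref{cor:moments}. Set \(a_i=(x_i-\hat\mu)^\top\hat\Sigma^{-1}(x-\hat\mu)\), so \(s_{i,m}=1+a_i\) and \(\frac1m\sum a_i=0\). By Cauchy--Schwarz in the \(\hat\Sigma^{-1}\) inner product together with the empirical identity \(\frac1m\sum_i (x_i-\hat\mu)(x_i-\hat\mu)^\top=\hat\Sigma\), we get
\[
\tfrac1m\textstyle\sum_i a_i^2=(x-\hat\mu)^\top\hat\Sigma^{-1}(x-\hat\mu)\eqqcolon t^2 .
\]
Next bound \(w_-=\frac1m\sum_i(1+a_i)_-\). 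Since \(\sum_i(1+a_i)=m\), we have \(w_-=\frac12\big(\frac1m\sum_i|1+a_i|-1\big)\), and by Jensen \(\frac1m\sum_i|1+a_i|\le\sqrt{\frac1m\sum_i(1+a_i)^2}=\sqrt{1+t^2}\). Hence \(w_-\le\frac12(\sqrt{1+t^2}-1)\). The condition \(t^2<(\rho_{\mathrm{ex}}/r)^2-1\) gives \(\sqrt{1+t^2}<\rho_{\mathrm{ex}}/r\), so \(w_-<(\rho_{\mathrm{ex}}-r)/(2r)\), and the first part applies.

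The main obstacle is this middle inequality, because a cruder bound such as \(\E|a|\le t\) would yield only \(w_-\le t/2\), which does not match the stated ellipsoid. The identity \(w_-=\frac12(\E|s|-1)\) followed by Jensen on \(\E|s|\) is what produces exactly the threshold \((\rho_{\mathrm{ex}}/r)^2-1\). The remaining checks, namely the nondegeneracy of \(\hat\Sigma\) and the almost-sure containment of the \(y_i\), are routine.
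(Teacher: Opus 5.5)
Your proposal is correct and follows the paper's route: the first claim is the empirical-measure instance of Theorem~\ref{thm:main_safe}, and the ellipsoidal claim uses the same chain $w_-(x)=\tfrac12\bigl(\tfrac1m\sum_i|s_{i,m}(x)|-1\bigr)\le\tfrac12\bigl(\sqrt{1+(x-\hat\mu)^\top\hat\Sigma^{-1}(x-\hat\mu)}-1\bigr)$ as the paper's argument. One minor correction: the identity $\tfrac1m\sum_i a_i^2=t^2$ is a direct computation from $\tfrac1m\sum_i(x_i-\hat\mu)(x_i-\hat\mu)^\top=\hat\Sigma$, not a Cauchy--Schwarz step, which would only give an inequality there.
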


% \begin{corollary}\label{cor:finite_var}
% Let \(\{(x_i,y_i)\}_{i=1}^m\) be \(m\) i.i.d. realizations of \((X,Y)\sim\mathcal P\), and let
% Assumptions~\ref{assum:bounded_curv},~\ref{assum:bound_y}, and~\ref{assum:bound_ex} hold.
% If
% \[
% x\in \widehat{\mathcal X}_{\mathrm{ex}},
% \qquad
% \widehat{\mathcal X}_{\mathrm{ex}}
% \triangleq 
% \left\{
% x\in\mathbb R^q:
% (x-\hat\mu)^\top\hat\Sigma^{-1}(x-\hat\mu)
% <
% \left({\rho_{\mathrm{ex}}}/{r}\right)^2-1
% \right\},
% \]
% then \(f(\cdot,x)\) in \eqref{eq:main2} has an interior minimizer in \(\mathcal M_{\mathrm{ex}}\); every minimizer is stationary.
% \end{corollary}

% \begin{proof}
% Let \(s_i\triangleq s_{i,m}(x)\) and recall \(w_i(x)=m^{-1}s_i\). Since \(m^{-1}\sum_{i=1}^m s_i=1\),
% \[
% w_-(x)=\frac1m\sum_{i=1}^m (s_i)_-
% =\frac12\left(\frac1m\sum_{i=1}^m |s_i|-1\right)
% \le \frac12\left(\sqrt{\frac1m\sum_{i=1}^m s_i^2}-1\right).
% \]
% Write
% \[
% u=\hat\Sigma^{-1/2}(x-\hat\mu),
% \qquad
% z_i=\hat\Sigma^{-1/2}(x_i-\hat\mu),
% \]
% so that \(s_i=1+z_i^\top u\). Since
% \[
% \frac1m\sum_{i=1}^m z_i=0,
% \qquad
% \frac1m\sum_{i=1}^m z_i z_i^\top=I,
% \]
% we obtain
% \[
% \frac1m\sum_{i=1}^m s_i^2
% =
% \frac1m\sum_{i=1}^m (1+z_i^\top u)^2
% =
% 1+\|u\|^2
% =
% 1+(x-\hat\mu)^\top\hat\Sigma^{-1}(x-\hat\mu).
% \]
% Hence, if
% \[
% (x-\hat\mu)^\top\hat\Sigma^{-1}(x-\hat\mu)
% <
% \left(\frac{\rho_{\mathrm{ex}}}{r}\right)^2-1,
% \]
% then
% \[
% w_-(x)
% <
% \frac12\left(\frac{\rho_{\mathrm{ex}}}{r}-1\right)
% =
% \frac{\rho_{\mathrm{ex}}-r}{2r}.
% \]
% The conclusion follows from Corollary~\ref{cor:finite_exact}.
% \end{proof}

 Corollary~\ref{cor:moments} gives a convenient sufficient condition directly in covariate space: an explicit ellipsoidal region where existence and interior stationarity are guaranteed on~\(\mathcal M_{\mathrm{ex}}\). Corollary~\ref{cor:finite_exact} is the exact finite-sample analog of Theorem~\ref{thm:main_safe}: the population negative-weight condition is replaced by its empirical counterpart, and the conclusion holds on~\(\mathcal M_{\mathrm{ex}}\).  It also provides a directly computable safe extrapolation region from the observed covariates for the larger existence-safe ball \(\mathcal M_{\mathrm{ex}}\).
 
The results in this section identify concrete geometric regions in covariate space where minimizers are guaranteed to exist and, moreover, are forced to lie strictly inside the existence-safe ball \(\mathcal M_{\mathrm{ex}}\), we thus define $f_*\triangleq \min_{y\in \mathcal M_{\mathrm{ex}}}f(y,x)$.

\subsection{FRIDA: Riemannian Iterative DC Algorithm and Main Results}

The previous subsection guarantees that the objective is well defined on the existence-safe set \(\mathcal M_{\mathrm{ex}}\). We now introduce the method FRIDA:  Fréchet Regression via Riemannian Iterative DC Algorithm, which builds upon a proximal DC step on an adaptive local ball \(\mathcal M_k\subset \mathcal M_{\mathrm{ex}}\), chosen so that the linearization of the concave part remains controlled and the proximal model is strongly geodesically convex.

\begin{algorithm}[htb!]
\caption{FRIDA: Fréchet Regression via Riemannian Iterative DC Algorithm}
\label{alg:rpdca}
\begin{algorithmic}[1]
\REQUIRE  \(y_0\in\operatorname{int}(\M_{\rm ex})\), 
\(\zeta=1/4\), \(x\in\mathbb R^q\), \(\rho>0\), curvature constants
\(\Lambda_\pm,L_R,c_n\), \(\epsilon_k>0\) with
\(\sum_{k=0}^{\infty}\epsilon_k<\infty\), \(\theta\in(0,1)\), \(\eta_0>0\),
and \(\delta_{\rm ex}\triangleq\delta_+(r+\rho_{\rm ex})\).
\FOR{$k=0,1,2,\ldots$}
    \STATE Choose $r_k=\min\{\theta\,\operatorname{dist}(y_k,\partial\mathcal M_\mathrm{ex}),\rho\}$, and $\M_k = \overline{B_{r_k}(y_k)}$.
    \STATE Set \(\delta_k\triangleq\delta_+(r_k)\) and $L_{\log}^{\pm}(r_k)\triangleq \alpha_+(r_k)^3\left[\frac16 L_Rr_k^2 b_-(r_k)^3+\frac56 c_n\Lambda_0r_k b_-(r_k)^2c_-(r_k)\right].$
    \STATE Set $\tau_k= \max\left\{\frac{L_{\log}^{\pm}(r_k)\|\grad h(y_k)\|-2w_+(x)\delta_{\mathrm{ex}}}{\delta_{+}(r_k)}  + \frac{2\|\grad f(y_k)\|}{\delta_{+}(r_k) r_k}     +\eta_0,\ 1-2w_-(x)\delta_{\mathrm{ex}}   , 1 \right\}.$
     \STATE Compute $y_{k+1}$ according to one of the following cases:
    \STATE \textbf{Exact Method:} 
    \STATE \hspace{5mm} $y_{k+1} = \argmin_{y \in \mathcal{M}_k} \Phi_k(y) \triangleq  \Bigl\{ g(y) - \langle\grad h(y_k),\log_{y_k}(y)\rangle + \frac{\tau_k}{2}d^{2}(y_k, y)\Bigr\}$ \label{def:phi}
    \STATE  \textbf{Inexact Method:} 
    \STATE  Find $\hat{y} \in \M_k$ s.t. $\|\grad \Phi_{k}(\hat{y})\| \;\le\; \min\big(\epsilon_k, \zeta d(y_k,\hat y) \big)$, and $\Phi_k(\hat{y})\leq \Phi_{k}(y_k)$
    \STATE   \hspace{5mm} $y_{k+1} = \hat{y}$.
\ENDFOR
\end{algorithmic}
\end{algorithm}

\begin{theorem}\label{thm:accumulation}
Let \(\{(x_i,y_i)\}_{i=1}^m\) be \(m\) i.i.d. realizations of \((X,Y)\sim\mathcal P\), and let
Assumptions~\ref{assum:bounded_curv}--\ref{assum:rho} hold.
Fix \(x\in\mathbb R^q\) such that $w_-(x)<{(\rho_{\mathrm{ex}}-r)}/{(2r)}$. Assume that $y_0\in \operatorname{int}(\mathcal M_{\mathrm{ex}})$ and $f(y_0,x)<\min_{y\in\partial\mathcal M_{\mathrm{ex}}} f(y,x)$. Let \(\{y_k\}\) be the sequence generated by Algorithm~\ref{alg:rpdca}. Then:

\begin{enumerate}
    \item The sequence \(\{y_k\}\subset \mathcal M_{\mathrm{ex}}\)  has at least one accumulation point \(\bar y\in \mathcal M_{\mathrm{ex}}\)
    \item Every accumulation point \(\bar y\) is stationary for \(f(\cdot,x)\), i.e. $\grad f(\bar y,x)=0$.
    \item For every \(N\in\mathbb N\), the following estimates hold:
\end{enumerate}
    \begin{align}
        &\min_{0\le k\le N} d(y_k,y_{k+1})
        \le
        \sqrt{\frac{f(y_0,x)-f_*}{\kappa(N+1)}}, \quad  \kappa =
\begin{cases}
1/2, & \text{Exact Method}, \\
1/4, &  \text{Inexact Method}
\end{cases}
        \label{main:exact}        
    \end{align}
\end{theorem}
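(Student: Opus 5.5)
The proof rests on a per-iteration sufficient-decrease inequality
\[
f(y_{k+1},x)\le f(y_k,x)-\kappa\, d^2(y_k,y_{k+1}),
\]
from which all three claims follow by compactness and telescoping.

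\textbf{Step 1: majorization.} On $\M_k=\overline{B_{r_k}(y_k)}\subset\M_{\rm ex}$, the function $h$ is a nonnegative combination of squared distances. Along the geodesic from $y_k$ to $y$, a second-order Taylor expansion of $h\circ\exp_{y_k}$ gives a bound of the form
\[
-h(y)\le -h(y_k)-\langle\grad h(y_k),\log_{y_k}(y)\rangle+\tfrac12\,2w_-(x)\,\zeta_{\rm ex}\,d^2(y_k,y).
\]
The second-order term uses the upper Hessian bound of \cref{lem:sqdist_curv} applied on a ball of diameter at most $r+\rho_{\rm ex}$. If the expansion is performed in the normal chart rather than along the geodesic, the curvature term $L_{\log}^{\pm}(r_k)\|\grad h(y_k)\|$ is added; this term is controlled by Jacobi-field bounds through $\alpha_+,b_-,c_-$, $L_R$, and $c_n\Lambda_0$.

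Adding $g$ to both sides shows that $\Phi_k(y)-\tfrac{\tau_k}{2}d^2(y_k,y)+h(y_k)$ upper-bounds $f(y)$. Equivalently,
\[
f(y)\le \Phi_k(y)+h(y_k)-\tfrac{\tau_k}{2}d^2(y_k,y)+C_k\,d^2(y_k,y),
\]
where $C_k$ is absorbed by the choice of $\tau_k$.

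\textbf{Step 2: strong convexity and interiority of the subproblem.} The Hessian of $\Phi_k$ is bounded below by
\[
2w_+\delta_{\rm ex}+\tau_k\delta_+(r_k)-L_{\log}^{\pm}(r_k)\|\grad h(y_k)\|.
\]
By the first entry of $\tau_k$, this is at least $\eta_0\delta_+(r_k)>0$. Hence $\Phi_k$ is strongly geodesically convex on the strongly convex ball $\M_k$; this is where Assumption~\ref{assum:rho} and $r_k\le\rho$ enter.

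To rule out a boundary minimizer, note that $\grad\Phi_k(y_k)=\grad f(y_k)$. Strong convexity then gives
\[
\langle\grad\Phi_k(y),\log_{y_k}(y)/r_k\rangle\ge \mu_k r_k-\|\grad f(y_k)\|
\]
on $\partial\M_k$, which is positive by the term $2\|\grad f(y_k)\|/(\delta_+(r_k)r_k)$ in $\tau_k$. Therefore $y_{k+1}\in\operatorname{int}\M_k$ and $\grad\Phi_k(y_{k+1})=0$ in the exact case.

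\textbf{Step 3: descent.} For the exact method, compare $\Phi_k(y_{k+1})\le\Phi_k(y_k)=g(y_k)$. Combined with the majorization of Step 1 and $\tau_k\ge1$, this yields the decrease with $\kappa=1/2$.

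For the inexact method, $\Phi_k(\hat y)\le\Phi_k(y_k)$ holds by construction. The tolerance $\|\grad\Phi_k(\hat y)\|\le\zeta d(y_k,\hat y)$ with $\zeta=1/4$ costs at most a factor in the constant, giving $\kappa=1/4$.

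\textbf{Step 4: invariance and conclusions.} Descent gives $f(y_k)\le f(y_0)<\min_{\partial\M_{\rm ex}}f$. Since $y_{k+1}\in\M_k\subset\operatorname{int}\M_{\rm ex}$ (because $\theta<1$), the iterates stay in the interior and $r_k>0$. A continuity argument shows that $\operatorname{dist}(y_k,\partial\M_{\rm ex})$ is bounded below uniformly, because the sublevel set $\{f\le f(y_0)\}\cap\M_{\rm ex}$ is compact and disjoint from the boundary. This yields a uniform $\underline r>0$ with $r_k\ge\underline r$, and hence $\tau_k$ is bounded above.

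Compactness of $\M_{\rm ex}$ gives accumulation points, proving item~1. Telescoping the decrease gives
\[
\sum_{k}\kappa\, d^2(y_k,y_{k+1})\le f(y_0)-f_*.
\]
Bounding the minimum over $0\le k\le N$ by the average yields \eqref{main:exact}, proving item~3.

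For item~2, $d(y_k,y_{k+1})\to0$ by the summability above. The optimality condition
\[
\grad g(y_{k+1})-\mathcal T\grad h(y_k)-\tau_k\log_{y_{k+1}}(y_k)=\grad\Phi_k(y_{k+1})
\]
holds, where $\mathcal T$ denotes the differential of $\log_{y_k}$ transported back to $y_{k+1}$, and $\|\grad\Phi_k(y_{k+1})\|\le\epsilon_k\to0$ in the inexact case. Passing to a subsequence converging to $\bar y$, with $\tau_k$ bounded and $\mathcal T\to\mathrm{id}$, gives $\grad g(\bar y)-\grad h(\bar y)=0$.

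\textbf{Main obstacle.} The hardest step is Step 1, together with the uniform lower bound on $r_k$. Controlling the error between $\langle\grad h(y_k),\log_{y_k}(y)\rangle$ and the true first-order behaviour of $h$ requires the Jacobi-field estimate behind $L_{\log}^{\pm}$. I would first try to derive it by bounding the third derivative of $\tfrac12 d^2$ via $\nabla R$ along normal geodesics, and I expect the constant bookkeeping there to be the delicate part.
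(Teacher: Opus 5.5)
Your Steps 2 and 4 follow the paper's proof closely: the compact sublevel set $\mathcal S_0$ disjoint from $\partial\M_{\rm ex}$, strong convexity of $\Phi_k$ from the first entry of $\tau_k$, interiority from the $2\|\grad f(y_k)\|/(\delta_+(r_k)r_k)$ term, telescoping, and the residual argument with $\mathcal T\to\mathrm{id}$. However, the descent inequality in Steps 1 and 3, on which items 2 and 3 rest, does not go through. To bound $-h$ from above you need a \emph{lower} bound on $\operatorname{Hess}h$. The upper bound $\operatorname{Hess}h\le 2w_-(x)\zeta_{\rm ex}$ that you cite gives the opposite inequality. Even granting your Step 1, combining it with $\Phi_k(y_{k+1})\le\Phi_k(y_k)$ yields only
\[
f(y_k,x)-f(y_{k+1},x)\ge\Bigl(\frac{\tau_k}{2}-w_-(x)\zeta_{\rm ex}\Bigr)d_k^2 .
\]
So $\tau_k\ge1$ gives the factor $\frac12-w_-(x)\zeta_{\rm ex}$, not $\frac12$. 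This factor can be negative: in the flat case $\zeta_{\rm ex}=1$, and $\tau_k=1$ occurs when $\grad f(y_k)\approx0$ and $\eta_0<3$, so the factor is $\frac12-w_-(x)<0$ once $w_-(x)>1/2$. No entry of $\tau_k$ dominates $1+2w_-(x)\zeta_{\rm ex}$, so ``$C_k$ is absorbed by the choice of $\tau_k$'' is not available. Without monotone descent you lose $y_k\in\mathcal S_0$, the uniform $\underline r$, the bound on $\tau_k$, and $\sum_k d_k^2<\infty$. (Also, the majorant should contain $-h(y_k)$, not $+h(y_k)$.)

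The missing ingredient is the DC minorant property of $h$ itself. By the \emph{lower} bound of \cref{lem:gh_properties}, $\operatorname{Hess}h\ge 2w_-(x)\delta_{\rm ex}$ on $\M_{\rm ex}$. The geodesic from $y_k$ to $y_{k+1}$ lies in $\M_k\subset\M_{\rm ex}$, so
\[
h(y_{k+1})\ge h(y_k)+\langle\grad h(y_k),\log_{y_k}(y_{k+1})\rangle+w_-(x)\delta_{\rm ex}\,d_k^2 .
\]
Adding this to $\Phi_k(y_{k+1})\le\Phi_k(y_k)$ gives $f(y_k,x)-f(y_{k+1},x)\ge(\tau_k/2+w_-(x)\delta_{\rm ex})d_k^2$. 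The second entry $\tau_k\ge1-2w_-(x)\delta_{\rm ex}$ then turns this into $\kappa=\frac12$. Your argument never uses that entry, yet it is needed because $\delta_{\rm ex}<0$ is allowed when $r+\rho_{\rm ex}>\pi/(2\sqrt{\Lambda_+})$. Along the geodesic the first-order term is exactly $\langle\grad h(y_k),\log_{y_k}(y)\rangle$, so no Jacobi-field or $L_{\log}^{\pm}$ estimate enters the descent step. $L_{\log}^{\pm}$ is needed only for the strong convexity of $\Phi_k$, which you already handle in Step 2, so the ``main obstacle'' you identify is misplaced. For the inexact method the same argument works verbatim from $\Phi_k(\hat y)\le\Phi_k(y_k)$. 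The paper instead uses convexity of $\Phi_k$ and $\|\grad\Phi_k(y_{k+1})\|\le\zeta d_k$ to get $\Phi_k(y_k)-\Phi_k(y_{k+1})\ge-\zeta d_k^2$, which yields $\kappa=\frac14$.
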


The strict sublevel assumption on \(y_0\) ensures that all iterates remain in a compact subset of \(\operatorname{int}(\mathcal M)\), which in turn provides a uniform positive distance from the boundary and allows the proximal parameters \(\tau_k\) to be chosen uniformly bounded.

\section{Convergence Analysis }\label{sec:converg analysis}

The proof uses three ingredients: (i) Hessian control of the linearized concave part, (ii) strong convexity of proximal subproblems, and (iii) a decrease estimate yielding vanishing steps and stationarity of accumulation points.

\subsection{Subproblem Strong Convexity and Closed Iterations}

Next, we provide the local geometry needed by the proximal model. The first controls the Hessian of the linearized concave term
\(y\mapsto \langle \xi,\log_p(y)\rangle\), while the second gives lower and upper Hessian bounds for the positive and negative weighted squared-distance terms.

\begin{lemma}\label{lemma:bound_hess_lin}
Let Assumptions~\ref{assum:bounded_curv}, \ref{assum:bound_ex}, and \ref{assum:rho} hold. 
Fix \(p\in \operatorname{int}(\mathcal M_{\mathrm{ex}})\), \(\xi\in T_p\Omega\), and let $$0<r_p<\min\{\operatorname{dist}(p,\partial\mathcal M_{\mathrm{ex}}),\,\rho\}.$$ 
For \(y\in \overline{B_{r_p}(p)}\setminus\{p\}\), define $$\psi(y)\triangleq \langle \xi,\log_p(y)\rangle,$$ $\beta\triangleq  d(p,y)$. Then, for every \(v\in T_y\Omega\), set $\Lambda_0\triangleq\max\{\Lambda_+,\Lambda_-\}$ and
\begin{align*}
& \bigl|\operatorname{Hess}_y\psi(v,v)\bigr|
\le 
L_{\log}^{\pm}(\beta)\|\xi\|\,\|v\|^2, \\ \text{where} \ \ \  L_{\log}^{\pm}(\beta)
\triangleq & \ 
\alpha_+(\beta)^3 
\left[
\frac16 L_R \beta^2 b_-(\beta)^3
+
\frac56 c_n\Lambda_0 \beta\, b_-(\beta)^2 c_-(\beta)
\right].
\end{align*}
\end{lemma}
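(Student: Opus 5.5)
The Hessian of $\psi$ will be computed by writing $\log_p$ as the inverse of $\exp_p$ and expressing its second derivative through Jacobi fields along the radial geodesic. Write $u=\log_p(y)$, so $\|u\|=\beta$. Let $\sigma(s)=\exp_y(sv)$ be the geodesic through $y$ with $\dot\sigma(0)=v$; then $\operatorname{Hess}_y\psi(v,v)=\frac{d^2}{ds^2}\psi(\sigma(s))|_{s=0}$. Setting $U(s)=\log_p(\sigma(s))\in T_p\Omega$, a curve in a fixed vector space, we get $\operatorname{Hess}_y\psi(v,v)=\langle \xi,U''(0)\rangle_p$, and therefore $|\operatorname{Hess}_y\psi(v,v)|\le\|\xi\|\,\|U''(0)\|$. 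The task reduces to bounding $\|U''(0)\|\le L_{\log}^{\pm}(\beta)\|v\|^2$. Every point involved lies in $\overline{B_{r_p}(p)}\subset\mathcal M_{\mathrm{ex}}\cap \M_\rho$-scale, so by Assumptions~\ref{assum:bound_ex}--\ref{assum:rho} everything stays within the injectivity and conjugate-point-free region, and $\exp_p$ is a diffeomorphism there.

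The first step is to differentiate the identity $\exp_p(U(s))=\sigma(s)$. Once gives $d(\exp_p)_{U}[U']=\dot\sigma$. By the Jacobi-field representation in the definition above, $U'(0)=w$ is the initial derivative of the Jacobi field $J$ along $\gamma(t)=\exp_p(tu/\beta)$ with $J(0)=0$ and $J(\beta)=v$. Rauch comparison with $\sec\le\Lambda_+$ gives $\|J(\beta)\|\ge \|w\|\sin(\beta\sqrt{\Lambda_+})/\sqrt{\Lambda_+}$, so $\|w\|\le\alpha_+(\beta)\|v\|/\beta$, with the $1/\beta$ absorbed by the scaling $J(t)=d(\exp_p)_{tu}(tw)$. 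Thus $\|d(\exp_p)_u^{-1}\|\le\alpha_+(\beta)$. Differentiating a second time, using $\nabla_{\dot\sigma}\dot\sigma=0$, yields
\[
0=d(\exp_p)_U[U'']+D^2(\exp_p)_U[U',U'],
\]
which gives $\|U''\|\le\alpha_+(\beta)\,\|D^2(\exp_p)_u[w,w]\|$. Combined with $\|w\|\le\alpha_+(\beta)\|v\|$, this accounts for the factor $\alpha_+(\beta)^3$.

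The main obstacle is to estimate the second covariant derivative of $\exp_p$, namely $\|D^2(\exp_p)_u[w,w]\|$. I would represent it through the two-parameter variation $\Gamma(s,t)=\exp_p(t(u+sw))$. Differentiating the Jacobi equation in $s$ shows that $K=D_sJ_s|_{s=0}$ satisfies an inhomogeneous Jacobi equation
\[
D_t^2K+R(K,\dot\gamma)\dot\gamma=-(\nabla_J R)(J,\dot\gamma)\dot\gamma-R(J,D_tJ)\dot\gamma-\dots ,
\]
where the omitted terms are of the form $R(\cdot,\cdot)\cdot$ with two factors among $J$, $D_tJ$ and one $\dot\gamma$. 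The initial data are $K(0)=0$ and $D_tK(0)=0$, and $D^2(\exp_p)_u[w,w]$ corresponds to $K(1)$ after normalizing the geodesic to unit time.

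Along the way I bound $\|J(t)\|\le b_-(t)\,t\|w\|$ and $\|D_tJ(t)\|\le c_-(t)\|w\|$ by Rauch comparison with $\sec\ge-\Lambda_-$, and use $\|\nabla R\|\le L_R$ and $\|R\|\le c_n\Lambda_0$. I then solve the forced equation by variation of constants against the comparison kernel $\sinh(\sqrt{\Lambda_-}(\beta-t))/\sqrt{\Lambda_-}$. The $\nabla R$ term carries $\|J\|^2\|\dot\gamma\|^2$ and integrates to a contribution $\tfrac16L_R\beta^2b_-^3$; the constant $\tfrac16$ comes from $\int_0^\beta t^2(\beta-t)\,dt$-type integrals. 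The $R$ terms give $\tfrac56c_n\Lambda_0\beta\,b_-^2c_-$, collecting the coefficients $\tfrac12+\tfrac13$ from the distinct terms. I expect the careful bookkeeping of these forcing terms and kernel integrals, including monotonicity of $b_-$ and $c_-$ in $t$, to be the delicate part. Combining this estimate with the factor $\alpha_+(\beta)^3$ from the first step gives the stated $L_{\log}^{\pm}(\beta)$.
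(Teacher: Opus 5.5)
Your proposal follows the paper's proof essentially step for step. The main line is the same: you write $\operatorname{Hess}_y\psi(v,v)=\langle\xi,U''(0)\rangle$ and differentiate $\exp_p(U(s))=\sigma(s)$ twice. You bound $\|(d\exp_p)_u^{-1}\|$ by $\alpha_+(\beta)$ via upper-curvature Jacobi comparison. You then bound the second derivative of $\exp_p$ through the inhomogeneous Jacobi equation of the variation $\exp_p(t(u+sw))$, using lower-curvature bounds on $J$ and $D_tJ$ and a Duhamel representation whose kernel (the true Jacobi propagator) is bounded by the $\sinh$ comparison.

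There is one bookkeeping slip. Differentiating $D_t^2J+R(J,\dot\gamma)\dot\gamma=0$ in $s$ also produces the forcing term $(\nabla_{\dot\gamma}R)(J,\dot\gamma)J$, coming from $D_t[R(J,\dot\gamma)J]$, so there are two $\nabla R$ terms. Each contributes $\tfrac1{12}$ via $\int_0^1 s^2(1-s)\,ds$, and only their sum gives the coefficient $\tfrac16 L_R\beta^2 b_-(\beta)^3$; the single term in your list would give just $\tfrac1{12}$. The five $R$-terms each contribute $\tfrac16$ via $\int_0^1 s(1-s)\,ds$, which is consistent with your $\tfrac12+\tfrac13$.
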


\begin{proof}
Let \(u\triangleq \log_p(y)\), so \(\beta=\|u\|=d(p,y)\le r_p\le \rho\). By Lemma~\ref{lem:prox_local_convexity},
\(\overline{B_{r_p}(p)}\) is a strongly convex normal ball. Hence \(A_u\triangleq (d\exp_p)_u\) is invertible,
the geodesic \(\eta(\tau)\triangleq \exp_p(\tau u)\), \(\tau\in[0,1]\), is the unique minimizing geodesic from \(p\) to \(y\),
and \(\eta([0,1])\subset \overline{B_{r_p}(p)}\subset \mathcal M_{\mathrm{ex}}\). Thus along \(\eta\) we  use
\(\|R\|\le c_n\Lambda_0\) and \(\|\nabla R\|\le L_R\).
Let \(\gamma:(-\varepsilon,\varepsilon)\to \Omega\) be the geodesic with \(\gamma(0)=y\) and \(\dot\gamma(0)=v\), and define
\(Y(t)\triangleq \log_p(\gamma(t))\). Since \(\exp_p(Y(t))=\gamma(t)\), differentiation gives $A_{Y(t)}Y'(t)=\dot\gamma(t)$. Differentiating covariantly in \(t\) and evaluating at \(t=0\) yields
\[
(\nabla_{Y'(0)}A)_uY'(0)+A_uY''(0)=0,
\]
because \(\gamma\) is geodesic. Since \(\psi(\gamma(t))=\langle \xi,Y(t)\rangle\),
\[
\operatorname{Hess}_y\psi(v,v)=\frac{d^2}{dt^2}\Big|_{t=0}\psi(\gamma(t))
=\langle \xi,Y''(0)\rangle,
\]
\[
\bigl|\operatorname{Hess}_y\psi(v,v)\bigr|
\le \|\xi\|\,\|A_u^{-1}\|\,\|(\nabla_{Y'(0)}A)_uY'(0)\|.
\]

We first show \(\|A_u^{-1}\|\le \alpha_{+}(\beta)\). For \(w\in T_p\Omega\), decompose
\(w=w^\parallel+w^\perp\), where \(w^\parallel\in \mathbb Ru\) and \(w^\perp\perp u\).
The radial part is preserved: \(\|A_u(w^\parallel)\|=\|w^\parallel\|\).
For the orthogonal part, let \(\widehat u=u/\beta\), \(\bar\eta(s)\triangleq \exp_p(s\widehat u)\), \(0\le s\le \beta\), and
$\widetilde J(s)\triangleq (d\exp_p)_{s\widehat u}\!\big(s\,{w^\perp}/{\beta}\big).
$
Then \(\widetilde J\) is an orthogonal Jacobi field along the unit-speed geodesic \(\bar\eta\) with
\(\widetilde J(0)=0\),\newline \(D_s\widetilde J(0)=w^\perp/\beta\), and \(\widetilde J(\beta)=A_u(w^\perp)\).
By the metric comparison theorem~\cite[Thm.~11.10]{lee2018introduction} with the upper sectional-curvature bound \(\Lambda_+\) gives
\[
\|A_u(w^\perp)\|=\|\widetilde J(\beta)\|
\ge ({\sin(\sqrt{\Lambda_+}\beta)}/{(\sqrt{\Lambda_+}\beta)})\,\|w^\perp\|
=\alpha_{+}(\beta)^{-1}\|w^\perp\|.
\]
By Gauss' lemma~\cite[Thm.~6.9]{lee2018introduction}, \(A_u(w^\parallel)\perp A_u(w^\perp)\), hence \(\|A_uw\|\ge \alpha_{+}(\beta)^{-1}\|w\|\),~so
\[
\|A_u^{-1}\|\le \alpha_{+}(\beta),
\qquad
\|Y'(0)\|=\|A_u^{-1}v\|\le \alpha_{+}(\beta)\|v\|.
\]

Next, we estimate \((\nabla_\zeta A)_u w\). Fix \(\zeta,w\in T_p\Omega\), set
$$s_\varepsilon \triangleq  u+\varepsilon\zeta, \ \ 
\eta_\varepsilon(\tau)\triangleq \exp_p(\tau s_\varepsilon), \ \ 
\text{and} \ \ J^\varepsilon(\tau)\triangleq (d\exp_p)_{\tau s_\varepsilon}(\tau w).$$
Let \(\eta=\eta_0\), \(U\triangleq \partial_\tau\eta_\varepsilon|_{\varepsilon=0}=\dot\eta\),
\(V\triangleq \partial_\varepsilon\eta_\varepsilon|_{\varepsilon=0}\),
\(J\triangleq  J^0\), and \(Z\triangleq  d_\varepsilon J^\varepsilon|_{\varepsilon=0}\).
Since \(J^\varepsilon(1)=A_{u+\varepsilon\zeta}w\), we have $Z(1)=(\nabla_\zeta A)_u w$.

Also \(Z(0)=0\), \(D_\tau Z(0)=0\), and differentiating the Jacobi equation for \(J^\varepsilon\) gives $$D_\tau^2 Z+R(Z,U)U=-F$$ where
\begin{align*}
F&=(\nabla_U R)(V,U)J+(\nabla_V R)(J,U)U +R(D_\tau V,U)J \\
&\quad +2R(V,U)D_\tau J+R(J,D_\tau V)U+R(J,U)D_\tau V.
\end{align*}

We now replace the nonnegative-curvature Jacobi estimates by the
corresponding hyperbolic comparison estimates.  Let \(W\) be any Jacobi
field along \(\eta\) with \(W(0)=0\) and \(D_\tau W(0)=a\).  Write the same
field in the unit-speed parameter \(s=\beta\tau\) as $\widehat W(s)\triangleq W(s/\beta)$. Then \(\widehat W(0)=0\) and $D_s\widehat W(0)={a}/{\beta}$. By Rauch comparison~\cite[Thm.~11.9]{lee2018introduction} under the lower sectional-curvature bound $\sec_\Omega \ge -\Lambda_-$, we have
\[
    \|\widehat W(s)\|
    \le
    s\, b_-(s)\left\|{a}/{\beta}\right\|,
    \qquad
    \|D_s\widehat W(s)\|
    \le
    c_-(s)\left\|{a}/{\beta}\right\|.
\]
Returning to \(s=\beta\tau\), and using the monotonicity of \(b_-\) and
\(c_-\), gives
\[
    \|W(\tau)\|
    \le
    \tau b_-(\beta\tau)\|a\|
    \le
    \tau b_-(\beta)\|a\|,
\]
\[
    \|D_\tau W(\tau)\|
    =
    \beta\|D_s\widehat W(\beta\tau)\|
    \le
    c_-(\beta\tau)\|a\|
    \le
    c_-(\beta)\|a\|.
\]
Applying these estimates to \(J\) and \(V\), and using \(\|U\|=\beta\), we obtain
\[
    \|J(\tau)\|\le \tau b_-(\beta)\|w\|,
    \ 
    \|D_\tau J(\tau)\|\le c_-(\beta)\|w\|,
    \
    \|V(\tau)\|\le \tau b_-(\beta)\|\xi\|,
    \
    \|D_\tau V(\tau)\|\le c_-(\beta)\|\xi\|.
\]
and therefore
$\|F(\tau)\|
    \le
    \left(
    2L_R\beta^2\tau^2 b_-(\beta)^2
    +
    5c_n\Lambda_0\beta\tau b_-(\beta)c_-(\beta)
    \right)
    \|\xi\|\,\|w\|$.

For \(s\in[0,1]\) and \(a\in T_{\eta(s)}\Omega\), let \(G_s^a\) be the Jacobi field on \([s,1]\) solving
\[
D_\tau^2 G_s^a+R(G_s^a,U)U=0,\qquad G_s^a(s)=0,\qquad D_\tau G_s^a(s)=a.
\]
By the previous estimate, \(\|G_s^a(1)\|\le (1-s)b_{-}(\beta(1-s))\|a\|\leq(1-s)b_{-}(\beta)\|a\|.\)\  Define $$\widetilde Z(\tau)\triangleq -\int_0^\tau G_s^{F(s)}(\tau)\,ds.$$ A direct differentiation under the integral sign shows that \(\widetilde Z\) satisfies the same inhomogeneous Jacobi equation and initial conditions as \(Z\), hence \(\widetilde Z=Z\). Thus,
\[
\|Z(1)\|
\le b_{-}(\beta)\int_0^1 (1-s)\|F(s)\|\,ds
\le
\left(\frac16L_R\beta^2b_{-}(\beta)+\frac56c_n\Lambda_0\beta b_-(\beta)^2c_-(\beta)\right)\|\zeta\|\,\|w\|.
\]
Since \(Z(1)=(\nabla_\zeta A)_u w\), we have proved
\[
\|(\nabla_\zeta A)_u w\|
\le
\left(\frac16L_R\beta^2b_{-}(\beta)+\frac56c_n\Lambda_0\beta b_-(\beta)^2c_-(\beta)\right)\|\zeta\|\,\|w\|.
\]

Finally choose \(\zeta=w=Y'(0)\). Then combined with \(\|Y'(0)\|\le \alpha_{+}(\beta)\|v\|\), 
\begin{align*}
    \|Y''(0)\|&\le
\|A_u^{-1}\|\,\|(\nabla_{Y'(0)}A)_uY'(0)\|\\
&\le
\alpha_{+}(\beta)
\left(\frac16L_R\beta^2b_{-}(\beta)+\frac56c_n\Lambda_0\beta b_-(\beta)^2c_-(\beta)\right)\|Y'(0)\|^2.\\
&\le \alpha_{+}(\beta)^3
\left(\frac16L_R\beta^2b_{-}(\beta)+\frac56c_n\Lambda_0\beta b_-(\beta)^2c_-(\beta)\right)\|v\|^2
\end{align*}
Thus $\bigl|\operatorname{Hess}_y\psi(v,v)\bigr|{\le}L_{\log}^{\pm}(\beta)\|\xi\|\,\|v\|^2$
\end{proof}

The next lemma provides a global lower bound on the Hessian of $ g$ and smoothness estimates for $g$ and $h$. This lower bound need not be positive, so positivity in the proximal model will be recovered from the local proximal term on $\overline{B_{r_k}(y_k)}$.

\begin{lemma}\label{lem:gh_properties}
Let Assumptions~\ref{assum:bounded_curv} and~\ref{assum:bound_y} hold.
For every \(z\in \mathcal M_r\), \(y\in \mathcal M_{\mathrm{ex}}\), and \(v\in T_y\Omega\), yields $\delta_{\mathrm{ex}}\|v\|^2
\le
\operatorname{Hess}_y\!\big( (1/2) d^2(z,y)\big)(v,v)
\le
\zeta_{\mathrm{ex}}\|v\|^2$,
and therefore\[2w_+(x)\delta_{\mathrm{ex}}\|v\|^2
\le
\operatorname{Hess}_y g(v,v)
\le
2w_+(x)\zeta_{\mathrm{ex}}\|v\|^2,
\]
\[
2w_-(x)\delta_{\mathrm{ex}}\|v\|^2
\le
\operatorname{Hess}_y h(v,v)
\le
2w_-(x)\zeta_{\mathrm{ex}}\|v\|^2.
\]
\end{lemma}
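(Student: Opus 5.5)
The plan is to reduce the statement to Lemma~\ref{lem:sqdist_curv} applied on a suitable geodesically convex set of diameter at most \(D\triangleq r+\rho_{\mathrm{ex}}\). Afterwards, summing over the positive- and negative-weight terms gives the bounds for \(g\) and \(h\).

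\textbf{Step 1: the pointwise bounds.} Fix \(z\in\mathcal M_r\) and \(y\in\mathcal M_{\mathrm{ex}}\). Then \(d(z,y)\le r+\rho_{\mathrm{ex}}<\iota_{r,c}\le\operatorname{inj}(z)\), so \(y\) lies inside the injectivity domain of \(z\). Hence \(\frac12 d^2(z,\cdot)\) is smooth near \(y\), and its Hessian can be written in terms of the Jacobi fields along the unique minimizing geodesic from \(z\) to \(y\). That geodesic has length \(\ell=d(z,y)\le D\), and by Assumption~\ref{assum:bound_ex} we have \(D<\pi/\sqrt{\Lambda_+}\).

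Rather than invoking Lemma~\ref{lem:sqdist_curv} as a black box (its convex-set hypothesis is awkward here), I will rerun its standard proof along this single geodesic. Write \(v=v^\parallel+v^\perp\). The radial part contributes exactly \(\|v^\parallel\|^2\). The tangential part contributes the index form of the Jacobi field \(J\) with \(J(0)=0\) and \(J(\ell)=v^\perp\), namely \(\langle D_tJ(\ell),J(\ell)\rangle\cdot\ell\) after normalization. Hessian comparison (Rauch, \cite[Thm.~11.7/11.9]{lee2018introduction}) under \(-\Lambda_-\le\sec\le\Lambda_+\) then gives
\[
\ell\sqrt{\Lambda_+}\cot(\ell\sqrt{\Lambda_+})\,\|v^\perp\|^2\le \operatorname{Hess}\bigl(\tfrac12 d^2\bigr)(v^\perp,v^\perp)\le \ell\sqrt{\Lambda_-}\coth(\ell\sqrt{\Lambda_-})\,\|v^\perp\|^2 .
\]
The cross terms vanish by the Gauss lemma.

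\textbf{Step 2: uniformity.} The function \(t\mapsto\delta_+(t)\) is decreasing and \(t\mapsto\zeta_-(t)\) is increasing, and both are \(\le 1\), respectively \(\ge 1\). So replacing \(\ell\) by \(D\) gives \(\delta_{\mathrm{ex}}\|v\|^2\le\operatorname{Hess}\le\zeta_{\mathrm{ex}}\|v\|^2\), with the radial coefficient \(1\) also sandwiched. Note that \(\delta_{\mathrm{ex}}\) may be negative when \(D>\pi/(2\sqrt{\Lambda_+})\). This is consistent with the remark that the lower bound "need not be positive", so no condition of the form \(D<\pi/(2\sqrt{\Lambda_+})\) is needed; only \(D<\pi/\sqrt{\Lambda_+}\) is required, for conjugate-point-freeness.

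\textbf{Step 3: summation.} Write \(g=\sum_{w_i\ge0}2w_i\cdot\frac12 d^2(\cdot,y_i)\). Since every \(y_i\in\mathcal M_r\) almost surely, Step 2 applies to each term. Multiplying by the nonnegative coefficients \(2w_i\) preserves both inequalities, and summing gives \(2w_+\delta_{\mathrm{ex}}\) and \(2w_+\zeta_{\mathrm{ex}}\). The argument for \(h\) is identical, using the coefficients \(-2w_j>0\) and the total \(w_-\).

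\textbf{Main obstacle.} The delicate point is Step 1. The Hessian comparison must be justified along one geodesic, free of conjugate points, whose length may exceed the convexity radius, instead of through the convex-set hypothesis of Lemma~\ref{lem:sqdist_curv}. I would handle it by citing the pointwise Hessian comparison theorem for the distance function within the injectivity radius, which needs only the absence of conjugate points for \(\ell<\pi/\sqrt{\Lambda_+}\). The curvature bounds only need to hold along the geodesic, and this geodesic stays in \(B_{r+\rho_{\mathrm{ex}}}(c)\).
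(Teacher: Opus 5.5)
Your argument is correct and is essentially the paper's proof: a pointwise Hessian comparison at $\ell=d(z,y)<\operatorname{inj}(z)$, then monotonicity of $\delta_+$ and $\zeta_-$ to replace $\ell$ by $r+\rho_{\mathrm{ex}}$, then termwise summation with the nonnegative coefficients $2w_i$ and $-2w_j$. The only difference is that you apply the pointwise comparison theorem in the normal ball of $z$ directly rather than citing Lemma~\ref{lem:sqdist_curv}, which is the more careful choice because Assumption~\ref{assum:bound_ex} only gives $r+\rho_{\mathrm{ex}}<\pi/\sqrt{\Lambda_+}$, not the $\pi/(2\sqrt{\Lambda_+})$ that lemma requires; you should also treat the case $y=z$ separately (there the Hessian is the identity), as the paper does.
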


\begin{proof}
If \(y=z\), then $\operatorname{Hess}_y\!\big(\frac12 d^2(z,\cdot)\big)(v,v)=\|v\|^2$, so the claim is immediate. Assume henceforth that \(y\neq z\). Define \(\rho_z(y)\triangleq  d(z,y)\). Then
$\rho_z(y)\le d(z,c)+d(c,y)\le r+\rho_{\mathrm{ex}}$.
Moreover, \(r+\rho_{\mathrm{ex}}<\operatorname{inj}_{\Omega}(z)\), so
\(\rho_z\) is smooth at \(y\).
By Lemma~\ref{lem:sqdist_curv}
\[
    \delta_+(\rho_z(y))\|v\|_y^2
    \le
    \operatorname{Hess}_{y}\!\left(\frac12 d^2(z,y)\right)[v,v]
    \le
    \zeta_-(\rho_z(y))\|v\|_y^2 .
\]

Since \(\delta_+\) is decreasing, \(\zeta_-\) is increasing, and \(\rho_z(y)\le r+\rho_{\mathrm{ex}}\),
\[
    \delta_+(\rho_z(y))
    \ge
    \delta_+(r+\rho_{\mathrm{ex}})
    =
    \delta_{\mathrm{ex}},
\quad
    \zeta_-(\rho_z(y))
    \le
    \zeta_-(r+\rho_{\mathrm{ex}})
    =
    \zeta_{\mathrm{ex}} .
\]
Summing the inequalities termwise gives the bounds for \(g\) and \(h\).
\end{proof}

\begin{lemma}\label{lem:prox_local_convexity}
Let Assumptions~\ref{assum:bounded_curv}, \ref{assum:bound_ex}, and \ref{assum:rho} hold. Fix \(p\in \operatorname{int}(\mathcal M_{\mathrm{ex}})\) and let
\[
0<r_p<\min\{\operatorname{dist}(p,\partial\mathcal M_{\mathrm{ex}}),\,\rho\}.
\]
Then \(B_{r_p}(p)\) is a strongly convex normal ball. Moreover, \(y\mapsto \frac12 d^2(p,y)\) is \(\delta_{+}(r_p)\)-strongly geodesically convex on
\(\overline{B_{r_p}(p)}\).
\end{lemma}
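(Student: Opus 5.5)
The argument has two parts. First we show that $B_{r_p}(p)$ is a strongly convex normal ball, by reducing to the same criterion already used for $\mathcal M_\rho$ in the discussion after Assumption~\ref{assum:rho}. Then we obtain strong convexity of $\frac12 d^2(p,\cdot)$ from Lemma~\ref{lem:sqdist_curv}, applied with $\mathcal X=\overline{B_{r_p}(p)}$ and $D=r_p$ (rather than the diameter $2r_p$). The $\delta_+(r_p)$ constant in the statement comes from the radial distance $d(p,y)\le r_p$.

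\textbf{Step 1 (containment and injectivity margin).} Since $r_p<\operatorname{dist}(p,\partial\mathcal M_{\mathrm{ex}})$, the triangle inequality gives $\overline{B_{r_p}(p)}\subset\mathcal M_{\mathrm{ex}}$. For the injectivity estimate I would use that $\iota_{\rho_{\mathrm{ex}},c}$ is (by its role in Assumption~\ref{assum:rho}) a lower bound for $\operatorname{inj}_\Omega(q)$ over $q\in\mathcal M_{\mathrm{ex}}$. Then for all $q\in\overline{B_{r_p}(p)}$ we get $2r_p<2\rho<\iota_{\rho_{\mathrm{ex}},c}\le\operatorname{inj}_\Omega(q)$. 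We also have $r_p<\rho<\pi/(2\sqrt{\Lambda_+})$. These are exactly the hypotheses of \cite[Lemma~3.2.1]{wintraecken2015ambient}, invoked verbatim as for $\mathcal M_\rho$, so $B_{r_p}(p)$, and likewise its closure, is a strongly convex normal ball. In particular, $\log_p$ is defined on it and any two points are joined by a unique minimizing geodesic lying inside.

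\textbf{Step 2 (Hessian bound).} For $y\in\overline{B_{r_p}(p)}$ with $y\ne p$, the function $\frac12 d^2(p,\cdot)$ is smooth near $y$ because $d(p,y)\le r_p<\operatorname{inj}(p)$. The Hessian comparison underlying Lemma~\ref{lem:sqdist_curv}, in the same pointwise form used in the proof of Lemma~\ref{lem:gh_properties}, gives
\[
\operatorname{Hess}_y\big(\tfrac12 d^2(p,\cdot)\big)[v,v]\ge\delta_+(d(p,y))\|v\|^2\ge\delta_+(r_p)\|v\|^2 ,
\]
where the last inequality uses that $\delta_+$ is decreasing and that $r_p<\pi/(2\sqrt{\Lambda_+})$ keeps $\delta_+(r_p)>0$. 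At $y=p$ the Hessian is the identity, and $\delta_+(r_p)\le 1$.

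\textbf{Step 3 (integration along geodesics).} Take $x,y\in\overline{B_{r_p}(p)}$ and the unique minimizing geodesic $\gamma$ between them, which stays in the ball by Step 1. Integrating the Hessian bound twice along $\gamma$ (Taylor expansion with integral remainder) gives
\[
F(y)\ge F(x)+\langle\grad F(x),\log_x y\rangle+\tfrac{\delta_+(r_p)}2 d^2(x,y),
\]
with $F=\frac12 d^2(p,\cdot)$. This is the definition of $\delta_+(r_p)$-strong geodesic convexity.

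The main obstacle is Step 1. One must make sure that the injectivity bound $\iota_{\rho_{\mathrm{ex}},c}$ genuinely controls $\operatorname{inj}$ at every point of $\overline{B_{r_p}(p)}$, not only near $c$. One must also handle the closed ball, which touches radius $r_p$. If this is unclear, I would instead replace $r_p$ by $r_p+\varepsilon$, which is still below the strict bounds, conclude for the open ball of that radius, and then restrict to the closure. The curvature part of the argument is routine.
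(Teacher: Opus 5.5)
Your proposal is correct and follows essentially the same route as the paper: Wintraecken's Lemma~3.2.1 gives the strongly convex normal ball, the pointwise Hessian comparison $\operatorname{Hess}_y(\tfrac12 d^2(p,\cdot))\ge\delta_+(d(p,y))\ge\delta_+(r_p)$ (with the identity case at $y=p$) gives the curvature bound, and integrating along minimizing geodesics inside the ball gives strong convexity. One wording point: your overview says you apply Lemma~\ref{lem:sqdist_curv} with $D=r_p$, which that lemma does not allow since it requires $\operatorname{diam}(\mathcal X)\le D$ and the ball has diameter $2r_p$; what Step~2 actually uses is the fixed-center comparison, as in the paper's appeal to \cite[Thm.~11.7]{lee2018introduction}, and that is the correct tool.
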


\begin{proof}
Since \(p\in \mathcal M_{\mathrm{ex}}\), \(\operatorname{inj}_\Omega(p)\ge \iota_{\rho_{\mathrm{ex}},c}\), and \(r_p<\operatorname{dist}(p,\partial\mathcal M_{\mathrm{ex}})\), we have \(\overline{B_{r_p}(p)}\subset \mathcal M_{\mathrm{ex}}\), and \(B_{r_p}(p)\) is a strongly convex normal ball by \cite[Lem.~3.2.1]{wintraecken2015ambient}.

Now fix \(y\in \overline{B_{r_p}(p)}\). If \(y=p\), then $
\operatorname{Hess}_p\!\big( (1/2) d^2(p,\cdot)\big)(v,v)=\|v\|^2\ge \delta_{+}(r_p)\|v\|^2$,
since \(\delta_{+}(r_p)\le 1\). If \(y\neq p\), define \(\ell=d(p,y)\). Then \(\ell\le r_p<\pi/(2\sqrt{\Lambda_+})\), and the Hessian comparison theorem for the distance function
\cite[Thm.~11.7]{lee2018introduction} gives
\[
\operatorname{Hess}_y\!\big( (1/2) d^2(z,y)\big)(v,v)\ge \delta_{+}(\ell)\|v\|^2. 
\]
Since \(\delta_{+}\) is decreasing and \(\ell\le r_p\), $\delta_{+}(\ell)\ge \delta_{+}(r_p)$, hence $\operatorname{Hess}_y\!\big( (1/2) d^2(p,y)\big)(v,v)\ge \delta_{+}(r_p)\|v\|^2$. 

Finally, let \(\gamma:[0,1]\to \overline{B_{r_p}(p)}\) be any minimizing geodesic. Because
\(B_{r_p}(p)\) is strongly convex, \(\gamma([0,1])\subset B_{r_p}(p)\). Therefore
\[
\frac{d^2}{dt^2}\big( (1/2) d^2(p,\gamma(t))\big)
=
\operatorname{Hess}_{\gamma(t)}\!\big( (1/2) d^2(p,\cdot)\big)(\dot\gamma(t),\dot\gamma(t))
\ge
\delta_{+}(r_p)\|\dot\gamma(t)\|^2.
\]
The standard one-dimensional characterization of strong convexity along geodesics now yields
\(\delta_{+}(r_p)\)-strong geodesic convexity on \(\overline{B_{r_p}(p)}\).
\end{proof}

The next proposition deduces that the chosen value of $\tau$ ensures that $\Phi_k$ is strongly convex by combining the Hessian bounds obtained above for each term.

\begin{proposition}\label{prop:Phi_p_strongly_convex}
Let Assumptions~\ref{assum:bounded_curv}, \ref{assum:bound_y},
\ref{assum:bound_ex}, and \ref{assum:rho} hold. Fix $p\in \operatorname{int}(\mathcal M_{\mathrm{ex}})$, and let $$0{<}r_p{<}\min\{\operatorname{dist}(p,\partial\mathcal M_{\mathrm{ex}}),\,\rho\},$$ and $$\tau>
({L_{\log}^{\pm}(r_p)\|\grad h(p)\|{-}2w_+(x)\delta_{\mathrm{ex}}})/{\delta_{+}(r_p)},$$
$$L_{\log}^{\pm}(r_k)\triangleq \alpha_+(r_k)^3\left[\frac16 L_Rr_k^2 b_-(r_k)^3+\frac56 c_n\Lambda_0r_k b_-(r_k)^2c_-(r_k)\right], \ \ 0\leq t\leq\rho,$$
then $\Phi_p(y)$ is \(\mu(p,\tau)\)-strongly geodesically convex on \(\overline{B_{r_p}(p)}\), where
\[
\mu(p,\tau) \triangleq 
2w_+(x)\delta_{\mathrm{ex}}
+\tau\delta_{+}(r_p)
-L_{\log}^{\pm}(r_p)\|\grad h(p)\|>0.
\]
\end{proposition}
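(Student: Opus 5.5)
The plan is to prove strong convexity by a Hessian lower bound along geodesics inside the ball. Write \(\Phi_p(y)=g(y)-\psi(y)+\frac{\tau}{2}d^2(p,y)\) with \(\psi(y)\triangleq\langle\grad h(p),\log_p(y)\rangle\). By Lemma~\ref{lem:prox_local_convexity}, \(\overline{B_{r_p}(p)}\) is a strongly convex normal ball contained in \(\mathcal M_{\mathrm{ex}}\). Hence \(\log_p\) is smooth there and every pair of points is joined by a unique minimizing geodesic lying in the ball. So \(\Phi_p\) is \(C^2\) on the ball, and it suffices to show \(\operatorname{Hess}_y\Phi_p(v,v)\ge\mu(p,\tau)\|v\|^2\) for all \(y\) in the ball and \(v\in T_y\Omega\). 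The one-dimensional argument at the end of the proof of Lemma~\ref{lem:prox_local_convexity} then gives \(\mu(p,\tau)\)-strong geodesic convexity in the sense of the definition: integrate \(\frac{d^2}{dt^2}\Phi_p(\gamma(t))\ge\mu\|\dot\gamma\|^2\) twice along the geodesic from \(x\) to \(y\), using \(\frac{d}{dt}\Phi_p(\gamma(t))|_{t=0}=\langle\grad\Phi_p(x),\log_x y\rangle\).

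The three Hessian terms are bounded separately. First, Lemma~\ref{lem:gh_properties} gives \(\operatorname{Hess}_y g(v,v)\ge 2w_+(x)\delta_{\mathrm{ex}}\|v\|^2\) for all \(y\in\mathcal M_{\mathrm{ex}}\supset\overline{B_{r_p}(p)}\). Second, Lemma~\ref{lem:prox_local_convexity} gives \(\operatorname{Hess}_y\frac12 d^2(p,y)(v,v)\ge\delta_+(r_p)\|v\|^2\); this includes \(y=p\), since \(\delta_+(r_p)\le1\). Third, for \(y\neq p\), Lemma~\ref{lemma:bound_hess_lin} with \(\xi=\grad h(p)\) gives \(|\operatorname{Hess}_y\psi(v,v)|\le L_{\log}^{\pm}(\beta)\|\grad h(p)\|\|v\|^2\) with \(\beta=d(p,y)\le r_p\). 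At \(y=p\), the map \(\log_p\) has differential equal to the identity and \(\nabla A\) vanishes at the origin (the bound \(\|Z(1)\|\) is \(0\) when \(\beta=0\)), so \(\operatorname{Hess}_p\psi=0\). The same conclusion follows by continuity of the Hessian, since \(L_{\log}^{\pm}(\beta)\to0\).

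The remaining step is monotonicity. I need \(L_{\log}^{\pm}(\beta)\le L_{\log}^{\pm}(r_p)\) for \(\beta\le r_p\). This holds because each factor \(\alpha_+,b_-,c_-\) is nondecreasing on \([0,\rho]\), as is each power \(\beta,\beta^2\). For \(\alpha_+(t)=t\sqrt{\Lambda_+}/\sin(t\sqrt{\Lambda_+})\), this uses \(t\sqrt{\Lambda_+}<\pi/2\) from Assumption~\ref{assum:rho}; the other two factors are increasing for all \(t\ge0\). I expect this step to be the only delicate point. One must also check that the exponent on \(b_-\) in the lemma's bound (the paper's proof yields \(b_-\) to the first power in one term) is dominated by \(b_-^3\), which holds since \(b_-\ge1\).

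Summing the three bounds gives \(\operatorname{Hess}_y\Phi_p(v,v)\ge\bigl(2w_+(x)\delta_{\mathrm{ex}}+\tau\delta_+(r_p)-L_{\log}^{\pm}(r_p)\|\grad h(p)\|\bigr)\|v\|^2=\mu(p,\tau)\|v\|^2\). Finally, \(\mu(p,\tau)>0\) is just the stated lower bound on \(\tau\) rearranged, after multiplying by \(\delta_+(r_p)>0\); this positivity holds because \(r_p<\pi/(2\sqrt{\Lambda_+})\).
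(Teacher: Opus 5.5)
Your proposal is correct and follows the paper's proof essentially verbatim. You use the same three-term Hessian split, bounded by Lemma~\ref{lem:gh_properties}, by Lemma~\ref{lemma:bound_hess_lin} together with monotonicity of $L_{\log}^{\pm}$, and by Lemma~\ref{lem:prox_local_convexity}, and then pass from a Hessian lower bound to strong convexity on the strongly convex normal ball. Your separate treatment of $y=p$ (where $\operatorname{Hess}_p\psi=0$ because $\log_p(\exp_p(tv))=tv$) and your check that $\alpha_+,b_-,c_-$ are nondecreasing fill in points the paper only asserts.

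One caveat applies equally to your argument and the paper's. Scaling the bound $\operatorname{Hess}_y\tfrac12 d^2(p,\cdot)\ge\delta_+(r_p)$ by $\tau$ requires $\tau\ge 0$, and the stated lower bound on $\tau$ does not force this: it permits negative $\tau$ whenever $2w_+(x)\delta_{\mathrm{ex}}>L_{\log}^{\pm}(r_p)\|\grad h(p)\|$. So $\tau\ge 0$ should be added as a hypothesis, which is harmless for FRIDA since $\tau_k\ge 1$ there.
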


\begin{proof}
Let \(y\in \overline{B_{r_p}(p)}\) and \(v\in T_y\Omega\). Then
\[
\operatorname{Hess}_y\Phi_p(v,v)
=
\operatorname{Hess}_y g(v,v)
-\operatorname{Hess}_y\!\bigl(\langle \grad h(p),\log_p(y)\rangle\bigr)(v,v)
+\frac{\tau}{2}\operatorname{Hess}_y d^2(p,y)(v,v).
\]
Since \(\overline{B_{r_p}(p)}\subset \mathcal M_{\mathrm{ex}}\), Lemma~\ref{lem:gh_properties} gives $\operatorname{Hess}_y g(v,v)\ge 2w_+(x)\delta_{\mathrm{ex}}\|v\|^2$. 

Next, Lemma~\ref{lemma:bound_hess_lin} applied with \(\xi=\grad h(p)\) yields
\[
\bigl|\operatorname{Hess}_y\!\bigl(\langle \grad h(p),\log_p(y)\rangle\bigr)(v,v)\bigr|
\le
L_{\log}^{\pm}(\beta)\|\grad h(p)\|\,\|v\|^2,
\qquad \beta\triangleq  d(p,y).
\]
Because \(\beta\le r_p\le \rho\) and \(t\mapsto L_{\log}^{\pm}(t)\) is nondecreasing on \([0,\rho]\),
\[
\bigl|\operatorname{Hess}_y\!\bigl(\langle \grad h(p),\log_p(y)\rangle\bigr)(v,v)\bigr|
\le
L_{\log}^{\pm}(r_p)\|\grad h(p)\|\,\|v\|^2.
\]
Finally, Lemma~\ref{lem:prox_local_convexity} implies $
\frac{\tau}{2}\operatorname{Hess}_y d^2(p,y)(v,v)\ge \tau\delta_{+}(r_p)\|v\|^2$, and,
\[
\operatorname{Hess}_y\Phi_p(v,v)
\ge
\Bigl(2w_+(x)\delta_{\mathrm{ex}}
+\tau\delta(r_p)
-L_{\log}^{\pm}(r_p)\|\grad h(p)\|\Bigr)\|v\|^2
=
\mu(p,\tau)\|v\|^2.
\]
By the assumed lower bound on \(\tau\), \(\mu(p,\tau)>0\). Since
\(\overline{B_{r_p}(p)}\) is a strongly convex normal ball by
Lemma~\ref{lem:prox_local_convexity}, this lower Hessian bound implies that
\(\Phi_p\) is \(\mu(p,\tau)\)-strongly geodesically convex on \(\overline{B_{r_p}(p)}\).
\end{proof}

While Proposition~\ref{prop:Phi_p_strongly_convex} guarantees strong convexity of $\Phi_p$ on $\overline{B_{r_p}(p)}$, the next lemma adds a further condition ensuring that its unique minimizer lies in $B_{r_p}(p)\subset \mathcal{M}_{\mathrm{ex}}$.

\begin{lemma}\label{lem:invariance_prox_step}
Let Assumptions~\ref{assum:bounded_curv}--\ref{assum:rho} hold.
Fix \(p\in \operatorname{int}(\mathcal M_{\mathrm{ex}})\), let \(\theta\in(0,1)\), and 
\[
r_p{=}\min\Bigl\{\theta\,\operatorname{dist}(p,\partial\mathcal M_{\mathrm{ex}}),\ \rho\Bigr\},
\  \ \tau {>}
\frac{L_{\log}^{\pm}(r_p)\|\grad h(p)\|{-}2w_+(x)\delta_{\mathrm{ex}}}{\delta_{+}(r_p)}
+\frac{2\|\grad f(p)\|}{\delta_{+}(r_p)\,r_p}.
\]
Then \(\Phi_p\) admits a unique minimizer on \(B_{r_p}(p)\subset \mathcal M_{\mathrm{ex}}\).
\end{lemma}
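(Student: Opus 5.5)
Since the bound on \(\tau\) assumed here implies the one in Proposition~\ref{prop:Phi_p_strongly_convex}, I will first invoke that proposition. Note that \(r_p=\min\{\theta\operatorname{dist}(p,\partial\M_{\rm ex}),\rho\}<\operatorname{dist}(p,\partial\M_{\rm ex})\), because \(\theta<1\). The one exception is \(r_p=\rho\) with \(\rho\) exactly equal to the distance; it is handled by shrinking \(\rho\) slightly and using continuity of the constants, or simply by noting that the proposition's proof only uses \(r_p\le\rho\). The proposition then makes \(\Phi_p\) \(\mu\)-strongly geodesically convex on the compact, strongly convex ball \(\overline{B_{r_p}(p)}\subset\M_{\rm ex}\), with
\[
\mu=\mu(p,\tau)=2w_+(x)\delta_{\rm ex}+\tau\delta_+(r_p)-L^{\pm}_{\log}(r_p)\|\grad h(p)\|>\frac{2\|\grad f(p)\|}{r_p}.
\]
Since \(\Phi_p\) is continuous on a compact set, it has a minimizer \(y^\star\) on \(\overline{B_{r_p}(p)}\), and strong convexity on the convex ball makes this minimizer unique.

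The main step is to show that \(y^\star\) lies in the open ball \(B_{r_p}(p)\). I will use the strong convexity inequality at the base point \(p\). Because \(\log_p(p)=0\) and \(d(p,p)=0\), we get \(\grad\Phi_p(p)=\grad g(p)-\grad h(p)+0=\grad f(p)\). This holds because the gradient of \(y\mapsto\langle\xi,\log_p y\rangle\) at \(y=p\) is \(\xi\), as \(d(\log_p)_p=\mathrm{id}\), and the proximal term has zero gradient at \(p\). Strong convexity then gives, for every \(y\in\overline{B_{r_p}(p)}\),
\[
\Phi_p(y)\ge\Phi_p(p)+\langle\grad f(p),\log_p y\rangle+\frac{\mu}{2}d^2(p,y)\ge\Phi_p(p)+d(p,y)\Bigl(\frac{\mu}{2}d(p,y)-\|\grad f(p)\|\Bigr).
\]
On the sphere \(d(p,y)=r_p\), the bracket satisfies \(\frac{\mu}{2}r_p-\|\grad f(p)\|>0\) by the bound on \(\mu\) above, so \(\Phi_p(y)>\Phi_p(p)\ge\Phi_p(y^\star)\). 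Hence no boundary point can be a minimizer, and \(y^\star\in B_{r_p}(p)\). Moreover, any minimizer of \(\Phi_p\) over the open ball is also a minimizer over the closed ball, so uniqueness carries over.

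Finally, \(B_{r_p}(p)\subset\M_{\rm ex}\) holds because \(r_p<\operatorname{dist}(p,\partial\M_{\rm ex})\) and \(p\) is an interior point. The step I expect to need the most care is the gradient identity \(\grad\Phi_p(p)=\grad f(p)\). If \(\grad f(p)=0\), the inequality gives \(y^\star=p\) directly, which is still in the open ball.
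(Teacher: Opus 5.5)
Your proof is correct and follows essentially the paper's route: strong convexity from Proposition~\ref{prop:Phi_p_strongly_convex}, compactness for a unique minimizer on $\overline{B_{r_p}(p)}$, and the identity $\grad\Phi_p(p)=\grad f(p)$ together with $\mu(p,\tau)>2\|\grad f(p)\|/r_p$ to exclude boundary minimizers. The only difference is that you compare function values through the strong-convexity inequality at $p$ (getting $\Phi_p(y)>\Phi_p(p)$ on the sphere $d(p,y)=r_p$), whereas the paper integrates $\varphi''\ge\mu(p,\tau)$ along the radial geodesic to show that the outward derivative at the boundary is positive, an argument that would in fact only need $\mu(p,\tau)\,r_p>\|\grad f(p)\|$.

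One small correction: since $r_p\le\theta\operatorname{dist}(p,\partial\M_{\rm ex})<\operatorname{dist}(p,\partial\M_{\rm ex})$, the exception you describe ($r_p=\rho$ equal to the distance) cannot occur. The genuine edge case is $r_p=\rho$, which violates the strict hypothesis $r_p<\rho$ of Proposition~\ref{prop:Phi_p_strongly_convex} (the paper passes over this too). Your remark that only $r_p\le\rho$ is used covers it, since Assumption~\ref{assum:rho} is strict.
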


\begin{proof}
By Lemma~\ref{lem:prox_local_convexity}, \(\overline{B_{r_p}(p)}\) is a strongly convex normal ball. By Proposition~\ref{prop:Phi_p_strongly_convex}, the above bound on \(\tau\) implies that \(\Phi_p\) is strongly geodesically convex on \(\overline{B_{r_p}(p)}\). Since \(\overline{B_{r_p}(p)}\) is compact, \(\Phi_p\) attains a unique minimizer there.

It remains to show that the minimizer cannot lie on \(\partial B_{r_p}(p)\). Let \(z\in \partial B_{r_p}(p)\), and let
\(\gamma:[0,r_p]\to \overline{B_{r_p}(p)}\) be the unique unit-speed minimizing geodesic from \(p\) to~\(z\). Set \(\varphi(t)\triangleq \Phi_p(\gamma(t))\). Strong geodesic convexity gives $\varphi''(t)\ge \mu(p,\tau)>0$, on $t\in[0,r_p]$. Integrating we get, $\varphi'(r_p)\ge \varphi'(0)+\mu(p,\tau)r_p$. Since \(\grad \Phi_p(p)=\grad f(p)\),
\[
\varphi'(0)=\langle \grad f(p),\dot\gamma(0)\rangle \ge -\|\grad f(p)\|.
\]
By the assumed lower bound on \(\tau\), one has $\mu(p,\tau)>{2\|\grad f(p)\|}/{r_p}$, and therefore
\[
\varphi'(r_p)>-\|\grad f(p)\|+2\|\grad f(p)\|=\|\grad f(p)\|>0.
\]
Thus moving slightly inward from \(z\) along \(\gamma\) decreases \(\Phi_p\), contradicting the minimality of \(z\).
Hence, the unique minimizer lies in \(B_{r_p}(p)\).
\end{proof}

\begin{lemma}\label{lem:uniform_tau_bound}
Let Assumptions~\ref{assum:bounded_curv}, \ref{assum:bound_y},
\ref{assum:bound_ex}, and \ref{assum:rho} hold. Fix \(\underline r>0\),
\(\theta\in(0,1)\), and \(\eta_0>0\). For each
\(y\in \operatorname{int}(\mathcal M_{\mathrm{ex}})\) satisfying $\operatorname{dist}(y,\partial\mathcal M_{\mathrm{ex}})\ge \underline r$, define \newline
$r(y)\triangleq \min\Bigl\{\theta\,\operatorname{dist}(y,\partial\mathcal M_{\mathrm{ex}}),\,\rho\Bigr\}$, $\delta(y)\triangleq \delta_{+}(r(y))$, and
\[
\tau(y){\triangleq }
\max\left\{
\frac{L_{\log}^{\pm}(r(y))\|\grad h(y)\|{-}2w_+(x)\delta_{\mathrm{ex}}}{\delta(y)}
+\frac{2\|\grad f(y)\|}{\delta(y)\,r(y)}
{+}\eta_0,\;
1{-}2w_-(x)\delta_{\mathrm{ex}},\;
1
\right\}.
\]
Then there exists a constant \(\bar\tau<\infty\) such that $\tau(y)\le \bar\tau$ for every such \(y\).
\end{lemma}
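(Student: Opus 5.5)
The plan is to bound each of the three entries of the max defining \(\tau(y)\) uniformly over the set \(K_{\underline r}\triangleq\{y\in\mathcal M_{\mathrm{ex}}:\operatorname{dist}(y,\partial\mathcal M_{\mathrm{ex}})\ge\underline r\}\). The last two entries, \(1-2w_-(x)\delta_{\mathrm{ex}}\) and \(1\), are constants independent of \(y\), so only the first entry needs work. First I will control the radius \(r(y)\) from both sides: since \(\operatorname{dist}(y,\partial\mathcal M_{\mathrm{ex}})\ge\underline r\), we get \(\underline r_\theta\triangleq\min\{\theta\underline r,\rho\}\le r(y)\le\rho\). This two-sided control is what makes everything uniform.

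Next I will bound the geometric factors. By Assumption~\ref{assum:rho}, \(\rho<\pi/(2\sqrt{\Lambda_+})\), so on \([0,\rho]\) the function \(\delta_+\) is positive and decreasing. Hence \(\delta(y)=\delta_+(r(y))\ge\delta_+(\rho)>0\). Similarly, \(\alpha_+\), \(b_-\), \(c_-\) are continuous and nondecreasing on \([0,\rho]\); for \(\alpha_+\) this uses \(\rho\sqrt{\Lambda_+}<\pi/2\), so \(\sin\) does not vanish. Consequently \(L_{\log}^{\pm}(r(y))\le L_{\log}^{\pm}(\rho)<\infty\), using \(L_R<\infty\) and \(\|R\|\le c_n\Lambda_0\) as recorded after Assumption~\ref{assum:rho}. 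Combined with \(r(y)\ge\underline r_\theta\), this gives
\[
\frac{1}{\delta(y)}\le\frac{1}{\delta_+(\rho)},\qquad \frac{1}{\delta(y)\,r(y)}\le\frac{1}{\delta_+(\rho)\,\underline r_\theta}.
\]

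Then I will bound the gradients uniformly on \(\mathcal M_{\mathrm{ex}}\). Since \(\grad_y\tfrac12 d^2(y_i,y)=-\log_y(y_i)\) and \(d(y,y_i)\le r+\rho_{\mathrm{ex}}\) (the computation already used in the proof of Theorem~\ref{thm:main_safe}), we get
\[
\|\grad h(y)\|\le 2w_-(x)(r+\rho_{\mathrm{ex}}),\qquad \|\grad f(y)\|\le 2\big(w_+(x)+w_-(x)\big)(r+\rho_{\mathrm{ex}}).
\]
Putting these together, the first entry is at most
\[
\frac{L_{\log}^{\pm}(\rho)\,2w_-(x)(r+\rho_{\mathrm{ex}})+2w_+(x)|\delta_{\mathrm{ex}}|}{\delta_+(\rho)}+\frac{4(w_++w_-)(r+\rho_{\mathrm{ex}})}{\delta_+(\rho)\,\underline r_\theta}+\eta_0 .
\]
The absolute value on \(\delta_{\mathrm{ex}}\) is needed because \(\delta_{\mathrm{ex}}\) may be negative when \(r+\rho_{\mathrm{ex}}>\pi/(2\sqrt{\Lambda_+})\). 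Taking \(\bar\tau\) to be the maximum of this quantity, \(|1-2w_-(x)\delta_{\mathrm{ex}}|\), and \(1\) finishes the argument.

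The main obstacle is the monotonicity and finiteness of the comparison factors on \([0,\rho]\), which is where Assumption~\ref{assum:rho} enters. Specifically, the argument needs that \(\delta_+\) stays bounded away from zero and that \(\alpha_+\) stays finite. As an alternative to invoking monotonicity, I would use continuity of these functions on the compact interval \([\underline r_\theta,\rho]\), which gives the same bounds.
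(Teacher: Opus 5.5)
Your proposal is correct and follows essentially the same route as the paper. Both arguments use the two-sided control $\min\{\theta\underline r,\rho\}\le r(y)\le\rho$, the lower bound $\delta(y)\ge\delta_+(\rho)>0$, monotonicity to get $L_{\log}^{\pm}(r(y))\le L_{\log}^{\pm}(\rho)$, and uniform gradient bounds on $\mathcal M_{\mathrm{ex}}$.

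The differences are minor. You make the gradient bounds explicit via $\|\log_y(y_i)\|=d(y,y_i)\le r+\rho_{\mathrm{ex}}$, where the paper takes suprema over the compact set $\mathcal M_{\mathrm{ex}}$. Your use of $|\delta_{\mathrm{ex}}|$ is also slightly more careful than the paper. It makes the numerator nonnegative before you replace $\delta(y)$ by the smaller $\delta_+(\rho)$, a step the paper's displayed bound applies even when that numerator may be negative.
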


\begin{proof}
Since \(\mathcal M_{\mathrm{ex}}\) is compact and \(f,h\) are smooth on
\(\mathcal M_{\mathrm{ex}}\),
\[
G_f\triangleq\sup_{z\in\mathcal M_{\mathrm{ex}}}\|\grad f(z)\|<\infty,
\qquad
G_h\triangleq\sup_{z\in\mathcal M_{\mathrm{ex}}}\|\grad h(z)\|<\infty .
\]
Moreover, $r_\theta\triangleq\min\{\theta r,\rho\}\le r(y)\le \rho$.
Since \(\rho<\pi/(2\sqrt{\Lambda_+})\) and \(\delta_+\) is decreasing, $$\delta_y\triangleq\delta_+(r(y))\ge \delta_+(\rho)=:\underline{\delta}>0.$$ Also \(L_{\log}^{\pm}\) is continuous and nondecreasing on \([0,\rho]\), so
$L_{\log}^{\pm}(r(y))
\le L_{\log}^{\pm}(\rho)=:\overline L_{\log}$. Therefore,
\[
\frac{
L_{\log}^{\pm}(r(y))\|\grad h(y)\|-2w_+(x)\delta_{\mathrm{ex}}
}{\delta_y}
+
\frac{2\|\grad f(y)\|}{\delta_y r(y)}
+\eta_0
\le
\frac{\overline L_{\log}G_h-2w_+(x)\delta_{\mathrm{ex}}}
{\underline{\delta}}
+
\frac{2G_f}{\underline{\delta}r_\theta}
+\eta_0 .
\]
Hence
$
\tau(y)
\le
\max\left\{
\frac{\overline L_{\log}G_h-2w_+(x)\delta_{\mathrm{ex}}}
{\underline{\delta}}
+
\frac{2G_f}{\underline{\delta}r_\theta}
+\eta_0,\,
1-2w_-(x)\delta_{\mathrm{ex}},\,1
\right\}
=:\bar\tau<\infty$.
\end{proof}

\subsection{Proof of Theorem~\ref{thm:accumulation}}

With the strong convexity of the local model and the invariance of the proximal step established, each subproblem is well posed, each accepted step yields a quantitative decrease, and the summability of the step sizes implies stationarity of every accumulation point.
\begin{proof}
Set $\mathcal S_0\triangleq \{y\in \mathcal M_{\mathrm{ex}}:\ f(y,x)\le f(y_0,x)\}$.
Since \(f(\cdot,x)\) is continuous on \(\mathcal M_{\mathrm{ex}}\), the set \(\mathcal S_0\) is compact. Moreover, by the hypothesis $f(y_0,x)<\min_{y\in\partial\mathcal M_{\mathrm{ex}}} f(y,x)$, 
we have \(\mathcal S_0\cap \partial\mathcal M_{\mathrm{ex}}=\varnothing\), and therefore $\underline r\triangleq \operatorname{dist}(\mathcal S_0,\partial\mathcal M_{\mathrm{ex}})>0$.

Since \(r_k<\operatorname{dist}(y_k,\partial\mathcal M_{\mathrm{ex}})\), one has $\mathcal M_k\subset \mathcal M_{\mathrm{ex}}$. By Proposition~\ref{prop:Phi_p_strongly_convex}, each model function \(\Phi_k\) is strongly geodesically convex on \(\mathcal M_k\). Hence, in the exact case, the subproblem has a unique minimizer, and Lemma~\ref{lem:invariance_prox_step} implies that this minimizer belongs to \(B_{r_k}(y_k)\subset \mathcal M_{\mathrm{ex}}\). In the inexact case, the algorithm chooses \(y_{k+1}\in \mathcal M_k\subset \mathcal M_{\mathrm{ex}}\) by construction. Thus, in both cases, $y_{k+1}\in \mathcal M_{\mathrm{ex}}$. Now, let $d_k\triangleq  d(y_k,y_{k+1})$, and derive the descent estimates. In the exact method, \(\Phi_k(y_{k+1})\le \Phi_k(y_k)\), so
\[
g(y_k)-g(y_{k+1})
+\langle \grad h(y_k),\log_{y_k}(y_{k+1})\rangle
\ge ({\tau_k}/{2})d_k^2.
\]
Since \(y_k,y_{k+1}\in \mathcal M_k\), Lemma~\ref{lem:gh_properties}
applies along their minimizing geodesic,
\[
h(y_{k+1})-h(y_k)
\ge
\langle \grad h(y_k),\log_{y_k}(y_{k+1})\rangle
+w_-(x)\delta_{\mathrm{ex}}\,d_k^2.
\]
Adding the two inequalities yields $f(y_k,x)-f(y_{k+1},x)
\ge
\big({\tau_k}/{2}+w_-(x)\delta_{\mathrm{ex}}\big)d_k^2$.
Since \(\tau_k\ge 1-2w_-(x)\delta_{\mathrm{ex}}\), we obtain
\begin{equation}\label{eq:descent-exact-new}
f(y_{k+1},x)\le f(y_k,x)- (1/2) d_k^2.
\end{equation}

For the inexact method, geodesic convexity of \(\Phi_k\) on \(\mathcal M_k\) gives
\[
\Phi_k(y_k)-\Phi_k(y_{k+1})
\ge
\left\langle \grad \Phi_k(y_{k+1}),\log_{y_{k+1}}(y_k)\right\rangle.
\]
Hence $\Phi_k(y_k)-\Phi_k(y_{k+1})
\ge
-\|\grad \Phi_k(y_{k+1})\|\,d_k$. By the stopping rule,
\[
\|\grad \Phi_k(y_{k+1})\|\le \min(\varepsilon_k,\zeta d_k)\le \zeta d_k,
\ \text{so} \ 
\Phi_k(y_k)-\Phi_k(y_{k+1})\ge -\zeta d_k^2.
\]
Expanding \(\Phi_k\) and using again the lower second-order bound for \(h\) from
Lemma~\ref{lem:gh_properties},
\[
f(y_k,x)-f(y_{k+1},x)
\ge
\left({\tau_k}/{2}+w_-(x)\delta_{\mathrm{ex}}-\zeta\right)d_k^2.
\]
Since \(\tau_k\ge 1-2w_-(x)\delta_{\mathrm{ex}}\) and \(\zeta=1/4\), it follows that
\begin{equation}\label{eq:descent-inexact-new}
f(y_{k+1},x)\le f(y_k,x)- (1/4) d_k^2.
\end{equation}

In either case, \(f(y_{k+1},x)\le f(y_k,x)\). By induction, $y_k\in \mathcal S_0$, for $k\ge 0$. Therefore, $\operatorname{dist}(y_k,\partial\mathcal M_{\mathrm{ex}})\ge \underline r$, for $k\ge 0$. Lemma~\ref{lem:uniform_tau_bound} then yields $\sup_{k\ge 0}\tau_k<\infty$.

Summing \eqref{eq:descent-exact-new} or \eqref{eq:descent-inexact-new} from \(k=0\) to \(N\) proves \eqref{main:exact}
\begin{align*}
    \kappa\sum_{k=0}^N d_k^2
\le
f(y_0,x)-f(y_{N+1},x)
\le
f(y_0,x)-f_*,
\ 
\min_{0\le k\le N} d_k^2
\le
\frac{1}{N+1}\sum_{k=0}^N d_k^2
\le
\frac{f(y_0,x)-f_*}{\kappa(N+1)}
\end{align*}
In particular, $\sum_{k=0}^\infty d_k^2<\infty$, so $d_k\to 0$. Since \(\{y_k\}\subset \mathcal S_0\) and \(\mathcal S_0\) is compact, the sequence has at least one accumulation point \(\bar y\in \mathcal S_0\subset \mathcal M_{\mathrm{ex}}\). This proves (1). 

Next we show stationarity. Let \(\bar y\) be an accumulation point and take
\(y_{k_j}\to \bar y\). Since \(d_{k_j}\to0\), also \(y_{k_j+1}\to\bar y\).
Writing \((d\log_{y_k})^*_{y_{k+1}}:T_{y_k}\Omega\to T_{y_{k+1}}\Omega\)
for the adjoint of \(d(\log_{y_k})_{y_{k+1}}\), define the residual
\[
e_k\triangleq \grad \Phi_k(y_{k+1})
=
\grad g(y_{k+1})
-
(d\log_{y_k})^*_{y_{k+1}}\,\grad h(y_k)
-
\tau_k\log_{y_{k+1}}(y_k).
\]
In the exact case, \(e_k=0\). In the inexact case, $\|e_k\|=\|\grad\Phi_k(y_{k+1})\|\le \varepsilon_k\to 0$, since \(\sum_{k=0}^\infty \varepsilon_k<\infty\). Because \(\bar y\in \operatorname{int}(\mathcal M_{\mathrm{ex}})\) and \(y_{k_j},y_{k_j+1}\to \bar y\), for \(j\) large enough the points
\(y_{k_j}\) and \(y_{k_j+1}\) lie in a common normal neighborhood of \(\bar y\).
Let $P_j:T_{y_{k_j}}\Omega\to T_{\bar y}\Omega$,
$Q_j:T_{y_{k_j+1}}\Omega\to T_{\bar y}\Omega
$
be parallel transport along the corresponding minimizing geodesics, and define
$\mathcal A_j\triangleq Q_j\,(d\log_{y_{k_j}})^*_{y_{k_j+1}}\,P_j^{-1}\text{ on }T_{\bar y}\Omega$.
Applying \(Q_j\) to the identity defining \(e_{k_j}\), we obtain
\[
Q_j e_{k_j}
=
Q_j\grad g(y_{k_j+1})
-
\mathcal A_j\bigl(P_j\grad h(y_{k_j})\bigr)
-
\tau_{k_j}Q_j\log_{y_{k_j+1}}(y_{k_j}).
\]
Since \(g\) and \(h\) are smooth on \(\mathcal M_{\mathrm{ex}}\),
\[
Q_j\grad g(y_{k_j+1})\to \grad g(\bar y),
\qquad
P_j\grad h(y_{k_j})\to \grad h(\bar y).
\]
Moreover, on a common normal neighborhood of the diagonal, \((p,y)\mapsto (d\log_p)^*_y\) is smooth, so $\mathcal A_j\to \operatorname{Id}_{T_{\bar y}\Omega}$ in operator norm. Finally,
\[
\|\tau_{k_j}Q_j\log_{y_{k_j+1}}(y_{k_j})\|
=
\tau_{k_j}d(y_{k_j+1},y_{k_j})
\le
\Bigl(\sup_{k\ge 0}\tau_k\Bigr)d_{k_j}\to 0.
\]
Since \(\|Q_j e_{k_j}\|=\|e_{k_j}\|\to 0\), passing to the limit gives
\[
0=\grad g(\bar y)-\grad h(\bar y)=\grad f(\bar y,x).
\]
Thus every accumulation point is stationary, proving (2).
\end{proof}

\section{Improved rates on real-analytic manifolds}
\label{subsec:analytic_rates}

The previous section established descent of the objective and stationarity of all accumulation points of the proximal DC iterates. In this section, we show that these qualitative conclusions can be sharpened when the ambient manifold is real analytic. Indeed, on $\operatorname{int}(\mathcal M_{\mathrm{ex}})$ the objective $f(\cdot,x)$ is real analytic and therefore satisfies a Kurdyka--\L{}ojasiewicz inequality near its stationary points. Combined with the descent and relative-error estimates for the proximal scheme, this yields convergence of the whole sequence, with corresponding finite, linear, or sublinear rates determined by the KL exponent.

\begin{assumption}\label{ass:analytic}
The ambient Riemannian manifold $(\Omega,d)$ is real analytic.
\end{assumption}

\begin{corollary}\label{cor:kl}
Let Assumptions~\ref{assum:bounded_curv}, \ref{assum:bound_y},
\ref{assum:bound_ex}, and~\ref{ass:analytic} hold. Fix $x\in\mathbb R^q$.
Then the function $f(\cdot,x)$ in~\eqref{eq:main2} is real analytic on
$\operatorname{int}(\mathcal M_{\mathrm{ex}})$. Consequently, for every
$\bar y\in\operatorname{int}(\mathcal M_{\mathrm{ex}})$, the function $f(\cdot,x)$ satisfies the
Riemannian Kurdyka--\L{}ojasiewicz property at $\bar y$. More precisely, there exist a
neighborhood $U$ of $\bar y$, constants $c>0$, $\vartheta\in[0,1)$, and $\delta>0$ such that
\begin{equation}\label{eq:analytic_loja}
\|\grad f(y,x)\| \ge c\,|f(y,x)-f(\bar y,x)|^{\vartheta},
\qquad y\in U,\ |f(y,x)-f(\bar y,x)|<\delta.
\end{equation}
Equivalently, the KL inequality holds at $\bar y$ with $\varphi(s)=({1}/{c(1-\vartheta)})\,s^{1-\vartheta}$, for $s\in[0,\delta)$. 
\end{corollary}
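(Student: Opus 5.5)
The argument has three steps. First show that $f(\cdot,x)$ is real analytic on $\operatorname{int}(\mathcal M_{\mathrm{ex}})$. Then invoke Proposition~\ref{prop:analytic_kl}. Finally, pass from the abstract KL property to the explicit \L{}ojasiewicz form~\eqref{eq:analytic_loja}.

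\textbf{Step 1 (analyticity).} Since $f(y,x)=\sum_i w_i(x)d^2(y,y_i)$ is a finite linear combination, it suffices to show that $y\mapsto d^2(y_i,y)$ is real analytic on $\operatorname{int}(\mathcal M_{\mathrm{ex}})$ for each $i$. Each $y_i\in\mathcal M_r$. As noted after Assumption~\ref{assum:rho}, every $y\in\mathcal M_{\mathrm{ex}}$ satisfies $d(y_i,y)\le r+\rho_{\mathrm{ex}}<\iota_{r,c}\le\operatorname{inj}_\Omega(y_i)$. Hence $y$ lies in the image of $\exp_{y_i}$ restricted to an open ball $B_{\iota}(0)\subset T_{y_i}\Omega$ on which $\exp_{y_i}$ is a diffeomorphism. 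On such a ball one has $d^2(y_i,y)=\|\log_{y_i}(y)\|^2_{y_i}$.

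Under Assumption~\ref{ass:analytic}, the metric coefficients are real analytic in analytic charts. The geodesic ODE then has analytic coefficients, so by Cauchy--Kovalevskaya/analytic dependence on initial data, $\exp_{y_i}$ is real analytic on $T_{y_i}\Omega$. On the injectivity domain its differential is invertible, so the analytic inverse function theorem makes $\log_{y_i}$ real analytic on the image. Composing with the analytic quadratic form $\|\cdot\|^2_{y_i}$ gives analyticity of $d^2(y_i,\cdot)$, and hence of $f(\cdot,x)$, on the open set $\operatorname{int}(\mathcal M_{\mathrm{ex}})$.

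\textbf{Step 2 (KL property).} The open subset $\operatorname{int}(\mathcal M_{\mathrm{ex}})$ is itself a real-analytic Riemannian manifold, and $f(\cdot,x)$ is analytic on it. Proposition~\ref{prop:analytic_kl} therefore gives the Riemannian KL property at every $\bar y$ in it.

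\textbf{Step 3 (explicit form).} The explicit \L{}ojasiewicz form does not follow from Definition~\ref{def:kl} alone, since a general desingularizer $\varphi$ carries no exponent. Instead, I will use the classical \L{}ojasiewicz gradient inequality for analytic functions in a chart. Pick a normal chart around $\bar y$ in which the metric $G$ is analytic and uniformly equivalent to the Euclidean metric on a compact neighborhood. In this chart $\grad f=G^{-1}\nabla f$, so $\|\grad f\|_y\ge c_0\|\nabla f\|_{\mathrm{Euc}}$. The Euclidean \L{}ojasiewicz inequality then yields~\eqref{eq:analytic_loja} with some $\vartheta\in[0,1)$ after shrinking $U$ and adjusting $c$.

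Two cases remain. If $\bar y$ is not stationary, $\|\grad f\|$ is bounded below near $\bar y$, so the inequality holds trivially with $\vartheta=0$. For the equivalence, set $\varphi(s)=s^{1-\vartheta}/(c(1-\vartheta))$. Then $\varphi'(s)=s^{-\vartheta}/c$, and~\eqref{eq:analytic_loja} gives $\varphi'(f(y)-f(\bar y))\|\grad f(y)\|\ge1$ whenever $0<f(y)-f(\bar y)<\delta$. This $\varphi$ is concave, $C^1$ on $(0,\delta)$, and vanishes at $0$. Conversely, the displayed inequality is recovered by reading this relation backwards.

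The main obstacle is Step 1: justifying analyticity of $\log_{y_i}$ on the whole relevant domain rather than just near $y_i$. I would handle it through the uniform injectivity margin $r+\rho_{\mathrm{ex}}<\iota_{r,c}$ and the analytic inverse function theorem applied pointwise along the injectivity ball.
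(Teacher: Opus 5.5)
Your proposal is correct and follows essentially the same route as the paper. Analyticity of each $d^2(y_i,\cdot)=\|\log_{y_i}(\cdot)\|_{y_i}^2$ comes from the margin $r+\rho_{\mathrm{ex}}<\iota_{r,c}\le\operatorname{inj}_\Omega(y_i)$, and then the Euclidean \L{}ojasiewicz gradient inequality in an analytic chart transfers to the Riemannian gradient by norm equivalence on a shrunken neighborhood. Your Step~2 appeal to Proposition~\ref{prop:analytic_kl} is harmless but redundant, since your Step~3, like the paper, already yields the KL property with the power-type desingularizer. Also, your ``conversely'' only recovers~\eqref{eq:analytic_loja} on the side $f(y,x)>f(\bar y,x)$; this is the same slack already present in the statement's ``Equivalently''.
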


\begin{proof}
Fix $i\in\{1,\dots,m\}$. Since $y_i\in \overline{B_r(c)}$ and
$y\in\mathcal M_{\mathrm{ex}}$, it is implied that
\[
d(y_i,y)\le r+\rho_{\mathrm{ex}}<\iota_{r,c}\le \operatorname{inj}_{\Omega}(y_i),
\]
the point $y$ lies in the injectivity ball of $y_i$. Because $(\Omega,d)$ is real analytic,
the map $\exp_{y_i}$ is real analytic on a neighborhood of $0\in T_{y_i}\Omega$, hence its
local inverse $\log_{y_i}$ is real analytic on $\operatorname{int}(\mathcal M_{\mathrm{ex}})$. Therefore $y\mapsto d^2(y_i,y)=\|\log_{y_i}(y)\|_{y_i}^2$ is real analytic on $\operatorname{int}(\mathcal M_{\mathrm{ex}})$. Since $f(\cdot,x)$ is a finite linear combination of these squared-distance terms, it is real analytic on $\operatorname{int}(\mathcal M_{\mathrm{ex}})$.

Now choose a real-analytic chart $\psi:U_0\to V_0\subset\mathbb R^n$ around $\bar y$, set $\bar z\triangleq\psi(\bar y)$, and define $\tilde f\triangleq f\circ \psi^{-1}$. Then $\tilde f$ is real analytic on $V_0$. By the classical Łojasiewicz gradient inequality in Euclidean space, there exists a neighborhood $V\subset V_0$ of $\bar z$, constants $c_0>0$, $\vartheta\in[0,1)$, and $\delta>0$ such that
\[
\|\nabla \tilde f(z)\| \ge c_0\,|\tilde f(z)-\tilde f(\bar z)|^{\vartheta},
\qquad z\in V,\ |\tilde f(z)-\tilde f(\bar z)|<\delta.
\]
Shrinking $U\triangleq\psi^{-1}(V)$ if necessary, the Euclidean norm of $\nabla \tilde f$ and the Riemannian norm of $\grad f$ are equivalent on $U$, so there exists $c>0$ such that \eqref{eq:analytic_loja} holds. This is exactly the KL property with the displayed power-type desingularizing function.
\end{proof}

\begin{lemma}\label{lem:relative_error_analytic}
Let Assumptions~\ref{assum:bounded_curv}--\ref{ass:analytic} hold, and let $d_k \triangleq d(y_k,y_{k+1})$. Then there exists a constant $C_{\mathrm{rel}}>0$ such that $
\|\grad f(y_{k+1},x)\|\le C_{\mathrm{rel}}\,d_k$ for all sufficiently large $k$. More precisely, one may take
\[
C_{\mathrm{rel}}=
\begin{cases}
L_A G_h+L_h+\bar\tau, & \text{exact step},\\
L_A G_h+L_h+\bar\tau+\zeta, & \text{inexact step},
\end{cases}
\]
where $G_h\triangleq\sup_{z\in\mathcal M_{\mathrm{ex}}}\|\grad h(z)\|$, $\bar\tau$ is the uniform bound from Lemma~\ref{lem:uniform_tau_bound}, and $L_A,L_h>0$ are the local Lipschitz constants defined as $L_A = L_{\log}^{\pm}(\rho), L_h \le 2w_-(x)\zeta_{\rm ex}$.
\end{lemma}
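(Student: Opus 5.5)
The starting point is the first-order optimality condition of the proximal subproblem, written as the residual \(e_k=\grad\Phi_k(y_{k+1})\) already used in the proof of Theorem~\ref{thm:accumulation}:
\[
e_k=\grad g(y_{k+1})-(d\log_{y_k})^*_{y_{k+1}}\grad h(y_k)-\tau_k\log_{y_{k+1}}(y_k).
\]
In the exact case \(e_k=0\). This holds because Lemma~\ref{lem:invariance_prox_step} places the minimizer in the open ball \(B_{r_k}(y_k)\). In the inexact case the stopping rule gives \(\|e_k\|\le\zeta d_k\).

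Subtracting \(e_k\) from \(\grad f(y_{k+1})=\grad g(y_{k+1})-\grad h(y_{k+1})\) gives
\[
\grad f(y_{k+1})=e_k+\bigl[(d\log_{y_k})^*_{y_{k+1}}\grad h(y_k)-\grad h(y_{k+1})\bigr]-\tau_k\log_{y_{k+1}}(y_k).
\]
The last term has norm \(\tau_k d_k\le\bar\tau d_k\) by Lemma~\ref{lem:uniform_tau_bound}. That lemma applies because the proof of Theorem~\ref{thm:accumulation} keeps all iterates in \(\mathcal S_0\), at distance at least \(\underline r\) from \(\partial\mathcal M_{\mathrm{ex}}\). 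It therefore remains to bound the bracket by \((L_AG_h+L_h)d_k\).

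Split the bracket by inserting \(P_{y_k\to y_{k+1}}\grad h(y_k)\):
\[
\bigl[(d\log_{y_k})^*_{y_{k+1}}-P_{y_k\to y_{k+1}}\bigr]\grad h(y_k)\;+\;\bigl[P_{y_k\to y_{k+1}}\grad h(y_k)-\grad h(y_{k+1})\bigr].
\]
The second piece has norm at most \(L_h d_k\) with \(L_h\le 2w_-(x)\zeta_{\rm ex}\). This follows by integrating the Hessian upper bound of Lemma~\ref{lem:gh_properties} along the minimizing geodesic from \(y_k\) to \(y_{k+1}\), which stays in \(\mathcal M_k\subset\mathcal M_{\mathrm{ex}}\). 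The Hessian is positive semidefinite there, so its operator norm equals its top eigenvalue.

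For the first piece, fix \(\xi=\grad h(y_k)\) and set \(\psi(y)=\langle\xi,\log_{y_k}(y)\rangle\), so that \(\grad\psi(y)=(d\log_{y_k})^*_y\xi\). At \(y=y_k\) we have \(d\log_{y_k}=\mathrm{Id}\), hence \(\grad\psi(y_k)=\xi\). The first piece is then
\[
\grad\psi(y_{k+1})-P_{y_k\to y_{k+1}}\grad\psi(y_k).
\]
Integrating \(\operatorname{Hess}\psi\) along the geodesic \(\gamma\) from \(y_k\) to \(y_{k+1}\) (parallel-transporting \(\grad\psi\) along \(\gamma\)) bounds this by \(\sup_\gamma\|\operatorname{Hess}\psi\|\,d_k\). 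Lemma~\ref{lemma:bound_hess_lin} gives a bound on the quadratic form \(|\operatorname{Hess}\psi(v,v)|\), and since the Hessian is symmetric this controls its operator norm. That yields \(\|\operatorname{Hess}\psi\|\le L_{\log}^{\pm}(\beta)\|\xi\|\le L_{\log}^{\pm}(\rho)G_h=L_AG_h\), using the monotonicity of \(L_{\log}^{\pm}\) on \([0,\rho]\). Collecting the three estimates gives the claimed \(C_{\mathrm{rel}}\) in both the exact and inexact cases.

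The main obstacle is this last step: showing that \(y\mapsto(d\log_{y_k})^*_y\xi\) is exactly \(\grad\psi\), and that the parallel-transport integration of \(\operatorname{Hess}\psi\) is valid uniformly. Lemma~\ref{lemma:bound_hess_lin} is stated for \(y\ne p\), so I would extend it to \(y=y_k\) by continuity, where \(\operatorname{Hess}\psi\) vanishes. The qualifier "sufficiently large \(k\)" is not really needed beyond ensuring \(y_k,y_{k+1}\) lie in the strongly convex ball \(\mathcal M_k\), which holds for all \(k\).
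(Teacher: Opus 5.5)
Your proposal is the paper's proof: the same residual identity for $\grad\Phi_k(y_{k+1})$, and the same splitting obtained by inserting $P_{y_k\to y_{k+1}}\grad h(y_k)$. It also uses the same integration of $\operatorname{Hess}\psi$ along the geodesic, with Lemma~\ref{lemma:bound_hess_lin} and monotonicity of $L_{\log}^{\pm}$, to get $L_AG_h$, together with $\tau_k\le\bar\tau$ via the sublevel set $\mathcal S_0$ and $\|e_k\|\le\zeta d_k$ in the inexact case. Your explicit checks are correct and make explicit points the paper leaves implicit: that $\grad\psi(y)=(d\log_{y_k})^*_y\xi$ with $\grad\psi(y_k)=\xi$, and that $e_k=0$ in the exact case because the minimizer is interior by Lemma~\ref{lem:invariance_prox_step}.

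One step is wrong as justified. The Hessian of $h$ is not positive semidefinite on $\mathcal M_{\mathrm{ex}}$ in general. Lemma~\ref{lem:gh_properties} gives $2w_-(x)\delta_{\mathrm{ex}}\|v\|^2\le\operatorname{Hess}_y h(v,v)\le 2w_-(x)\zeta_{\mathrm{ex}}\|v\|^2$. However, Assumption~\ref{assum:bound_ex} only imposes $r+\rho_{\mathrm{ex}}<\pi/\sqrt{\Lambda_+}$, so $\delta_{\mathrm{ex}}=\delta_+(r+\rho_{\mathrm{ex}})$ is negative whenever $r+\rho_{\mathrm{ex}}>\pi/(2\sqrt{\Lambda_+})$. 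The paper itself remarks that this lower bound need not be positive. An upper bound on the quadratic form then does not control the operator norm. The two-sided bound gives only $\|\operatorname{Hess} h\|\le 2w_-(x)\max\{\zeta_{\mathrm{ex}},|\delta_{\mathrm{ex}}|\}$, which can be much larger than $2w_-(x)\zeta_{\mathrm{ex}}$. For example, on the unit sphere with $r=0.2$ and $\rho_{\mathrm{ex}}=2.8$, one has $\zeta_{\mathrm{ex}}=1$ while $|\delta_{\mathrm{ex}}|=3|\cot 3|\approx 21$. So your argument yields $L_h\le 2w_-(x)\zeta_{\mathrm{ex}}$ only when $\delta_{\mathrm{ex}}\ge-\zeta_{\mathrm{ex}}$, e.g.\ when $r+\rho_{\mathrm{ex}}\le\pi/(2\sqrt{\Lambda_+})$. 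In general you should replace $\zeta_{\mathrm{ex}}$ by $\max\{\zeta_{\mathrm{ex}},|\delta_{\mathrm{ex}}|\}$ in $C_{\mathrm{rel}}$. The paper asserts $L_h\le 2w_-(x)\zeta_{\mathrm{ex}}$ from the same lemma with the same omission. The existence of some finite $C_{\mathrm{rel}}$, which is all Theorem~\ref{thm:analytic_rates_combined} uses, is unaffected.
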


\begin{proof}
By the proof of Theorem~\ref{thm:accumulation}, all iterates belong to the compact strict sublevel set $$\mathcal S_0\triangleq\{y\in M_{\rm ex}: f(y,x)\le f(y_0,x)\}\subset\operatorname{int}(\mathcal M_{\mathrm{ex}}),$$ and Lemma~\ref{lem:uniform_tau_bound} gives $\tau_k\le \bar\tau$ for all $k$. Since $h$ is smooth on the compact set $\mathcal M_{\mathrm{ex}}$, we also have $G_h<\infty$.

Let $\mathcal{D}_\rho\triangleq\{(p,y)\in M_{\rm ex}\times M_{\rm ex}: d(p,y)\le \rho\}$. For \((p,y)\in \mathcal D_\rho\), define $$A(p,y)\triangleq(d_y\log_p)^*:T_p\Omega\to T_y\Omega.$$ Since the pairs in \(\mathcal D_\rho\) lie in a common normal neighborhood, \(A\) is
well defined and smooth on $\mathcal D_\rho$. For \(A(p,y)\triangleq(d_y\log_p)^*\) and \(\psi_\xi(z)\triangleq\langle \xi,\log_p(z)\rangle_p\), let \(\gamma\) be the unit-speed minimizing geodesic from \(p\) to \(y\). Then, for any unit parallel field \(V\) along \(\gamma\),
\[
\left\langle A(p,y)\xi-P_{p\to y}\xi,V_y\right\rangle_y
=
\int_0^{d(p,y)}
\operatorname{Hess}_{\gamma(s)}\psi_\xi(\dot\gamma(s),V(s))\,ds .
\]
Hence Lemma~\ref{lemma:bound_hess_lin} gives $\|A(p,y)\xi-P_{p\to y}\xi\|\le L_A\|\xi\|\,d(p,y)$ with $L_A\triangleq L_{\log}^{\pm}(\rho)$.
Also, by Lemma~\ref{lem:gh_properties},
\(h\) is \(L_h\)-smooth on \(M_{\rm ex}\), with $L_h\le 2w_-(x)\zeta_{\rm ex}$. Therefore,
\begin{align}
&\|A(p,y)\grad h(p)-\grad h(y)\| \notag\\
&\le
\|A(p,y)\grad h(p)-P_{p\to y}\grad h(p)\|
+\|P_{p\to y}\grad h(p)-\grad h(y)\| \notag\\
&\le (L_A G_h+L_h)d(p,y),
\qquad (p,y)\in\mathcal D_\rho .
\label{eq:A_grad_h_rewrite}
\end{align}
Now $y_{k+1}\in \mathcal M_k\subset \overline{B_\rho(y_k)}$, so $(y_k,y_{k+1})\in\mathcal D_\rho$.
For the exact step,
\[
0=\grad g(y_{k+1})-A(y_k,y_{k+1})\grad h(y_k)-\tau_k\log_{y_{k+1}}(y_k).
\]
Hence, using \eqref{eq:A_grad_h_rewrite},
\begin{align*}
\|\grad f(y_{k+1},x)\|
&=\|\grad g(y_{k+1})-\grad h(y_{k+1})\| \\
&\le \|A(y_k,y_{k+1})\grad h(y_k)-\grad h(y_{k+1})\|
+\tau_k\,\|\log_{y_{k+1}}(y_k)\| \\
&\le (L_A G_h+L_h+\bar\tau)\,d_k.
\end{align*}
For the inexact step, let $r_k\triangleq\grad\Phi_k(y_{k+1})$. 

Then
$\|r_k\|\le \min\{\varepsilon_k,\zeta d_k\}\le \zeta d_k$, and
$r_k=\grad g(y_{k+1})-A(y_k,y_{k+1})\grad h(y_k)-\tau_k\log_{y_{k+1}}(y_k).$
Therefore,
\begin{align*}
\|\grad f(y_{k+1},x)\|
&\le \|r_k\|+
\|A(y_k,y_{k+1})\grad h(y_k)-\grad h(y_{k+1})\|+
\tau_k\,\|\log_{y_{k+1}}(y_k)\| \\
&\le (L_A G_h+L_h+\bar\tau+\zeta)\,d_k.
\end{align*}
\end{proof}

\begin{theorem}\label{thm:analytic_rates_combined}
Let Assumptions~\ref{assum:bounded_curv}--\ref{ass:analytic} hold.
Let $\{y_k\}\subset \mathcal M_{\mathrm{ex}}$ be the sequence generated by
Algorithm~\ref{alg:rpdca}, and define
\[
S\triangleq\Bigl\{\bar y\in\mathcal M_{\mathrm{ex}}:\ \exists\,k_j\to\infty\ \text{with}\ y_{k_j}\to \bar y\Bigr\},
\qquad d_k\triangleq d(y_k,y_{k+1}).
\]
Then $S=\{y_\star\}$ for some stationary point $y_\star$ of $f(\cdot,x)$, and therefore
$y_k\to y_\star$. Moreover, if $\vartheta\in[0,1)$ is a KL exponent of $f(\cdot,x)$ at $y_\star$, then:
\begin{itemize}[leftmargin=2em,itemsep=1pt,topsep=1pt]
    \item if $\vartheta=0$, the sequence converges in finite time;
    \item if $\vartheta\in(0,1/2]$, then  $d(y_k,y_\star)=\mathcal O(\rho^k)$ for some $\rho\in(0,1)$;
    \item if $\vartheta\in(1/2,1)$, then $    d(y_k,y_\star)=\mathcal O\big(k^{-\frac{1-\vartheta}{2\vartheta-1}}\big)$.
\end{itemize}
\end{theorem}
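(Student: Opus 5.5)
We follow the standard Attouch--Bolte--Svaiter KL template, adapted to the manifold. It uses three ingredients already in hand. The first is the sufficient-decrease inequality $f(y_{k+1},x)\le f(y_k,x)-\kappa d_k^2$ from the proof of Theorem~\ref{thm:accumulation}, with $\kappa=1/2$ or $1/4$. The second is the relative-error bound $\|\grad f(y_{k+1},x)\|\le C_{\mathrm{rel}}d_k$ of Lemma~\ref{lem:relative_error_analytic}. The third is the KL property at every point of $\operatorname{int}(\mathcal M_{\mathrm{ex}})$ from Corollary~\ref{cor:kl}.

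\emph{Step 1: limit set.} By Theorem~\ref{thm:accumulation}, $S$ is nonempty and consists of stationary points. Since $\{y_k\}\subset\mathcal S_0\subset\operatorname{int}(\mathcal M_{\mathrm{ex}})$ is compact, $S\subset\mathcal S_0$. Moreover $d_k\to0$, so $S$ is compact and connected. The values $f(y_k,x)$ are nonincreasing and bounded below by $f_*$, so they converge to some $f^\star$, and by continuity $f\equiv f^\star$ on $S$. If $f(y_K,x)=f^\star$ for some $K$, decrease forces $d_k=0$ for $k\ge K$ and we are done. Otherwise $f(y_k,x)>f^\star$ for all $k$. We then use a uniformized KL property on the compact set $S$: cover $S$ by finitely many KL neighborhoods from Corollary~\ref{cor:kl}, and take the minimal $\delta$ and a common desingularizer (the worst exponent and smallest constant). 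This yields $\varepsilon,\delta>0$ and a concave $\varphi$ with $\varphi'(f(y,x)-f^\star)\|\grad f(y,x)\|\ge1$ whenever $\operatorname{dist}(y,S)<\varepsilon$ and $f^\star<f(y,x)<f^\star+\delta$. Both conditions hold for all $k\ge k_0$.

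\emph{Step 2: finite length.} Set $\Delta_k\triangleq\varphi(f(y_k,x)-f^\star)$. Concavity of $\varphi$, the decrease inequality, and KL applied at $y_k$ together with the relative error bound $\|\grad f(y_k,x)\|\le C_{\mathrm{rel}}d_{k-1}$ give
\[
\Delta_k-\Delta_{k+1}\ge \varphi'(f(y_k,x)-f^\star)\,\kappa d_k^2\ge \frac{\kappa d_k^2}{C_{\mathrm{rel}}d_{k-1}}.
\]
Hence $d_k^2\le (C_{\mathrm{rel}}/\kappa)(\Delta_k-\Delta_{k+1})d_{k-1}$. Applying AM--GM gives $2d_k\le d_{k-1}+(C_{\mathrm{rel}}/\kappa)(\Delta_k-\Delta_{k+1})$. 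Summing telescopically, $\sum_k d_k<\infty$. Since $d(y_k,y_l)\le\sum_{j=k}^{l-1}d_j$, the sequence $\{y_k\}$ is Cauchy in the complete space $\Omega$. Therefore $S=\{y_\star\}$ with $y_\star$ stationary.

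\emph{Step 3: rates.} Let $\varphi(s)=s^{1-\vartheta}/(c(1-\vartheta))$ and $r_k\triangleq f(y_k,x)-f^\star$. Combining KL, relative error and decrease gives
\[
r_k^{2\vartheta}\le c^{-2}C_{\mathrm{rel}}^2d_{k-1}^2\le C(r_{k-1}-r_k).
\]
We treat the three ranges of $\vartheta$ separately.
\begin{itemize}
\item If $\vartheta=0$: as long as $r_k>0$, the difference $r_{k-1}-r_k$ is bounded below by $1/C$. This can only happen finitely often, so the sequence terminates in finitely many steps.
\item If $\vartheta\in(0,1/2]$: eventually $r_k\le1$, so $r_k\le r_k^{2\vartheta}\le C(r_{k-1}-r_k)$. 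This gives geometric decay of $r_k$.
\item If $\vartheta\in(1/2,1)$: the standard comparison of $r_k^{1-2\vartheta}$ increments gives $r_k=O(k^{-1/(2\vartheta-1)})$.
\end{itemize}
To transfer these rates to $y_k$, we use $d(y_k,y_\star)\le\sum_{j\ge k}d_j$. The Step 2 inequality, summed from $k$ onward, bounds this tail by $d_{k-1}+C'\varphi(r_k)$. Finally, $d_{k-1}\le\sqrt{r_{k-1}/\kappa}$ by the decrease inequality. In the linear case this gives $O(\rho^k)$. In the sublinear case both terms are $O(k^{-(1-\vartheta)/(2\vartheta-1)})$; the $d_{k-1}$ term is even smaller.

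\emph{Main obstacle.} We expect the delicate point to be Step 1, namely the uniformization of the KL inequality over $S$ with a single exponent and a single $\delta$. Corollary~\ref{cor:kl} is only pointwise, so we will take the maximum exponent over a finite subcover. Using $|f-f^\star|<1$, the inequality with a smaller exponent implies the one with a larger exponent, up to constants. A second point to check is that the relative error is evaluated at $y_k$ using $d_{k-1}$, which is why the index shift appears in Step 2.
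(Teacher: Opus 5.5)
Your proposal is correct. Steps 1--2 match the paper's argument: the same decrease and relative-error inequalities (with the index shift $\|\grad f(y_k,x)\|\le C_{\mathrm{rel}}d_{k-1}$), a uniformized KL inequality on the compact cluster set, the recursion $2d_k\le d_{k-1}+(C_{\mathrm{rel}}/\kappa)(\Delta_k-\Delta_{k+1})$, and finite length. You also handle the case $f(y_K,x)=f^\star$ for some $K$. The paper's proof passes over this case, even though its uniformized KL inequality is only valid where $f>\ell$.

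The genuine difference is Step 3. The paper does not derive the rates itself. It pulls the iterates back through a real-analytic chart around $y_\star$. It then uses bi-Lipschitz bounds and equivalence of Euclidean and Riemannian gradient norms to obtain Euclidean decrease and relative-error conditions with constants $a,b$. Finally it cites the Attouch--Bolte rate theorem and transfers the rates back through the chart.

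You stay intrinsic instead. The recursion $r_k^{2\vartheta}\le C(r_{k-1}-r_k)$ gives the three regimes for the function gap. The tail bound $d(y_k,y_\star)\le\sum_{j\ge k}d_j\le d_{k-1}+C'\varphi(r_k)$, together with $d_{k-1}\le\sqrt{r_{k-1}/\kappa}$, turns these into distance rates. Your exponent comparison, showing the $d_{k-1}$ term is subdominant in the sublinear case, is right.

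What each buys: your route is self-contained, needs no chart constants, and shows exactly where each rate comes from. The paper's route is shorter, but it applies a theorem stated for the Euclidean proximal algorithm to an abstract charted sequence.

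One thing to make explicit: in Step 3 the tail bound must be rederived with the desingularizer $\varphi(s)=s^{1-\vartheta}/(c(1-\vartheta))$ belonging to $y_\star$ itself, not the worst-exponent $\varphi$ from your finite subcover in Step 1. This is legitimate, because $y_k\to y_\star$ places $y_k$ in the KL neighborhood of $y_\star$ for all large $k$. As written, however, Step 3 silently reuses the Step 2 inequality.
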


\begin{proof}
Set $f_k\triangleq f(y_k,x)$. By the proof of  Theorem~\ref{thm:accumulation}, for every $k\ge0$
\begin{equation}\label{eq:descent_analytic_rewrite}
f_k-f_{k+1}\ge \kappa\,d_k^2.
\end{equation}
Hence $\{f_k\}$ is nonincreasing. Since $f$ is continuous on the compact
set $\mathcal M_{\mathrm{ex}}$, it is bounded below there, and thus $f_k\downarrow \ell$ for some
$\ell\in\mathbb R$. Again by Theorem~\ref{thm:accumulation}, every cluster point of $\{y_k\}$ is stationary for
$f(\cdot,x)$, and $\{y_k\}\subset \mathcal S_0\subset\operatorname{int}(\mathcal M_{\mathrm{ex}})$, where
$\mathcal S_0$ is the compact strict sublevel set introduced in the proof of Theorem~\ref{thm:accumulation}. The cluster set $S$ is nonempty and compact.
If $\bar y\in S$, there exists a subsequence $y_{k_j}\to\bar y$, hence by continuity
$f(\bar y,x)=\lim_j f_{k_j}=\ell$. Therefore $f(\cdot,x)$ is constant on $S$.

By Corollary~\ref{cor:kl}, $f(\cdot,x)$ satisfies the KL property at every point of $S$. Since $S$ is compact and $f \to \ell$ on $S$, the standard uniformized KL lemma yields $\varepsilon>0$, $\eta>0$, and a concave function $\varphi:[0,\eta)\to[0,\infty)$ such that
\begin{equation}\label{eq:uniform_KL_rewrite}
\varphi'(f(y,x)-\ell)\,\|\grad f(y,x)\|\ge 1
\end{equation}
whenever $\operatorname{dist}(y,S)<\varepsilon$ and $\ell<f(y,x)<\ell+\eta$.
Because $\{y_k\}$ is contained in the compact set $\mathcal S_0$, one has
$\operatorname{dist}(y_k,S)\to0$; otherwise a subsequence staying a fixed positive distance
from $S$ would admit a further convergent subsequence with limit in~$S$, a contradiction.
Since also $f_k\to \ell$, estimate~\eqref{eq:uniform_KL_rewrite} holds at $y_k$ for all large enough~$k$.

Now reindex Lemma~\ref{lem:relative_error_analytic} to obtain a constant $C_{\mathrm{rel}}>0$ such
that
\begin{equation}\label{eq:relative_error_rewrite}
\|\grad f(y_k,x)\|\le C_{\mathrm{rel}}\,d_{k-1}
\end{equation}
for all sufficiently large $k$. By concavity of $\varphi$,
\[
\varphi(f_k-\ell)-\varphi(f_{k+1}-\ell)
\ge \varphi'(f_k-\ell)(f_k-f_{k+1}).
\]
Combining this with \eqref{eq:uniform_KL_rewrite},
\eqref{eq:descent_analytic_rewrite}, and \eqref{eq:relative_error_rewrite}, we obtain, for all
large enough $k$,
\[
\varphi(f_k-\ell)-\varphi(f_{k+1}-\ell)
\ge \frac{f_k-f_{k+1}}{\|\grad f(y_k,x)\|}
\ge \frac{\kappa}{C_{\mathrm{rel}}}\,\frac{d_k^2}{d_{k-1}}.
\]
Hence $d_k\le (1/2) d_{k-1} +({C_{\mathrm{rel}}}/{2\kappa})\bigl(\varphi(f_k-\ell)-\varphi(f_{k+1}-\ell)\bigr)$. Summing this inequality from $k=k_0+1$ to $N$, where $k_0$ is large enough for all previous estimates to hold, gives
\[
\frac12\sum_{k=k_0+1}^{N-1} d_k + d_N
\le \frac12 d_{k_0}
+\frac{C_{\mathrm{rel}}}{2\kappa}\,\varphi(f_{k_0+1}-\ell).
\]
Therefore $\sum_{k=0}^\infty d_k<\infty$. The sequence has finite length, so it is Cauchy and,
since $\mathcal M_{\mathrm{ex}}$ is compact, there exists $y_\star\in\mathcal M_{\mathrm{ex}}$ such that
$y_k\to y_\star$. Hence $S=\{y_\star\}$. As every cluster point is stationary by
Theorem~\ref{thm:accumulation}, we also have $\grad f(y_\star,x)=0$.

It remains to derive the rates. By Corollary~\ref{cor:kl}, there exist a neighborhood $U$ of
$y_\star$, constants $c>0$, $\vartheta\in[0,1)$, and $\delta>0$ such that
\[
\|\grad f(y,x)\|\ge c\,|f(y,x)-f(y_\star,x)|^{\vartheta},
\qquad y\in U,\ |f(y,x)-f(y_\star,x)|<\delta.
\]
Choose a real-analytic chart $\psi:U\to V\subset\mathbb R^n$ around $y_\star$, set
$z_k\triangleq\psi(y_k)$ and $\tilde f\triangleq f\circ\psi^{-1}$. After shrinking $U$ if necessary, the chart and
its inverse are bi-Lipschitz on $U$, and the Euclidean and Riemannian gradient norms are
uniformly equivalent there. Consequently, for all large enough $k$,
\[
\tilde f(z_k)-\tilde f(z_{k+1})\ge a\,\|z_{k+1}-z_k\|^2,
\qquad
\|\nabla \tilde f(z_k)\|\le b\,\|z_k-z_{k-1}\|
\]
for some constants $a,b>0$. Thus the charted sequence $\{z_k\}$ satisfies the standard
Euclidean sufficient-decrease and relative-error conditions, and the classical KL rate theorem
(see, e.g.,~\cite[Theorem~2]{attouch2009convergence}) applies to $\tilde f$ at
$z_\star\triangleq\psi(y_\star)$. Therefore:
\begin{itemize}[leftmargin=2em,itemsep=1pt,topsep=1pt]
    \item if $\vartheta=0$, then $z_k$ (and hence $y_k$) is eventually constant;
    \item if $\vartheta\in(0,1/2]$, then $z_k$ converges $R$-linearly to $z_\star$;
    \item if $\vartheta\in(1/2,1)$, then $\|z_k-z_\star\|=\mathcal O\big(k^{-\frac{1-\vartheta}{2\vartheta-1}}\big)$.
\end{itemize}
Since the chart is bi-Lipschitz, the same rate statements hold for $d(y_k,y_\star)$.
\end{proof}

\section{Numerical Analysis}\label{sec:numerics}

In this section, we provide numerical examples to illustrate the efficiency of our approach. In addition to global FRIDA, we also test local FRIDA, as defined in Remark~\ref{rem:global_local_frida}.

\subsection{Regression on the Sphere}
We consider Fr\'echet regression with predictor \(X\in\mathbb R\) and response \(Y\in S^2\), where $S^2=\{x\in\mathbb{R}^3:\|x\|=1\}$, with tangent space at $p$ defined as $T_pS^2=\{v\in\mathbb{R}^3:\langle v,p\rangle=0\}$, and geodesic distance $d(x,y)=\arccos(\langle x,y\rangle)$.

\subsubsection{Regression on Geodesic Data} 

For illustration, we first consider a simple case with three responses on a common geodesic segment of \(S^2\), with predictors \(\{0,0.5,1\}\) and responses
\(\{y_0,y_{0.5},y_1\}\). We choose an extrapolating test predictor
\(x_{\rm test}=1.87\), which lies outside the sufficient predictor region where
our theory guarantees interiority of minimizers. Nevertheless, the objective is
well defined on the chosen normal ball, and a minimizer is observed
numerically in this example. Thus, this experiment stress-tests
Algorithm~\ref{alg:rpdca} beyond the conservative sufficient safe-region
condition.
\begin{figure}[ht]
    \centering
    \includegraphics[width=\linewidth]{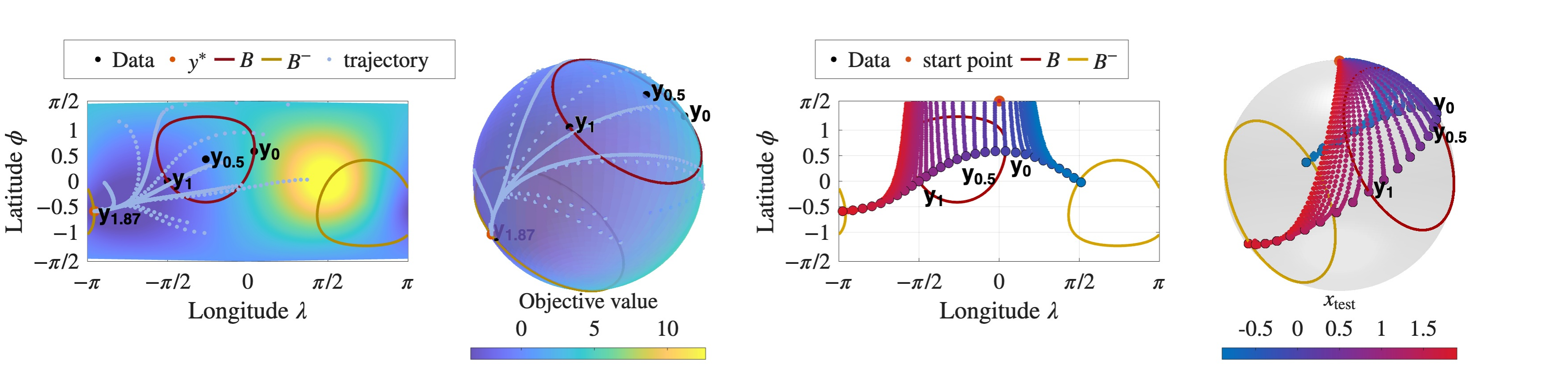}
    \makebox[\textwidth][c]{%
        \makebox[0.23\textwidth][c]{\text{(a)}}%
        \makebox[0.23\textwidth][c]{\text{(b)}}%
        \makebox[0.23\textwidth][c]{\text{(c)}}%
        \makebox[0.23\textwidth][c]{\text{(d)}}%
    }
     
    \caption{Global Fr\'echet Regression on data lying on a geodesic of \(S^2\). 
    Panels (a)--(b) show the weighted Fr\'echet objective \(f(\cdot,x)\) in longitude--latitude coordinates and on the sphere. Black points are the data \(\{y_0,y_{0.5},y_1\}\), the orange point is a stationary point, the light blue curves are trajectories from random initializations, and the red and gold curves denote the boundaries of \(B\) and \(B^{-}\). 
    Panels (c)--(d) show the geodesic regression with various \(x_{\mathrm{test}}\). The red dot is the start point. The color of the trajectories is proportional to  \(x_{\mathrm{test}}\). }
    \label{fig:dc_sphere_geodesic}
\end{figure} Figure~\ref{fig:dc_sphere_geodesic}(a)--(b) shows the weighted Fr\'echet objective on \(S^2\). Panel (a) gives the longitude--latitude projection, and Panel (b) shows the corresponding visualization on the sphere. The black data points correspond to \(\{y_0,y_{0.5},y_1\}\), and the orange point marks a stationary point. The light blue curves are trajectories generated by Algorithm~\ref{alg:rpdca} from random initializations. The red and gold curves denote the boundaries of the geodesic ball \(B\subset \mathcal M_r\) and its antipodal counterpart \(B^{-}\), respectively. The color map represents the functional value \(f(\cdot,x)\). In both views, the trajectories from different initial points converge to the same stationary point, indicating the stability of the proposed method in the selected region.

Additionally, under the same setup, we test Algorithm~\ref{alg:rpdca} when finding the response $y$ corresponding to different predictors $ x$ varying over $[-0.9,1.9]$ with a fixed initialization point. 

Figure~\ref{fig:dc_sphere_geodesic}(c)--(d) shows the geodesic regression
experiment with several test predictor values \(x_{\mathrm{test}}\). Panel (c) gives the longitude--latitude projection, and Panel (d) shows the corresponding visualization on the sphere. The red dot marks the start point, and the color of each trajectory corresponds to the value of \(x_{\mathrm{test}}\). In both views, the estimated points converge to stationary points of the corresponding weighted Fr\'echet objectives, showing that the method consistently identifies the regression estimates across different test predictor values.

For the next illustration, we add noise under the same geodesic ground-truth configuration as in the previous experiments. The predictors are sampled at 20 equally spaced values in [0,1], and the corresponding responses are the 20 points along the geodesic at those values. To generate noisy observations, for each response point \(y_i\in S^2\), we sample a Gaussian vector \(z_i\in\mathbb{R}^3\), project it onto the tangent space \(T_{y_i}S^2\), scale the projected vector by a noise level parameter \(\sigma=0.1\), and map it back to the sphere using the exponential map. The resulting observations are $
Y_i=\exp_{y_i}(v_i)$, and $v_i=\sigma\bigl(z_i-\langle z_i,y_i\rangle y_i\bigr)$, where \(z_i\sim\mathcal{N}(0,I_3)\). Figure~\ref{fig:dc_sphere_noise} (a)--(b) shows the noisy geodesic regression experiment on \(S^2\). Panel (a) gives the longitude--latitude projection, and Panel (b) shows the corresponding visualization on the sphere. The true geodesic curve is plotted in blue, the DCA regression estimate is shown in orange, and the noisy manifold-valued observations are displayed in green. The initialization point is marked by a purple star. In both views, the estimated curve closely follows the underlying geodesic despite the intrinsic noise in the observations, indicating that the regression procedure recovers the main geodesic structure of the data.
\subsubsection{Regression on Spiral Data}
We next consider a spherical regression example with a ground-truth curve
\[
m(x)=\Bigl(\sqrt{1-x^2}\cos(\pi x),\ \sqrt{1-x^2}\sin(\pi x),\ x\Bigr),
\qquad x\in(0,1),
\]
which forms a spiral-like path on \(S^2\). Observations are generated by adding tangent-space noise at \(m(X_i)\) and mapping back to the sphere using the exponential map. We then compare local and global Fr\'echet regression fits. 

Figure~\ref{fig:dc_sphere_noise}(c)--(d) shows the spiral-noise experiment on \(S^2\). Panel (c) gives the longitude--latitude projection, and Panel (d) maps the same curves onto the sphere. The true spiral response is shown in blue, the local Fr\'echet estimate in orange, and the global Fr\'echet estimate in purple. Noisy observations are plotted in green, and the initialization is marked by a star. Compared with the global estimator, the local estimator follows the spiral more closely and captures its local variation, while the global estimator recovers the overall trend but smooths out part of the local geometric
structure.
\begin{figure}[ht]
    \centering
    \includegraphics[width=\linewidth]{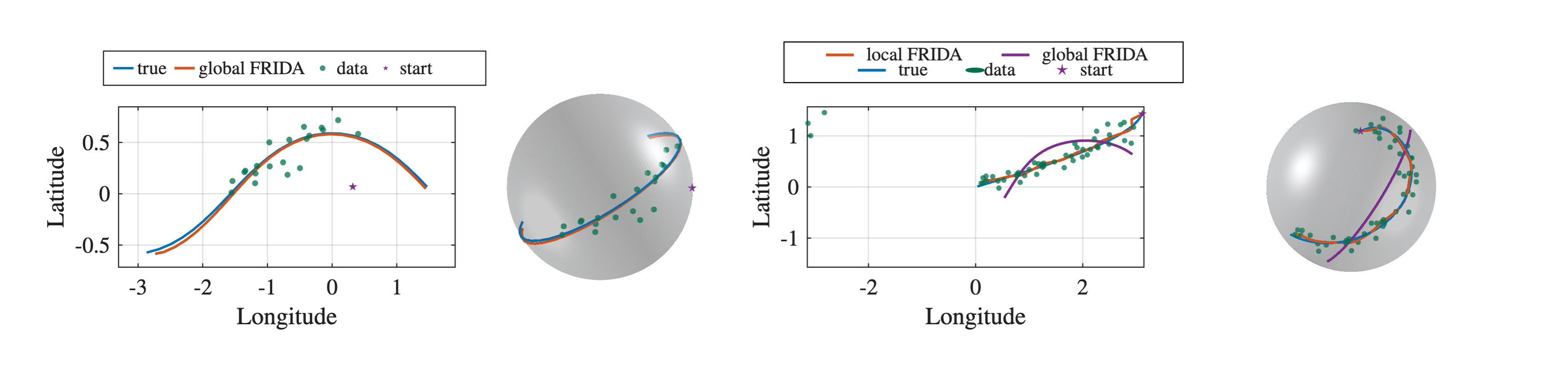}
    \makebox[\textwidth][c]{%
        \makebox[0.23\textwidth][c]{\text{(a)}}%
        \makebox[0.23\textwidth][c]{\text{(b)}}%
        \makebox[0.23\textwidth][c]{\text{(c)}}%
        \makebox[0.23\textwidth][c]{\text{(d)}}%
    }
     
    \caption{
    Fr\'echet regression on noisy data on \(S^2\).
    Panels (a)--(b) show global Fr\'echet regression for noisy observations lying near a geodesic, in longitude--latitude coordinates and on the 3D sphere, respectively. Panels (c)--(d) compare local and global Fr\'echet regression for spiral-noise data in the same two views. Blue denotes the true curve, green the noisy observations, orange the FRIDA local estimate, and purple the global estimate or initialization point. The estimates recover the main spherical regression structure despite the presence of intrinsic noise.
    }
    \label{fig:dc_sphere_noise}
\end{figure}

\subsection{Comparison with GD on $S^2 \times S^1$}
We generate synthetic responses on the product manifold \(\mathcal M=S^2\times S^1\), writing each response as
\(y=(p,\theta)\), with \(p\in S^2\subset\mathbb R^3\) and \(\theta\in[0,2\pi)\). We use the product metric $g_{\mathcal M}=g_{S^2}^{\mathrm{round}}\oplus g_{S^1}^{\mathrm{flat}}$, where the two factors carry the standard round and angular metrics.

We take \(n=40\) equally spaced predictors \(x_i\in[0,1]\). The noiseless regression function starts from \(y_{\mathrm{base}}=((0,0,1),0)\). 

Its spherical component moves from the north pole in the tangent direction \(e=(1,0,0)\in T_{(0,0,1)}S^2\), and its circular component evolves on \(S^1\). Using $\alpha(x)=1.40(3x^2-2x^3)$, which smooths the spherical motion near the endpoints, we define $m(x)
    =
    \left(
        \big(\sin\alpha(x),0,\cos\alpha(x)\big),
        0.80\pi x \ \mathrm{mod}\ 2\pi
    \right)$, for $x\in[0,1]$.
Since \(\max_x\alpha(x)=1.40<\pi/2\), the noiseless spherical component remains
in the open hemisphere centered at \((0,0,1)\).

Intrinsic noise is added independently to each factor. On \(S^2\), a Gaussian vector in \(\mathbb R^3\) is projected onto
\(T_{p_{\mathrm{true}}(x_i)}S^2\), normalized, scaled by a Gaussian amplitude with standard deviation \(\sigma_{S^2}=0.045\), and mapped back by the exponential map. On \(S^1\), Gaussian angular noise with standard deviation \(\sigma_{S^1}=0.035\) is added. This gives noisy responses \(y_i=(p_i,\theta_i)\in S^2\times S^1\). The product manifold satisfies \(0\leq K\leq 1\), and along the chosen regression curve, the effective curvature is bounded above by approximately \(0.41\).

For the optimization comparison, we keep only the test predictors \(x_{\mathrm{test}}\) whose global Fr\'echet regression weights contain at least one negative value. For each such \(x_{\mathrm{test}}\), GD and FRIDA solve the weighted Fr\'echet problem from the same noisy response. GD uses at most \(500\) iterations, while FRIDA uses at most \(500\) outer iterations and \(1000\) inner iterations per subproblem, with all gradient tolerances set to \(10^{-8}\). We report the final objective value, outer iteration counts, FRIDA inner iteration counts, and the final and best gradient norms. 

Figure~\ref{fig:s2s1_gd_dca_comparison}(a) summarizes the GD--FRIDA comparison over test predictors \(x_{\mathrm{test}}\) whose global weights include negative values. The four panels report the best Riemannian gradient norm \(\min_k\|\operatorname{grad}F(y_k)\|\), final objective value, outer iteration count, and FRIDA inner iteration count. FRIDA typically achieves lower gradient norms and requires far fewer outer iterations than GD, while both methods achieve nearly identical final objective values. The FRIDA inner counts remain well below the prescribed limit. Figure~\ref{fig:s2s1_gd_dca_comparison}(b) shows representative convergence trajectories. For readability, the gradient-norm Panel shows only the first 100 GD iterations, together with the full FRIDA trajectory. FRIDA reaches a small gradient norm much faster than GD. The objective-value Panel shows that both methods decrease the objective and converge to the same final value, with the displayed final values agreeing with the shown precision.

\begin{figure}[t]
    \centering

    \begin{subfigure}[t]{0.98\textwidth}
        \centering
        \includegraphics[width=\textwidth]{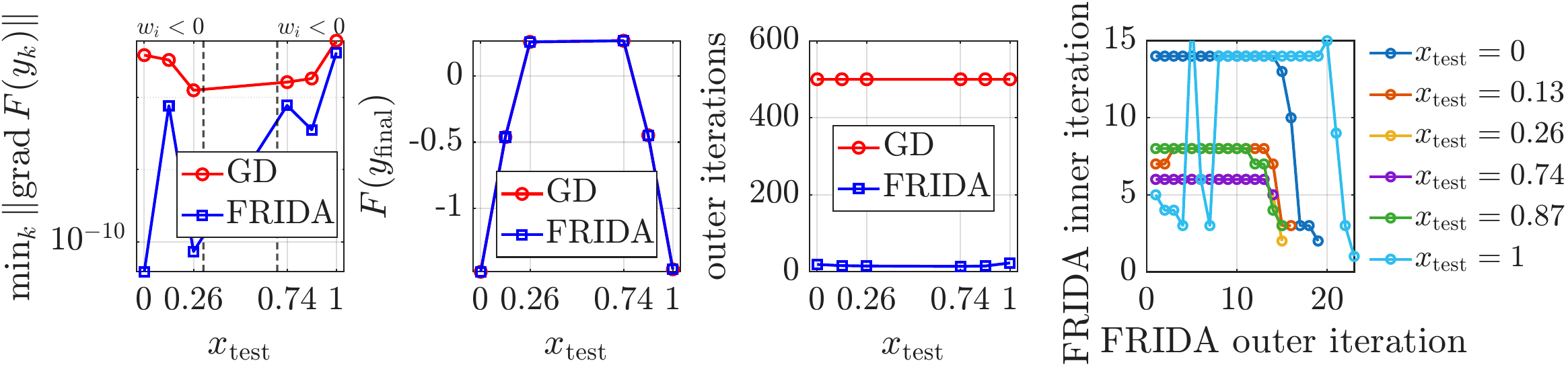}
       
        \caption{
        Summary of selected test predictors with negative global weights.
        From left to right, the Panels report the best Riemannian gradient norm,
        final objective value, number of outer iterations, and FRIDA inner iteration
        counts.       
        }
        \label{fig:s2s1_gd_dca_summary}
    \end{subfigure}

    \vspace{0.6em}

    \begin{subfigure}[t]{0.98\textwidth}
        \centering
        \includegraphics[width=\textwidth]{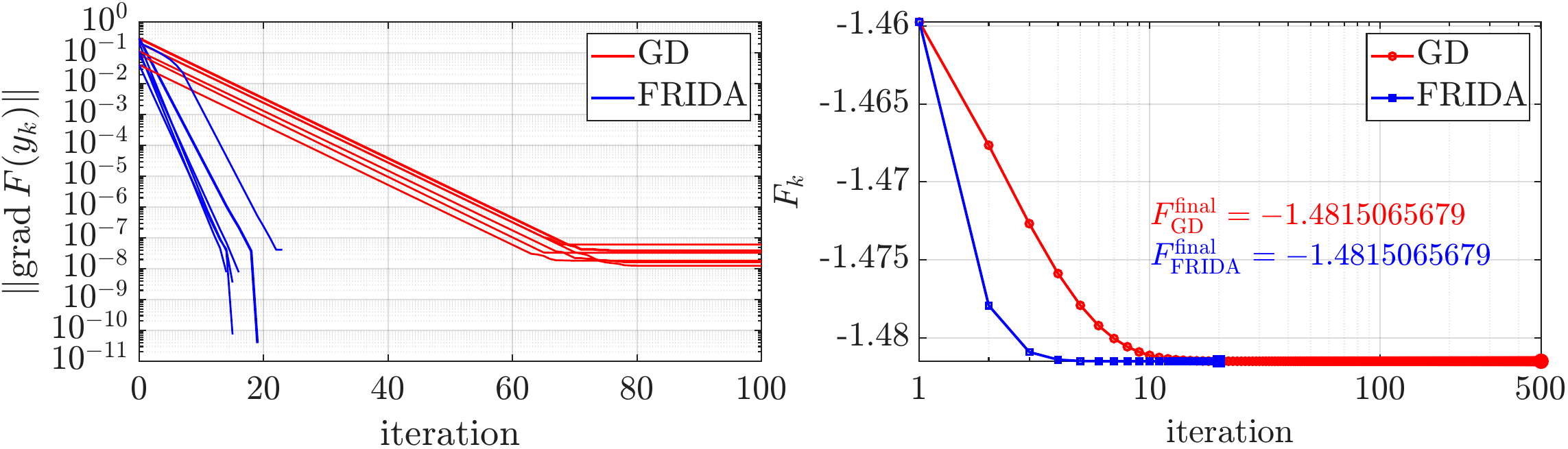}
       
        \caption{
        Representative convergence trajectory.
        The left Panel shows the first 100 GD gradient-norm iterations together
        with the full FRIDA gradient-norm trajectory. The right Panel shows the
        objective-value histories and the final objective values reached by both
        methods.       
        }
        \label{fig:s2s1_gd_dca_trajectory}
    \end{subfigure}

    \caption{
    Comparison between Riemannian gradient descent and FRIDA on \(S^2\times S^1\). The top row summarizes performance over selected test predictors \(x_{\mathrm{test}}\) whose global regression weights include negative values. FRIDA attains smaller best gradient norms and requires substantially fewer outer iterations than GD, while both methods obtain nearly identical final objective values. The FRIDA inner iteration counts remain well below the prescribed inner iteration budget, indicating a stable solution of the local surrogate subproblems. The bottom row shows a representative convergence trajectory: FRIDA achieves a lower gradient norm much faster than GD, and both methods converge to the same final objective value.
    }
    \label{fig:s2s1_gd_dca_comparison}
\end{figure}

\begin{remark}[Why FRIDA can outperform gradient descent] FRIDA is not uniformly better than gradient descent for all nonconvex problems, but it is well suited to the weighted Fr\'echet objective considered here. While GD uses only first-order information from the full objective \(F\) and may need small steps in ill-conditioned signed-distance landscapes, FRIDA exploits the decomposition \(F=g-h\). By linearizing \(-h\) and minimizing a locally convex surrogate involving \(g\), each FRIDA update can make more
structured progress than a single gradient step.
\end{remark}

\subsection{Spiral Regression on the Torus}

In the final experiment, we consider regression on an embedded torus with
angular coordinates \((\theta,\phi)\in S^1\times S^1\), where \(\theta\)
parametrizes the central circle and \(\phi\) the cross-sectional circle. For
major radius \(R\) and minor radius \(r\), we use $F(\theta,\phi)=
\bigl((R+r\cos\phi)\cos\theta,\,
(R+r\cos\phi)\sin\theta,\,
r\sin\phi\bigr)$,
whose induced metric is $ds^2=(R+r\cos\phi)^2d\theta^2+r^2d\phi^2$.
Thus, motion in the \(\theta\)-direction is scaled by \(R+r\cos\phi\), while
motion in the \(\phi\)-direction is scaled by \(r\).

The sectional curvature is $K(\phi)=\frac{\cos\phi}{r(R+r\cos\phi)}$. Hence, the outer region has positive curvature, the inner region has negative curvature, and the transition regions near \(\phi=\pi/2\) and \(\phi=3\pi/2\) have curvature close to zero.

Unlike the sphere experiment, the torus experiment uses approximate local
geometry, since exact intrinsic operations on the embedded torus generally require geodesic boundary-value solves \cite{jantzen2010torusGeodesics}. We
work in a small angular patch inside the normal/convexity regime, where squared
distances are smooth and locally convex, metric variation is mild, and wrapping
or cut-locus effects are avoided. We approximate
\[
d^2\bigl((\theta_1,\phi_1),(\theta_2,\phi_2)\bigr)
\approx
(R+r\cos\phi_{\rm mid})^2(\theta_1-\theta_2)^2
+
r^2(\phi_1-\phi_2)^2,
\]
where \(\phi_{\rm mid}\) is the midpoint angular coordinate, and use the
associated local orthonormal frame for logarithm/exponential maps. This matches
the metric quadratic approximation to squared geodesic distance up to
higher-order curvature terms
\cite[Sec.~4]{viaclovsky_riemannian_geometry_2011}. Thus, the torus experiment
is a robustness study under approximate local geometry, not an exact intrinsic
Fr\'echet regression experiment.

\subsubsection{Local torus with global weights}

We first study global Fr\'echet regression on a local patch of the embedded
torus. The data are generated in angular coordinates \((\theta,\phi)\). We set $\theta_0=0, \phi_0=\frac{\pi}{2}, R=2.0, r=0.7$. At the patch center, the \(\theta\)-metric scale is $A_0=R+r\cos\phi_0=R$, which converts angular displacement in \(\theta\) into local arclength. For \(x_i\in[0,1]\), the noiseless curve is chosen as a straight line in approximate orthonormal coordinates, with
\(L_{\mathrm{total}}=1.45\) and \(\alpha=\pi/3\):
\[
    \theta(x_i)
    =
    \theta_0+
    \frac{L_{\mathrm{total}}\cos\alpha}{A_0}
    \left(x_i-\frac12\right),
    \qquad
    \phi(x_i)
    =
    \phi_0+
    \frac{L_{\mathrm{total}}\sin\alpha}{r}
    \left(x_i-\frac12\right).
\]
The responses are obtained by mapping \((\theta(x_i),\phi(x_i))\) to the
embedded torus in \(\mathbb R^3\). 

This curve stays in a controlled local patch
while crossing both positive and negative curvature regions; along it, $-0.767 \le K(\phi) \le 0.397$. Noisy observations are generated by
\[
    \theta_i^{\mathrm{obs}}
    =
    \theta(x_i)+\frac{\sigma_{\mathrm{intrinsic}}}{A_0}\varepsilon_i^\theta,
    \qquad
    \phi_i^{\mathrm{obs}}
    =
    \phi(x_i)+\frac{\sigma_{\mathrm{intrinsic}}}{r}\varepsilon_i^\phi,
    \qquad
    \varepsilon_i^\theta,\varepsilon_i^\phi\sim N(0,1),
\]
with \(\sigma_{\mathrm{intrinsic}}=0.04\). This scaling makes the noise approximately isotropic under the local metric \[ds^2\approx A_0^2d\theta^2+r^2d\phi^2.\] The curve remains in the local normal/convexity regime described above, and the regression weights are the global affine weights. 

Fig.~\ref{fig:torus_regression_combined}(a)--(b) shows global Fr\'echet regression on a local patch of the embedded torus.  Panel~(a) gives the angular-coordinate view \((\theta,\phi)\), where black points denote noisy observations, the blue curve is the true response curve, the red curve is the global regression estimate, and the purple marker is the optimization start point. Panel~(b) maps the same objects onto the embedded torus in \(\mathbb R^3\). The estimate closely matches the true curve within the local patch, indicating that the method accurately recovers the trajectory in this controlled setting.
\begin{figure}[t]
    \centering
    \includegraphics[width=\textwidth]{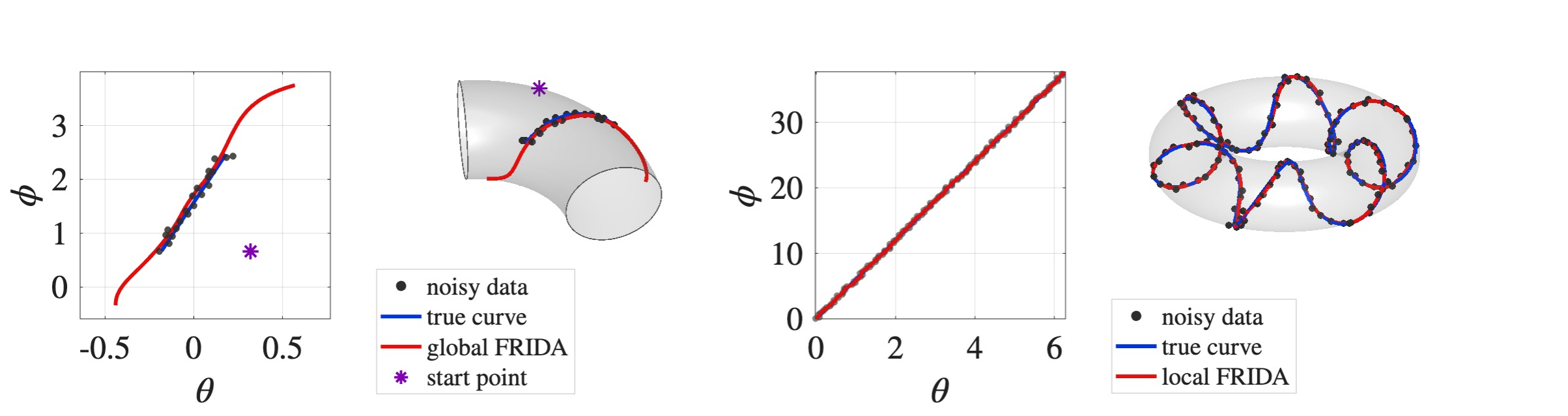}

    \vspace{-0.5em}
    \makebox[\textwidth][c]{%
        \makebox[0.23\textwidth][c]{\text{(a)}}%
        \makebox[0.23\textwidth][c]{\text{(b)}}%
        \makebox[0.23\textwidth][c]{\text{(c)}}%
        \makebox[0.23\textwidth][c]{\text{(d)}}%
    }
\vspace{-0.5cm}
    \caption{
    Fr\'echet regression experiments on the embedded torus.
    Panels (a)--(b) show global Fr\'echet regression on a controlled local patch:
    (a) gives the angular-coordinate view and
    (b) maps the same objects onto the local embedded torus patch in \(\mathbb R^3\).
    Panels (c)--(d) show local Fr\'echet regression along a closed curve covering
    the full torus:
    (c) gives the unwrapped angular-coordinate view and
    (d) shows the corresponding embedded curve in \(\mathbb R^3\).
    In all panels, black points denote noisy observations, blue curves denote
    the true trajectory, and red curves denote the DCA-based regression
    estimate. Purple stars mark the initialization points in panels (a)--(b). In both settings, the regression estimate closely follows
    the true trajectory from noisy observations.   
    }
    \label{fig:torus_regression_combined}
\end{figure}

\subsubsection{Global torus experiment with local weights}

We next consider regression on the full embedded torus, where the true response curve wraps around the entire surface. We set \(R=2.0\), \(r=0.7\), \(x\in[0,6]\), and generate the noiseless curve in angular coordinates with \(\theta_0=0\), \(\phi_0=0\), \(N_\theta=1\), and \(N_\phi=6\):
\[
    \theta(x)=\theta_0+2\pi N_\theta
    \frac{x-x_{\min}}{x_{\max}-x_{\min}},
    \qquad
    \phi(x)=\phi_0+2\pi N_\phi
    \frac{x-x_{\min}}{x_{\max}-x_{\min}}.
\]
Thus, the curve winds once in the central direction and six times in the
cross-sectional direction. Since it covers the full cross-sectional circle, it
passes through the full curvature range of the embedded torus. For
\(R=2.0\) and \(r=0.7\),
$-1.099 \le K \le 0.529$,
so the trajectory repeatedly visits positive, zero, and negative curvature
regions.

We add Gaussian noise to the unwrapped angles and then wrap modulo \(2\pi\):
\[
    \theta_i^{\mathrm{obs}}
    =
    \theta(x_i)+0.035\,\varepsilon_i^\theta,
    \qquad
    \phi_i^{\mathrm{obs}}
    =
    \phi(x_i)+0.035\,\varepsilon_i^\phi,
    \qquad
    \varepsilon_i^\theta,\varepsilon_i^\phi\sim N(0,1).
\]

Although the trajectory is global, each regression is computed locally. For a
test point \(x_0\), we use observations in a local predictor window of
half-width
\[
    \Delta x_{\mathrm{loc}}(x_0)
    =
    0.85\,\rho_{\mathrm{safe}}/
    \max\{\operatorname{speed}(x_0),10^{-12}\},
    \qquad
    \rho_{\mathrm{safe}}=0.55,
\]
with \(\Delta x_{\mathrm{loc}}\in[0.04,0.35]\), and set
\(h_{\mathrm{loc}}=0.45\Delta x_{\mathrm{loc}}\). Local observations are chosen using periodic distance in \(x\), with at least
8 observations per subproblem, and normalized Gaussian weights. This keeps each
local regression within an approximate intrinsic radius controlled by
\(\rho_{\mathrm{safe}}\), so that each subproblem remains in the local
normal/convexity regime described above, even though the full curve covers the
entire torus. 

Fig.~\ref{fig:torus_regression_combined}(c)--(d) shows local Fr\'echet
regression along a closed curve on the full embedded torus. Panel~(c) gives the
unwrapped angular-coordinate view, where the curve appears nearly linear because
it winds once in \(\theta\) and six times in \(\phi\). Black points denote
noisy observations, the blue curve is the true trajectory, and the red curve is
the local DCA estimate.
Panel~(d) maps the same curves onto the embedded torus in~\(\mathbb R^3\).
The close agreement between the orange and blue curves shows that the proposed
method recovers the trajectory despite angular wrapping and noise.

\section{Conclusions} \label{sec:conclusion}
We studied signed Fr\'echet regression on complete Riemannian manifolds with two-sided sectional-curvature bounds and developed FRIDA, a proximal DC framework for its computation. Since the regression weights may have mixed signs, the objective is an affine combination of squared distances and is generally nonconvex, with possible nonsmoothness near cut loci. By working on strongly convex normal balls with an adaptive proximal term, we showed that the local subproblems are well posed and strongly geodesically convex. Our analysis separates the two curvature bounds: the upper bound controls convexity radii and lower Hessian bounds, while the lower bound controls Jacobi-field growth and the smoothness constants for the logarithm linearization. Under explicit signed-weight conditions, we proved the existence and interiority of minimizers, descent of the exact and inexact FRIDA iterations, stationarity of all accumulation points, and the complexity bound \(O((N+1)^{-1/2})\). In the real-analytic case, the Kurdyka--{\L}ojasiewicz framework further gives full-sequence convergence with rates determined by the KL exponent. These results provide a theoretical foundation for FRIDA-type methods for signed Fr\'echet regression under local two-sided curvature control. Future work includes sharpening local rates, developing statistically consistent inexact solvers, and extending the framework to broader metric or stratified spaces where only local comparison estimates are available.

\bibliographystyle{plain}
\bibliography{Ref}

@article{hansen1982large,
  title={Large sample properties of generalized method of moments estimators},
  author={Hansen, Lars Peter},
  journal={Econometrica: Journal of the econometric society},
  pages={1029--1054},
  year={1982},
  publisher={JSTOR}
}

@book{lee2018introduction,
  title={Introduction to {R}iemannian manifolds},
  author={Lee, John M},
  volume={2},
  year={2018},
  publisher={Springer}
}

@article{lin2024type,
  title={A Type of Nonlinear {F}r\'echet Regressions},
  author={Lin, Lu and Chen, Ze},
  journal={arXiv preprint arXiv:2403.17481},
  year={2024}
}

@article{bennett2023variational,
  title={The variational method of moments},
  author={Bennett, Andrew and Kallus, Nathan},
  journal={Journal of the Royal Statistical Society Series B: Statistical Methodology},
  volume={85},
  number={3},
  pages={810--841},
  year={2023},
  publisher={Oxford University Press US}
}

@misc{viaclovsky_pcmi_curvature,
  author       = {Jeff A. Viaclovsky},
  title        = {Critical Metrics for {R}iemannian Curvature Functionals},
  note         = {Expanded version of IAS/PCMI lecture notes, 2013},
  year         = {2013},
  howpublished = {\url{https://www.math.uci.edu/~jviaclov/lecturenotes/PCMI_Lectures_Final.pdf}}
}

@misc{viaclovsky_riemannian_geometry_2011,
  author       = {Jeff A. Viaclovsky},
  title        = {Math 865: Topics in {R}iemannian Geometry},
  year         = {2011},
  note         = {Course notes},
  howpublished = {\url{https://www.math.uci.edu/~jviaclov/courses/865_Fall_2011.pdf}}
}

@article{afsari2013convergence,
  title   = {On the Convergence of Gradient Descent for Finding the {R}iemannian Center of Mass},
  author  = {Afsari, Bijan and Tron, Roberto and Vidal, Ren{\'e}},
  journal = {SIAM Journal on Control and Optimization},
  volume  = {51},
  number  = {3},
  pages   = {2230--2260},
  year    = {2013},
  publisher = {SIAM}
}

@article{takatsu2011wasserstein,
  title   = {Wasserstein Geometry of {G}aussian Measures},
  author  = {Takatsu, Asuka},
  journal = {Osaka Journal of Mathematics},
  volume  = {48},
  number  = {4},
  pages   = {1005--1026},
  year    = {2011}
}

@inproceedings{altschuler2021averaging,
  title     = {Averaging on the {Bures--Wasserstein} Manifold: Dimension-Free Convergence of Gradient Descent},
  author    = {Altschuler, Jason M. and Chewi, Sinho and Gerber, Patrik and Stromme, Austin J.},
  booktitle = {Advances in Neural Information Processing Systems},
  volume    = {34},
  pages     = {22132--22145},
  year      = {2021}
}

@article{martinez2024convergence,
  title={Convergence and trade-offs in {R}iemannian gradient descent  and {R}iemannian proximal point},
  author={Mart{\'\i}nez-Rubio, David and Roux, Christophe and Pokutta, Sebastian},
  journal={arXiv preprint arXiv:2403.10429},
  year={2024}
}

@book{do1992riemannian,
author = {Carmo, Manfredo Perdigao do.},
address = {Boston},
booktitle = {{R}iemannian geometry},
isbn = {0817634908},
language = {eng},
lccn = {91037377},
publisher = {Birkhäuser},
series = {Mathematics. Theory and applications},
title = {{R}iemannian geometry / Manfredo do Carmo ; translated by Francis Flaherty.},
year = {1992},
}

@unpublished{hosseini2015convergence,
author = {{Seyedehsomayeh Hosseini}},
title = {Convergence of nonsmooth descent methods via {K}urdyka–{L}ojasiewicz inequality on {R}iemannian manifolds},
publisher = {Institut für Numerische Simulation (INS)},
year = 2015,
month = nov,
volume = 1523
}

@article{kurdyka2000proof,
  title={Proof of the gradient conjecture of {R}. {T}hom},
  author={Kurdyka, Krzysztof and Mostowski, Tadeusz and Parusi{\'n}ski, Adam},
  journal={Annals of Mathematics},
  pages={763--792},
  year={2000},
  publisher={JSTOR}
}

@article{attouch2009convergence,
  title={On the convergence of the proximal algorithm for nonsmooth functions involving analytic features},
  author={Attouch, Hedy and Bolte, J{\'e}r{\^o}me},
  journal={Mathematical Programming},
  volume={116},
  number={1},
  pages={5--16},
  year={2009},
  publisher={Springer}
}

@article{petersen2019frechet,
  title={{F}r{\'e}chet regression for random objects with Euclidean predictors},
  author={Petersen, Alexander and M{\"u}ller, Hans-Georg},
  journal={The Annals of Statistics},
  volume={47},
  number={2},
  pages={691--719},
  year={2019},
  publisher={JSTOR}
}

@article{zhou2022network,
  title={Network regression with graph {L}aplacians},
  author={Zhou, Yidong and M{\"u}ller, Hans-Georg},
  journal={Journal of Machine Learning Research},
  volume={23},
  number={320},
  pages={1--41},
  year={2022}
}

@article{nava2024ridge,
  title={Ridge Regression for Manifold-valued Time-Series with Application to Meteorological Forecast},
  author={Nava-Yazdani, Esfandiar},
  journal={arXiv preprint arXiv:2411.18339},
  year={2024}
}

@article{im2025local,
  title={Local {F}r{\'e}chet regression with spherical predictors},
  author={Im, Chang Jun and Jeon, Jeong Min and Park, Byeong U},
  journal={Electronic Journal of Statistics},
  volume={19},
  number={2},
  pages={5313--5367},
  year={2025},
  publisher={The Institute of Mathematical Statistics and the Bernoulli Society}
}

@article{torres2022multivariate,
  title={Multivariate manifold-valued curve regression in time},
  author={Torres-Signes, A and Fr{\'\i}as, MP and Ruiz-Medina, MD},
  journal={arXiv preprint arXiv:2208.12585},
  year={2022}
}

@article{lin2021total,
  title={Total variation regularized {F}r{\'e}chet regression for metric-space valued data},
  author={Lin, Zhenhua and M{\"u}ller, Hans-Georg},
  journal={The Annals of Statistics},
  volume={49},
  number={6},
  pages={3510--3533},
  year={2021},
  publisher={Institute of Mathematical Statistics}
}

@article{horst1999dc,
  title={{DC} programming: overview},
  author={Horst, Reiner and Thoai, Nguyen V},
  journal={Journal of Optimization Theory and Applications},
  volume={103},
  pages={1--43},
  year={1999},
  publisher={Springer}
}

@inproceedings{tuy1984global,
  title={Global minimization of a difference of two convex functions},
  author={Tuy, Hoang},
  booktitle={Selected Topics in Operations Research and Mathematical Economics: Proceedings of the 8th Symposium on Operations Research, Held at the University of Karlsruhe, West Germany August 22--25, 1983},
  pages={98--118},
  year={1984},
  organization={Springer}
}

@inproceedings{hiriart1985generalized,
  title={Generalized differentiability/duality and optimization for problems dealing with differences of convex functions},
  author={Hiriart-Urruty, J-B},
  booktitle={Convexity and Duality in Optimization: Proceedings of the Symposium on Convexity and Duality in Optimization Held at the University of Groningen, The Netherlands June 22, 1984},
  pages={37--70},
  year={1985},
  organization={Springer}
}

@article{le2018dc,
  title={{DC} programming and {DCA}: thirty years of developments},
  author={Le Thi, Hoai An and Pham Dinh, Tao},
  journal={Mathematical Programming},
  volume={169},
  number={1},
  pages={5--68},
  year={2018},
  publisher={Springer}
}

@inproceedings{faust2023bregman,
  title={A {B}regman divergence view on the difference-of-convex algorithm},
  author={Faust, Oisin and Fawzi, Hamza and Saunderson, James},
  booktitle={International Conference on Artificial Intelligence and Statistics},
  pages={3427--3439},
  year={2023},
  organization={PMLR}
}

@article{yao2023globally,
  title={A globally convergent difference-of-convex algorithmic framework and application to log-determinant optimization problems},
  author={Yao, Chaorui and Jiang, Xin},
  journal={arXiv preprint arXiv:2306.02001},
  year={2023}
}

@article{weber2022class,
  title={On a class of geodesically convex optimization problems solved via {E}uclidean MM methods},
  author={Weber, Melanie and Sra, Suvrit},
  journal={arXiv preprint arXiv:2206.11426},
  year={2022}
}

@article{bergmann2024difference,
  title={The difference of convex algorithm on {H}adamard manifolds},
  author={Bergmann, Ronny and Ferreira, Orizon P and Santos, Elianderson M and Souza, Jo{\~a}o Carlos O},
  journal={Journal of Optimization Theory and Applications},
  volume={201},
  number={1},
  pages={221--251},
  year={2024},
  publisher={Springer}
}

@article{wintraecken2015ambient,
  title={Ambient and intrinsic triangulations and topological methods in cosmology},
  author={Wintraecken, Mathijs},
  year={2015}
}

@incollection{ziller2014riemannian,
  title={{R}iemannian manifolds with positive sectional curvature},
  author={Ziller, Wolfgang},
  booktitle={Geometry of Manifolds with Non-negative Sectional Curvature: Editors: Rafael Herrera, Luis Hern{\'a}ndez-Lamoneda},
  pages={1--19},
  year={2014},
  publisher={Springer}
}

@misc{jantzen2010torusGeodesics,
  title={Geodesics on the Torus and Other Surfaces of Revolution Clarified Using Undergraduate Physics Tricks with Bonus: Nonrelativistic and Relativistic Kepler Problems},
  author={Jantzen, Robert T.},
  year={2010},
  note={Lecture notes}
}

@book {MR1383587,
    AUTHOR = {Fan, J. and Gijbels, I.},
     TITLE = {Local polynomial modelling and its applications},
    SERIES = {Monographs on Statistics and Applied Probability},
    VOLUME = {66},
 PUBLISHER = {Chapman \& Hall, London},
      YEAR = {1996},
     PAGES = {xvi+341},
      ISBN = {0-412-98321-4},
   MRCLASS = {62G05 (62J02)},
  MRNUMBER = {1383587},
MRREVIEWER = {Theo\ Gasser},
}

@book{Lee_2013,
  title={Introduction to Smooth Manifolds},
  author={Lee, J.},
  isbn={9781441999825},
  lccn={2012945172},
  series={Graduate Texts in Mathematics},
  year={2012},
  publisher={Springer New York}
}

\end{document}